\newtheorem{theorem}{Theorem}[section]
\newtheorem{proposition}[theorem]{Proposition}
\newtheorem{corollary}[theorem]{Corollary}
\newtheorem{example}[theorem]{Example}
\newtheorem{lemma}[theorem]{Lemma}
\newtheorem{remark}[theorem]{Remark}
\newtheorem{problem}[theorem]{Problem}
\newtheorem{question}[theorem]{Question}
\newcommand{\asc}{{\rm asc}}
\newcommand{\Asc}{{\rm Asc}}
\newcommand{\ch}{{\rm ch}}
\newcommand{\des}{{\rm des}}
\newcommand{\Des}{{\rm Des}}
\newcommand{\esd}{{\rm esd}}
\newcommand{\exc}{{\rm exc}}
\newcommand{\fexc}{{\rm fexc}}
\newcommand{\ree}{{\rm e}}
\newcommand{\sDes}{{\rm sDes}}
\newcommand{\supp}{{\rm supp}}
\newcommand{\bB}{{\mathcal B}}
\newcommand{\dD}{{\mathcal D}}
\newcommand{\fF}{{\mathcal F}}
\newcommand{\lL}{{\mathcal L}}
\newcommand{\pP}{{\mathcal P}}
\newcommand{\qQ}{{\mathcal Q}}
\newcommand{\bx}{{\mathbf x}}
\newcommand{\by}{{\mathbf y}}
\newcommand{\RR}{{\mathbb R}}
\newcommand{\fS}{{\mathfrak S}}
\newcommand{\NN}{{\mathbb N}}
\newcommand{\ZZ}{{\mathbb Z}}
\newcommand{\CC}{{\mathbb C}}
\renewcommand{\to}{\rightarrow}
\newcommand{\sm}{{\smallsetminus}}
\begin{document}
\title[Binomial Eulerian polynomials for colored 
permutations]
{Binomial Eulerian polynomials for colored 
permutations}

\author{Christos~A.~Athanasiadis}

\address{Department of Mathematics\\
National and Kapodistrian University of Athens\\
Panepistimioupolis\\
15784 Athens, Greece}
\email{caath@math.uoa.gr}

\date{December 1, 2018; revised on January 20, 2020}
\thanks{ \textit{Key words and phrases}. 
Eulerian polynomial, colored permutation, triangulation,
$h$-polynomial, face ring, local face module, equivariant 
$\gamma$-positivity.}

\begin{abstract}
Binomial Eulerian polynomials first appeared in
work of Postnikov, Reiner and Williams on the face 
enumeration of generalized permutohedra. They are 
$\gamma$-positive (in particular, palindromic and 
unimodal) polynomials which can be interpreted as 
$h$-polynomials of certain flag simplicial polytopes  
and which admit interesting Schur $\gamma$-positive 
symmetric function generalizations. This paper 
introduces analogues of these polynomials for 
$r$-colored permutations with similar properties 
and uncovers some new instances of equivariant 
$\gamma$-positivity in geometric combinatorics.
\end{abstract}

\maketitle

\section{Introduction}
\label{sec:intro}

The Eulerian polynomial
\[ A_n(t) \ := \ \sum_{w \in \fS_n} t^{\des(w)} 
        \ = \ \sum_{w \in \fS_n} t^{\exc(w)}, \]
attached to the symmetric group $\fS_n$ of 
permutations of the set $\{1, 2,\dots,n\}$, 
is a prototypical example of a palindromic and unimodal 
polynomial in combinatorics (for undefined notation 
and terminology, see Sections~\ref{sec:pre}, 
\ref{sec:trian} and~\ref{sec:act}). Its palindromicity 
and unimodality can be demonstrated combinatorially 
and geometrically (as well as by other approaches). 
For instance, these properties follow from an expansion 
of the form 
\[ A_n(t) \ = \sum_{i=0}^{\lfloor (n-1)/2 \rfloor} 
  \gamma_{n,i} \, t^i (1+t)^{n-1-2i} \]
for some nonnegative integers $\gamma_{n,i}$, which
establishes the $\gamma$-positivity of $A_n(t)$, and 
from the interpretation of $A_n(t)$ as the $h$-polynomial 
of the boundary complex of a simplicial polytope. For an 
overview of these and other important properties of 
Eulerian polynomials, see \cite{Pet15} 
\cite[Section~1.4]{StaEC1}.

The $n$th \emph{binomial Eulerian polynomial}, defined 
by the formula
\begin{equation}
\label{eq:defbinomAn}
\widetilde{A}_n(t) \ = \ 1 \, + \, t \, \sum_{m=1}^n 
                          {n \choose m} A_m(t), 
\end{equation}
first appeared in work of Postnikov, Reiner and 
Williams~\cite[Section~10.4]{PRW08} on the face 
enumeration of generalized permutohedra and was further 
studied in~\cite{SW17+} (where the name binomial 
Eulerian polynomial was adopted) and in~\cite{MMY17}.
The following statement, which can be derived from a 
more general result \cite[Theorem~11.6]{PRW08} on the 
$h$-polynomials of chordal nestohedra, shows that 
$\widetilde{A}_n(t)$ shares some of the main properties
of the Eulerian polynomial $A_n(t)$.
\begin{theorem} \label{thm:mainA} 
{\rm (Postnikov--Reiner--Williams~\cite{PRW08})}
The polynomial $\widetilde{A}_n(t)$ is equal to the 
$h$-polynomial of the boundary complex of an 
$n$-dimensional flag simplicial polytope for every 
positive integer $n$. Moreover,
\begin{equation}
\label{eq:binomAngamma}
  \widetilde{A}_n(t) \ = \ 
  \sum_{i=0}^{\lfloor n/2 \rfloor} 
  \widetilde{\gamma}_{n,i} \, t^i (1+t)^{n-2i},
\end{equation} 
where $\widetilde{\gamma}_{n,i}$ is equal to the 
number of permutations $w \in \fS_{n+1}$ which have  
exactly $i$ descents, no two consecutive, and $w(1) 
< w(2) < \cdots < w(m) = n+1$ for some $m$.

In particular, $\widetilde{A}_n(t)$ is 
$\gamma$-positive.
\end{theorem}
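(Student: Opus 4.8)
The plan is to realize $\widetilde{A}_n(t)$ as the $h$-polynomial of a suitable nestohedron and then quote the general $\gamma$-positivity theorem of Postnikov--Reiner--Williams for chordal nestohedra (the ``more general result'' cited above). Let $\mathcal{B}_n$ be the building set on the ground set $[n+1]$ whose elements are the singletons $\{i\}$ together with all subsets containing $n+1$; this is the graphical building set of the star $K_{1,n}$ with centre $n+1$, so the associated nestohedron $P_{\mathcal{B}_n}$ is the ($n$-dimensional, simple) stellohedron. I would first record three facts about $\mathcal{B}_n$: it is connected (the ground set itself belongs to it); its nested set complex $\Delta_n$ --- the dual simplicial complex of $P_{\mathcal{B}_n}$, hence the boundary complex of an $n$-dimensional simplicial polytope --- is flag, which follows from the characterization of flag nestohedra in \cite{PRW08} (equivalently, from the fact that graph-associahedra are flag polytopes); and it is chordal with respect to the natural order $1 < 2 < \cdots < n+1$, since every non-singleton element has the form $\{i_1 < \cdots < i_k < n+1\}$ and each of its ``upper'' subsets $\{i_j,\dots,i_k,n+1\}$ again contains $n+1$, hence lies in $\mathcal{B}_n$.

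Next I would compute $h(\Delta_n,t) = \widetilde{A}_n(t)$. The maximal nested sets of $\mathcal{B}_n$ --- equivalently the $\mathcal{B}_n$-trees, equivalently the vertices of $P_{\mathcal{B}_n}$ --- are transparent: such a tree consists of a directed path $q_1 \to q_2 \to \cdots \to q_{\ell} \to (n+1)$ whose nodes $q_1,\dots,q_{\ell}$ are distinct elements of $[n]$, together with the remaining $n-\ell$ elements of $[n]$ attached as leaves below $n+1$. Using the expression of the $h$-polynomial of a nestohedron as $\sum_{T} t^{\des_{\mathcal{B}_n}(T)}$ over $\mathcal{B}_n$-trees $T$, where a descent is an edge whose parent label exceeds its child label, one gets $\des_{\mathcal{B}_n}(T) = \des(q_1 \cdots q_{\ell}) + (n-\ell)$, and summing over all $T$ yields
\[
  h(\Delta_n,t) \ = \ \sum_{\ell=0}^{n} \binom{n}{\ell}\, t^{\,n-\ell}\, A_{\ell}(t).
\]
A short computation (using the palindromicity of each $A_\ell(t)$, or exponential generating functions) identifies the right-hand side with $1 + t\sum_{m=1}^{n}\binom{n}{m}A_m(t) = \widetilde{A}_n(t)$.

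Now I would invoke \cite[Theorem~11.6]{PRW08}: since $\mathcal{B}_n$ is a connected chordal building set, its $h$-polynomial is $\gamma$-positive, and in fact $h(\Delta_n,t) = \sum_i \gamma_i\, t^i(1+t)^{n-2i}$, where $\gamma_i$ counts the $\mathcal{B}_n$-trees (a distinguished class of vertices of $P_{\mathcal{B}_n}$) whose set of descents contains no two consecutive elements. The remaining step is bookkeeping: translate this class of $\mathcal{B}_n$-trees, under a bijection $\mathcal{B}_n\text{-trees} \leftrightarrow \{w \in \fS_{n+1} : w(1) < w(2) < \cdots < w(m) = n+1 \text{ for some } m\}$ that carries $\des_{\mathcal{B}_n}$ to the ordinary descent number, into exactly the set of permutations described in the statement; this gives $\gamma_i = \widetilde{\gamma}_{n,i}$ and establishes \eqref{eq:binomAngamma}, and together with the flagness recorded above completes the proof. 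In particular, $\widetilde{A}_n(t)$ is $\gamma$-positive.

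I expect the main obstacle to be precisely this translation of the intrinsic combinatorics of \cite{PRW08} --- nested sets, $\mathcal{B}$-trees, the $\mathcal{B}$-descent statistic and the flagness criterion, specialized to $\mathcal{B}_n$ --- into the concrete statements ``$h(\Delta_n,t) = \widetilde{A}_n(t)$'' and ``$\widetilde{\gamma}_{n,i}$ counts permutations of $\{1,\dots,n+1\}$ with $i$ non-consecutive descents whose initial increasing run reaches $n+1$''; the $\gamma$-positivity itself is then a black-box application of Theorem~11.6. A more self-contained route to the ``in particular'' clause alone would apply a modified Foata--Strehl (valley-hopping) action directly to the combinatorial model $\widetilde{A}_n(t) = \sum_w t^{\des(w)}$ over the $w \in \fS_{n+1}$ above; the delicate point there is that the action must be arranged so that letters may migrate into and out of the initial increasing run, and that is where the real work lies.
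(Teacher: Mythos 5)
Your proposal is correct and follows exactly the route the paper indicates: the paper gives no proof of Theorem~\ref{thm:mainA}, stating only that it ``can be derived from a more general result \cite[Theorem~11.6]{PRW08} on the $h$-polynomials of chordal nestohedra,'' and your argument is precisely that derivation, with the supporting details (the star building set on $[n+1]$, its connectedness, flagness and chordality, and the $\mathcal{B}_n$-tree enumeration yielding $\sum_{\ell}\binom{n}{\ell}t^{n-\ell}A_\ell(t)$, which is Equation~(\ref{eq:def2binomAn})) all checking out. The only caveat is that the final identification of $\gamma_i$ with $\widetilde{\gamma}_{n,i}$ still rests on matching the $\mathcal{B}$-permutation statistic of \cite{PRW08} to the stated class of $w\in\fS_{n+1}$, which you correctly flag as the remaining bookkeeping rather than treat as done.
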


This paper introduces a generalization 
$\widetilde{A}_{n,r}(t)$ of $\widetilde{A}_n(t)$ to 
the wreath product group $\ZZ_r \wr \fS_n$ and 
studies further symmetric function generalizations. 
This section gives only a rough outline of the paper; 
for background, precise statements of most of the 
results and technicalities the reader is referred to 
other sections. The polynomial $\widetilde{A}_{n,r}
(t)$ is defined by the formula
\begin{equation}
\label{eq:defbinomAnr}
\widetilde{A}_{n,r}(t) \ = \ \sum_{m=0}^n 
{n \choose m} t^{n-m} A_{m,r}(t), 
\end{equation}
where
\[ A_{n,r}(t) \ := \sum_{w \in \ZZ_r \wr \fS_n} 
                t^{\des(w)} 
           \ = \sum_{w \in \ZZ_r \wr \fS_n} 
                t^{\exc(w)} \]

\medskip
\noindent
is the Eulerian polynomial for $\ZZ_r \wr \fS_n$ 
introduced by Steingr\'imsson \cite{Stei92, Stei94} 
and $A_{0,r}(t) := 1$. Since, by the palindromicity 
of $A_n(t)$ and $\widetilde{A}_n(t)$, Equation 
(\ref{eq:defbinomAn}) may be rewritten as 
\begin{equation}
\label{eq:def2binomAn}
\widetilde{A}_n(t) \ = \ \sum_{m=0}^n {n \choose m} 
                         t^{n-m} A_m(t),
\end{equation}
where $A_0(t) := 1$, the polynomial $\widetilde{A}_{n,r}
(t)$ reduces to $\widetilde{A}_n(t)$ for $r=1$. For 
$r=2$, we prefer to write $\widetilde{B}_n (t)$, instead 
of $\widetilde{A}_{n,2}(t)$, to parallel to standard 
notation $B_n(t)$ for the Eulerian polynomial 
$A_{n,2}(t)$ associated to the hyperoctahedral group 
$\ZZ_2 \wr \fS_n$. For the first few values of $n$, we 
have
\[  \widetilde{B}_n (t) \ = \ \begin{cases}
    1 + 2t, & \text{if $n=1$} \\
    1 + 8t + 4t^2, & \text{if $n=2$} \\
    1 + 26t + 44t^2 + 8t^3, & \text{if $n=3$} \\
    1 + 80t + 328t^2 + 208t^3 + 16t^4, & 
                             \text{if $n=4$} \\
    1 + 242t + 2072t^2 + 3072t^3 + 912t^4 + 32t^5, 
                               & \text{if $n=5$}. 
    \end{cases} \]

The type of palindromic decomposition of 
$\widetilde{A}_{n,r}(t)$ provided by the following 
statement has strong implications. For instance, it 
shows that 
$\widetilde{A}_{n,r}(t)$ is alternatingly increasing, 
in the sense of \cite[Definition~2.9]{SV13}, and 
nonsymmetric $\gamma$-positive, in the sense of 
\cite[Section~5.1]{Ath17}, and hence unimodal, for all 
$n \ge 1$ and $r \ge 2$ and thus it adds another family 
of polynomials in the literature known to have these nice 
properties. Palindromic decompositions of polynomials 
have been considered before in the context of Ehrhart 
theory \cite{Stap09}; see also \cite{BS18} for 
connections to real-rootedness and 
\cite[Sections~10.3-10.4]{BR15}.

\begin{theorem} \label{thm:mainB} 
For all positive integers $n, r$ with $r \ge 2$ 
we have
\begin{equation}
\label{eq:binomAnrsum}
\widetilde{A}_{n,r}(t) \ = \ 
\widetilde{A}^+_{n,r}(t) \, + \, 
\widetilde{A}^-_{n,r}(t),
\end{equation}
where 

\begin{itemize}
\itemsep=0pt

\item
$\widetilde{A}^+_{n,r}(t)$ is a $\gamma$-positive 
polynomial with center $n/2$, and 

\item
$\widetilde{A}^-_{n,r}(t)$ is a $\gamma$-positive 
polynomial with center $(n+1)/2$ and zero constant 
term. 
\end{itemize}

Moreover, $\widetilde{A}^+_{n,r}(t)$ equals the 
$h$-polynomial of the boundary complex of an 
$n$-dimensional flag simplicial polytope.
\end{theorem}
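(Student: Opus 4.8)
The plan is to realize $\widetilde{A}_{n,r}(t)$ as the $h$-polynomial of a triangulation $\Delta$ of an $n$-dimensional ball whose boundary complex $\partial\Delta$ is, combinatorially, the boundary complex of an $n$-dimensional flag simplicial polytope, and then to read the asserted decomposition off the reciprocity for triangulated balls. Granting such a $\Delta$, set $\widetilde{A}^+_{n,r}(t) := h(\partial\Delta,t)$ and $\widetilde{A}^-_{n,r}(t) := \widetilde{A}_{n,r}(t) - \widetilde{A}^+_{n,r}(t)$. Since $\partial\Delta$ is a triangulated $(n-1)$-sphere, $\widetilde{A}^+_{n,r}(t)$ is automatically palindromic with center $n/2$, and by the choice of $\Delta$ it is the $h$-polynomial of the boundary complex of an $n$-dimensional flag simplicial polytope, which is the last sentence of the theorem. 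The reciprocity $h(\Delta,t) = t^{n+1} h(\Delta,1/t) + (1-t)\,h(\partial\Delta,t)$, valid for triangulated $n$-balls, then gives $\widetilde{A}^-_{n,r}(t) = \bigl(t^{n+1}\widetilde{A}_{n,r}(1/t) - t\,\widetilde{A}_{n,r}(t)\bigr)/(1-t)$, which one checks at once is palindromic with center $(n+1)/2$; its constant term vanishes because $\widetilde{A}_{n,r}(0) = 1 = h_0(\partial\Delta)$. Everything is thus reduced to (i) producing $\Delta$ and (ii) establishing $\gamma$-positivity of the two summands.

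For (i), I would construct $\Delta$ as an $r$-colored analogue of the construction behind Theorem~\ref{thm:mainA}. Concretely, start from a unimodular triangulation attached to the groups $\ZZ_r \wr \fS_\bullet$ whose $h$-polynomials are the colored Eulerian polynomials $A_{m,r}(t)$, and pass to $\widetilde{A}_{n,r}(t)$ by successive stellar subdivisions at edges, arranged so that the links of the subdivided faces carry the $A_{m,r}(t)$ and summing over the $\binom{n}{m}$ choices of surviving coordinates reproduces the defining formula (\ref{eq:defbinomAnr}) on the level of $h$-polynomials. Flagness of $\partial\Delta$ is preserved at each step, since stellar subdivision at an edge sends a flag complex to a flag complex, and polytopality is maintained because each stellar subdivision is induced by a projective transformation of a polytope; alternatively, one recognises $\partial\Delta$ directly as the boundary of a flag generalized permutohedron of ``stellohedron type'' attached to $\ZZ_r \wr \fS_n$. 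Verifying $h(\Delta,t) = \widetilde{A}_{n,r}(t)$ is the bookkeeping heart of this step, and it is best organised through Stanley's local $h$-polynomial formula for subdivisions, which at the same time sets up the argument for (ii).

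For (ii), I would produce explicit combinatorial formulas for the $\gamma$-coefficients of $\widetilde{A}^+_{n,r}(t)$ and $\widetilde{A}^-_{n,r}(t)$, in the spirit of the Postnikov--Reiner--Williams count in Theorem~\ref{thm:mainA}: as numbers of elements of $\ZZ_r \wr \fS_{n+1}$ with prescribed (signed) descent data having no two consecutive descents. The input is a known expansion of $A_{m,r}(t)$, and of its symmetric decomposition, in the $\gamma$-basis; substituting into $\widetilde{A}_{n,r}(t) = \sum_m \binom{n}{m} t^{n-m} A_{m,r}(t)$ and reorganising the binomial sum --- most transparently at the level of the exponential generating function $e^{tz}\sum_m A_{m,r}(t)\,z^m/m!$, or via a modified Foata--Strehl type group action on colored permutations --- separates the contributions cleanly into the center-$n/2$ and center-$(n+1)/2$ pieces, with manifestly nonnegative coefficients. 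The whole argument is cleanest when lifted to the Schur-function-valued (Frobenius characteristic) refinement advertised in the abstract, the numerical statement following by specialisation. The genuinely hard part, I expect, is step (i): the reorganisation in step (ii) is essentially dictated by the already-understood case $r=1$, whereas for general $r$ there is no ready-made polytope, so one must do real work to exhibit $\Delta$ and its flag boundary --- or, short of a direct construction, invoke a realizability result for the relevant family of generalized permutohedra to secure the theorem's final assertion.
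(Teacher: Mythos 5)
Your overall architecture differs from the paper's in a way that leaves both halves of the argument unestablished. The paper does not realize $\widetilde{A}_{n,r}(t)$ as the $h$-polynomial of a ball; it proves the $\gamma$-positivity statement purely combinatorially (Section~\ref{sec:mainB}) and, separately, realizes only $\widetilde{A}^+_{n,r}(t)$ as $h(\Delta(\Gamma_{n,r}),t)$, where $\Gamma_{n,r}$ is the $r$-fold edgewise subdivision of the barycentric subdivision of the simplex and $\Delta(\cdot)$ is the explicit sphere construction of Proposition~\ref{prop:S(G)}. Your step (i) is the load-bearing part of your plan and is not carried out: "successive stellar subdivisions at edges arranged so that the links carry the $A_{m,r}(t)$" and "a flag generalized permutohedron of stellohedron type attached to $\ZZ_r\wr\fS_n$" do not specify an object, and for $r\ge 2$ no such nestohedron is available off the shelf — this is exactly why the paper builds $\Delta(\Gamma_{n,r})$ from edgewise subdivisions instead. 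You acknowledge this, but without the ball (or some sphere) the reciprocity-based definition of $\widetilde{A}^{\pm}_{n,r}$ and the final polytopality assertion are both unsupported.

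Step (ii) as written also fails on a concrete point. Substituting a $\gamma$-expansion of $A_{m,r}(t)$ (or of its symmetric decomposition) into $\widetilde{A}_{n,r}(t)=\sum_m\binom{n}{m}t^{n-m}A_{m,r}(t)$ does not "separate cleanly": the term $t^{n-m}\,t^i(1+t)^{m-2i}$ is centered at $n-m/2$, which depends on $m$, so the binomial sum is not a $\gamma$-expansion about $n/2$. The missing ingredient is the second binomial transform through the colored derangement polynomials: combining $A_{m,r}(t)=\sum_k\binom{m}{k}d_{k,r}(t)$ with the defining formula gives
\begin{equation*}
\widetilde{A}_{n,r}(t)\ =\ \sum_{k=0}^n\binom{n}{k}\,d_{k,r}(t)\,(1+t)^{n-k},
\end{equation*}
and it is the factor $(1+t)^{n-k}$ (not $t^{n-m}$) that preserves centers; the $\gamma$-positive symmetric decomposition $d_{k,r}=d^+_{k,r}+d^-_{k,r}$ of Theorem~\ref{thm:Ath14} then transfers verbatim to $\widetilde{A}_{n,r}$ (this is Lemma~\ref{lem:fgh} and Corollary~\ref{cor:mainB}). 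Your EGF hint would lead you there if you also used $\sum_m A_{m,r}(t)z^m/m!=e^{z}\sum_k d_{k,r}(t)z^k/k!$, but as stated your input ("a known $\gamma$-expansion of $A_{m,r}$ and its symmetric decomposition") is the wrong level of the hierarchy and the reorganization would not close.
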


For instance, for $r=2$, writing $\widetilde{B}^+_n 
(t)$ and $\widetilde{B}^-_n (t)$ instead of 
$\widetilde{A}^+_{n,2}(t)$ and $\widetilde{A}^-_{n,2}
(t)$, we have

\[  \widetilde{B}^+_n (t) \ = \ \begin{cases}
    1 + t, & \text{if $n=1$} \\
    1 + 5t + t^2, & \text{if $n=2$} \\
    1 + 19t + 19t^2 + t^3, & \text{if $n=3$} \\
    1 + 65t + 185t^2 + 65t^3 + t^4, & 
                            \text{if $n=4$} \\
    1 + 211t + 1371t^2 + 1371t^3 + 211t^4 + t^5, 
                           & \text{if $n=5$} 
    \end{cases}  \]
and
\[  \widetilde{B}^-_n (t) \ = \ \begin{cases}
    t, & \text{if $n=1$} \\
    3t + 3t^2, & \text{if $n=2$} \\
    7t + 25t^2 + 7t^3, & \text{if $n=3$} \\
    15t + 143t^2 + 143t^3 + 15t^4, & 
                         \text{if $n=4$} \\
    31t + 701t^2 + 1701t^3 + 701t^4 + 31t^5, 
                          & \text{if $n=5$}. 
    \end{cases} \]

\medskip
The Eulerian polynomial $A_n(t)$ affords an 
$\fS_n$-equivariant analogue, meaning a graded 
$\fS_n$-representation $\varphi_n = \oplus_{j=0}^{n-1} 
\, \varphi_{n,j}$ such that $\sum_{j=0}^{n-1} \dim 
(\varphi_{n,j}) t^j = A_n(t)$, which arises very 
often in mathematics: most importantly, in the 
contexts of equivariant cohomology of toric varieties 
\cite{Pro90} \cite[p.~529]{Sta89} \cite{Ste92}, group 
actions on face rings of simplicial complexes 
\cite{Ste92} and the homology of posets \cite{SW09}, 
equivariant Ehrhart theory \cite[Section~9]{Stap11}, 
as well as in purely enumerative contexts; see 
\cite{SW16} for an overview. The representation 
$\varphi_n$ can also be determined by the generating 
function formula 
\begin{equation} \label{eq:genMn}
  1 \, + \, \sum_{n \ge 1} z^n \,
  \sum_{j=0}^{n-1} \ch (\varphi_{n,j}) (\bx) t^j \ = 
  \ \frac{(1-t) H(\bx; z)}{H(\bx; tz) - tH(\bx; z)},
\end{equation}
where $\ch$ stands for Frobenius characteristic and
$H(\bx; z) = \sum_{n \ge 0} h_n(\bx) z^n$ is the 
generating function for the complete homogeneous
symmetric functions in $\bx = (x_1, x_2,\dots)$. An
$\fS_n$-equivariant analogue $\widetilde{\varphi}_n = 
\oplus_{j=0}^n \, \widetilde{\varphi}_{n,j}$ of the 
binomial Eulerian polynomial $\widetilde{A}_n(t)$, 
satisfying 
\begin{equation} \label{eq:genTMn}
  1 \, + \, \sum_{n \ge 1} z^n \, \sum_{j=0}^n 
  \ch (\widetilde{\varphi}_{n,j}) (\bx) t^j \ = \ 
  \frac{(1-t) H(\bx; z) H(\bx; tz)} 
  {H(\bx; tz) - tH(\bx; z)},
\end{equation}
was found by Shareshian and Wachs~\cite{SW17+}, 
who computed the graded $\fS_n$-representation on 
the cohomology of the toric variety associated to 
the polytope mentioned in Theorem~\ref{thm:mainA},
namely the $n$-dimensional simplicial stellohedron. 
As shown in \cite[Section~3]{SW17+}, the coefficients 
of $z^n$ in the right-hand sides of 
Equations~(\ref{eq:genMn}) and~(\ref{eq:genTMn})
are Schur $\gamma$-positive polynomials in $t$, 
meaning that
\begin{eqnarray} \label{eq:phi-n}
\sum_{j=0}^{n-1} \ch (\varphi_{n,j}) (\bx) t^j
& = & \sum_{i=0}^{\lfloor (n-1)/2 \rfloor} 
\gamma_{n,i} (\bx) \, t^i (1+t)^{n-1-2i} \\
& & \nonumber \\ \label{eq:tphi-n}
\sum_{j=0}^n \ch (\widetilde{\varphi}_{n,j}) (\bx) 
t^j & = & \sum_{i=0}^{\lfloor n/2 \rfloor} 
\widetilde{\gamma}_{n,i} (\bx) \, t^i (1+t)^{n-2i} 
\end{eqnarray}
for some Schur-positive symmetric functions 
$\gamma_{n,i} (\bx)$ and $\widetilde{\gamma}_{n,i} 
(\bx)$. These results constitute two of only few 
instances of equivariant $\gamma$-positivity (and more 
specifically, in this case, of the equivariant Gal 
phenomenon introduced in \cite[Section~5]{SW17+}) 
that exist in the literature. This paper contributes 
a few more. For instance, we provide an 
$\fS_n$-equivariant analogue
$\widetilde{\varphi}_{n,r}^+ = \oplus_{j=0}^n \, 
\widetilde{\varphi}_{n,r,j}^+$ of 
$\widetilde{A}^+_{n,r}(t)$, satisfying
\begin{equation} \label{eq:genTMnr}
  1 \, + \, \sum_{n \ge 1} z^n \, \sum_{j=0}^n 
  \ch (\widetilde{\varphi}_{n,r,j}^+) (\bx) t^j \ = 
	\ \frac{H(\bx; z) H(\bx; tz)^r - tH(\bx; z)^r 
	H(\bx; tz)} {H(\bx; tz)^r - tH(\bx; z)^r},
\end{equation}
interpret it as the graded $\fS_n$-representation on 
the cohomology of the toric variety associated to 
the simplicial polytope mentioned in 
Theorem~\ref{thm:mainB} and prove its equivariant 
$\gamma$-positivity for $r = 2$.

The content and other results of this paper may be 
summarized as follows. Section~\ref{sec:pre} includes 
preliminaries on colored permutations, symmetric 
functions and $\gamma$-positivity and fixes 
notation. The first part of Theorem~\ref{thm:mainB} 
is derived in Section~\ref{sec:mainB} from the 
$\gamma$-positivity of the derangement polynomials 
for $r$-colored permutations, established 
in~\cite{Ath14}. The proof yields a combinatorial 
interpretation of the corresponding 
$\gamma$-coefficients (Corollary~\ref{cor:mainB}) 
which generalizes the one provided by 
Theorem~\ref{thm:mainA} in the case $r=1$.

The second statement in Theorem~\ref{thm:mainB} is 
proven in Section~\ref{sec:be}, after the relevant 
background on simplicial complexes and their face 
enumeration is explained in Section~\ref{sec:trian}. 
The proof is based on a natural construction 
\cite{Ath12} of a flag triangulation $\Delta(\Gamma)$ 
of the sphere, which contains as a subcomplex a given 
flag triangulation $\Gamma$ of the simplex of the same 
dimension. This construction is applied in 
Section~\ref{sec:be} to the refinement of the 
barycentric subdivision of the simplex which was 
introduced in \cite{Ath14} to provide a 
partial geometric interpretation for the $r$-colored 
derangement polynomial. The simplicial polytope 
constructed for $r=1$, in which case $\Gamma$ is the 
barycentric subdivision of the $(n-1)$-dimensional 
simplex itself, turns out to be combinatorially 
isomorphic to the $n$-dimensional simplicial 
stellohedron, considered in~\cite{PRW08} (see 
Example~\ref{ex:h}). 

The remainder of this paper focuses on equivariant 
analogues of polynomials discussed so far. 
Section~\ref{sec:act} reviews standard 
ways in which equivariant analogues of polynomials 
of combinatorial significance arise in the 
literature, namely from group 
actions on simplicial complexes, fans, posets and 
lattice points in polyhedra. 
Sections~\ref{sec:localgal} and~\ref{sec:gal} provide 
$\fS_n$-equivariant analogues of $r$-colored 
derangement and binomial Eulerian polynomials, 
partially interpret them in terms 
of $\fS_n$-actions on face rings of triangulations 
of spheres or local face modules of triangulations 
of simplices and partially prove their (nonsymmetric)
equivariant $\gamma$-positivity. Thus, some new 
instances of the equivariant Gal phenomenon 
\cite[Section~5]{SW17+} and its local version 
\cite[Section~5.2]{Ath17} \cite[Section~5]{Ath18} are
uncovered.
The methods employed in these sections extend those 
of \cite[Section~5]{Ath18} and \cite{Ste94}. A 
combinatorial interpretation of the 
$\gamma$-coefficients of $\widetilde{B}^+_n (t)$ 
and $\widetilde{B}^-_n (t)$ which is different 
from the one given in Section~\ref{sec:mainB} is 
deduced via specialization 
(Proposition~\ref{prop:mainC}). 
Section~\ref{sec:eAnr} discusses an 
$\fS_n$-equivariant analogue of the Eulerian 
polynomial $A_{n,r}(t)$. 

\section{Preliminaries}
\label{sec:pre}

This section provides key definitions, fixes notation 
and recalls a few useful facts about group representations 
and symmetric and quasisymmetric functions which will be 
used frequently. We will write $[a, b] = \{a, a+1,\dots,b\}$ 
for integers $a \le b$ and set $[n] := [1, n]$. 

\medskip
\noindent
\textbf{Colored permutations.}
The wreath product group $\ZZ_r \wr \fS_n$ consists of 
all permutations $w$ of $[n] \times [0, r-1]$
such that $w(a, 0) = (b, j) \Rightarrow w(a, i) = 
(b,i+j)$, where $i+j$ is computed modulo $r$ and the 
product of $\ZZ_r \wr \fS_n$ is composition of permutations. 
The elements of this group are represented as 
$r$-colored permutations of $[n]$, meaning pairs $(\sigma, 
\varepsilon)$ where $\sigma = (\sigma(1), 
\sigma(2),\dots,\sigma(n)) \in \fS_n$, $\varepsilon = 
(\varepsilon_1, \varepsilon_2,\dots,\varepsilon_n) \in 
[0,r-1]^n$ and $\varepsilon_k$ is thought of as the color 
assigned to $\sigma(k)$.

Given an $r$-colored permutation $w = (\sigma, 
\varepsilon)$, as above, we say that an index $k \in 
[n]$ is a \emph{descent} of $w$ if either $\varepsilon_k 
> \varepsilon_{k+1}$, or $\varepsilon_k = 
\varepsilon_{k+1}$ and $\sigma(k) > \sigma(k+1)$, where 
$\sigma(n+1) := n+1$ and $\varepsilon_{n+1} := 0$ (in 
particular, $n$ is a descent of $w$ if and only if 
$\sigma(n)$ has nonzero color). We denote by $\Des(w)$ 
the set of descents of $w$ and define the set of 
\emph{ascents} $\Asc(w)$ as the complement of $\Des(w)$ 
in $[n]$. An \emph{excedance} of $w$ is an index $k \in 
[n]$ such that either $\sigma(k) > k$, or $\sigma(k) = k$ 
and $\varepsilon_k$ is nonzero. The number of ascents, 
descents and excedances of $w$, respectively, will be 
denoted by $\asc(w)$, $\des(w)$ and $\exc(w)$. The 
\emph{flag excedance} number is defined as $\fexc(w) = 
r \cdot \exc_A(w) + \varepsilon_1 + \varepsilon_2 + \cdots 
+ \varepsilon_n$, where $\exc_A(w)$ is the number of 
indices $k \in [n]$ such that $\sigma(k) > k$ and 
$\varepsilon_k = 0$. More information, for instance 
about the generating functions of these statistics over 
$\ZZ \wr \fS_n$, and references can be found in 
\cite[Section~2]{Ath14}.

\medskip
\noindent
\textbf{Symmetric functions.} 
Our notation generally follows that in \cite{Mac95} 
\cite[Chapter~7]{StaEC2}. We denote by $\Lambda(x)$ the 
$\CC$-algebra of symmetric functions in a sequence 
$\bx = (x_1, x_2,\dots)$ of commuting independent 
indeterminates. We set 
\begin{eqnarray*}
E(\bx; z) & := & \sum_{n \ge 0} e_n(\bx) z^n \ = \ 
  \prod_{i \ge 1} \, (1 + x_i z), \\
H(\bx; z) & := & \sum_{n \ge 0} h_n(\bx) z^n \ = \ 
  \prod_{i \ge 1} \, \frac{1}{1 - x_i z},
\end{eqnarray*}
where	$e_n(\bx)$ and $h_n(\bx)$ are the elementary and 
complete homogeneous, respectively, symmetric functions 
of degree $n$ in $\bx$. We denote by 
$s_\lambda(\bx)$ the Schur function associated to a 
partition $\lambda$ and call a symmetric function 
\emph{Schur-positive}, if it can be written as a 
nonnegative integer linear combination of Schur 
functions.

Given a (complex, finite-dimensional) 
$\fS_n$-representation $\varphi$ with character $\chi$, 
the \emph{Frobenius characteristic} of $\varphi$ is 
defined by the formula
\[ \ch(\varphi)(\bx) \ = \ \frac{1}{n!} \, 
   \sum_{w \in \fS_n} \chi(w) p_w(\bx), \]
where $p_w(\bx) = p_\mu (\bx)$ for every permutation 
$w \in \fS_n$ of cycle type $\mu \vdash n$ and $p_\mu
(\bx)$ is a power sum symmetric function. We refer to
\cite[Section~7.18]{StaEC2} for a detailed discussion 
of the important properties of this map and mention 
that if $\varphi$ is non-virtual, then $\ch(\varphi)
(\bx)$ is a Schur-positive homogeneous symmetric 
function of degree $n$ which determines $\varphi$ up
to isomorphism. The map $\ch$ has a natural 
generalization to the wreath product group $\ZZ_r \wr 
\fS_n$ (see \cite[Section~I.B.6]{Mac95}), which we
denote here by $\ch_r$. The image under $\ch_r$ of 
any non-virtual $(\ZZ_r \wr \fS_n)$-representation 
$\varphi$ is \emph{Schur-positive} in $\Lambda(\bx^{(0)}) 
\otimes \Lambda (\bx^{(1)}) \otimes \cdots \otimes 
\Lambda(\bx^{(r-1)})$, meaning a nonnegative linear 
combination of products of Schur functions in the 
sequences of indeterminates $\bx^{(0)}, 
\bx^{(1)},\dots,\bx^{(r-1)}$ of total degree $n$, 
which determines $\varphi$ up to isomorphism. 

We recall that the \emph{fundamental quasisymmetric 
function} in $\bx$ of degree $n$, associated to $S 
\subseteq [n-1]$, is defined as
\[ F_{n, S} (\bx) \ = \ 
   \sum_{\scriptsize \begin{array}{c} i_1 \le i_2 
   \le \cdots \le i_n \\ j \in S \Rightarrow i_j 
   < i_{j+1} \end{array}} 
   x_{i_1} x_{i_2} \cdots x_{i_n}. \]
For a $\CC$-linear combination $f(\bx)$ of such 
functions, we will denote by ${\rm ex}^\ast (f)$ the 
evaluation of $f(1-q, (1-q)q, (1-q)q^2,\dots)$ at $q=1$ 
(this is a specialization of the $q$-analogue of the 
exponential specialization, discussed in 
\cite[Section~7.8]{StaEC2}). We then have ${\rm ex}^\ast 
(F_{n, S} (\bx)) = 1/n!$ for every $S \subseteq [n-1]$. 
This implies that 

\begin{itemize}
\itemsep=0pt

\item
${\rm ex}^\ast (e_n (\bx)) = {\rm ex}^\ast (h_n (\bx)) 
= 1/n!$ for every $n \in \NN$ and, more generally, 

\item
${\rm ex}^\ast (s_\lambda (\bx)) = f^\lambda/n!$ for 
every $\lambda \vdash n$, where $f^\lambda$ is the number
of standard Young tableaux of shape $\lambda$. 
\end{itemize}
As a result, ${\rm ex}^\ast (\ch(\varphi)(\bx))
= \dim(\varphi)/n!$ for every $\fS_n$-representation 
$\varphi$ and, by extending ${\rm ex}^\ast$ to formal 
power series with quasisymmetric functions as 
coefficients, ${\rm ex}^\ast (E(\bx; z)) = {\rm ex}^\ast 
(H(\bx; z)) = e^z$.   

\medskip
\noindent
\textbf{Polynomials and equivariant analogues.} 
We generally use the term `palindromicity', instead of 
`symmetry', for polynomials to avoid confusion with the 
notion of symmetric function. Thus, a polynomial $f(t) = 
\sum_j a_j t^j \in \NN[t]$ is called

\begin{itemize}
\item[$\bullet$] 
  \emph{palindromic}, with center (of symmetry) $n/2$, 
	if $a_j = a_{n-j}$ for all $j \in \ZZ$,
\item[$\bullet$] 
  \emph{unimodal}, if $a_0 \le a_1 \le \cdots \le a_k \ge 
	 a_{k+1} \ge \cdots$ for some $k \in \NN$,
\item[$\bullet$] 
  \emph{alternatingly increasing}, if $a_0 \le a_n \le 
	a_1 \le a_{n-1} \le \cdots \le a_{\lfloor (n+1)/2 
	\rfloor}$ for some $n \in \NN$ and $a_k = 0$ for $k>n$,
\item[$\bullet$] 
  \emph{$\gamma$-positive}, if 
	  \begin{equation} \label{eq:def-gamma}
      f(t) \ = \ \sum_{i=0}^{\lfloor n/2 \rfloor} \gamma_i 
			           t^i (1+t)^{n-2i} 
    \end{equation}
for some $n \in \NN$ and nonnegative integers $\gamma_0, 
\gamma_1,\dots,\gamma_{\lfloor n/2 \rfloor}$.
\end{itemize}

Gamma-positivity implies palindromicity and unimodality
and, in fact, it has developed into a powerful method 
to prove these properties; see \cite{Ath17} and
\cite[Chapter~4]{Pet15}. There are versions of this 
notion for nonpalindromic polynomials. We say that a 
nonzero polynomial $f(t) \in \NN[t]$ has \emph{center} 
$(a+b)/2$, where $a$ (respectively, $b$) is the smallest 
(respectively, largest) integer $k$ for which the 
coefficient of $t^k$ in $f(t)$ is nonzero. Then, $f(t)$ 
has a unique decomposition as a sum of two palindromic 
polynomials with centers of symmetry $n/2$ and $(n+1)/2$ 
(respectively, $n/2$ and $(n-1)/2$), where $n/2$ is the 
center of $f(t)$. Following \cite[Section~5.1]{Ath17}, we 
call $f(t)$ \emph{right $\gamma$-positive} (respectively, 
\emph{left $\gamma$-positive}) if these palindromic 
polynomials are $\gamma$-positive. Right (respectively, 
left) $\gamma$-positivity implies 
that $f(t)$ (respectively, $tf(t)$) is alternatingly 
increasing and, in particular, that $f(t)$ is unimodal.
For instance, the colored binomial Eulerian and colored 
derangement polynomials which appear in 
Theorems~\ref{thm:mainB} and~\ref{thm:Ath14} are right 
$\gamma$-positive and left $\gamma$-positive, respectively.

Given a finite group $G$,	a graded (non-virtual) 
$G$-representation $\varphi = \oplus_{j \ge 0} \, 
\varphi_j$ is called a \emph{$G$-equivariant analogue} 
of $f(t)$ if $\dim(\varphi_j) = a_j$ for every $j$. To 
extend the notions just introduced for polynomials to 
their $G$-equivariant analogues, it is convenient to 
consider the polynomial $\varphi(t) = \sum_j \varphi_j 
t^j$ instead and work with polynomials in $t$ whose 
coefficients are in the representation ring $R(G)$, i.e.,
formal integer linear combinations of (isomorphism 
classes of) irreducible $G$-representations. Then, one
simply replaces the usual total order on $\ZZ$ by the 
partial order $\le_G$ on $R(G)$ defined by setting 
$\varphi \le_G \psi$ if $\psi - \varphi$ is equal to 
a nonnegative integer linear combination of irreducible
$G$-representations (so that nonnegativity of integers 
is replaced by the property that elements of $R(G)$ 
are non-virtual $G$-representations). For instance, 
$\varphi(t)$ is \emph{$\gamma$-positive} if it can be 
expressed as in the right-hand side 
of~(\ref{eq:def-gamma}) for some $n \in \NN$ 
and non-virtual $G$-representations 
$\gamma_0, \gamma_1,\dots,\gamma_{\lfloor n/2 \rfloor}$.
For example, let $\varphi(t) = \varphi_0 + (\varphi_0 + 
\varphi_1)t + \varphi_0 t^2$, where $\varphi_0$ and 
$\varphi_1$ are nonisomorphic irreducible 
$G$-representations. Then, $\varphi(t)$ is palindromic 
and unimodal but not $\gamma$-positive, since $\varphi(t) = 
\gamma_0 (1+t)^2 + \gamma_1 t$ with $\gamma_0 = 
\varphi_0$ and $\gamma_1 = \varphi_1 - \varphi_0$.

For $G = \ZZ_r \wr \fS_n$, according to our previous 
discussion of $\ch_r$, the polynomial $\varphi(t) = 
\sum_j \varphi_j t^j \in R(G)[t]$ is $\gamma$-positive 
if and only if $\sum_j \ch_r(\varphi_j) t^j$ is \emph{Schur 
$\gamma$-positive}, meaning that
\[ \sum_j \ch_r(\varphi_j) t^j \ = \ \sum_{i=0}^{\lfloor 
   n/2 \rfloor} g_i \, t^i (1+t)^{n-2i} \]
for some $n \in \NN$ and Schur-positive functions 
$g_i \in \Lambda(\bx^{(0)}) \otimes \Lambda (\bx^{(1)}) 
\otimes \cdots \otimes \Lambda(\bx^{(r-1)})$. 

\section{Gamma-positivity of binomial Eulerian 
         polynomials}
\label{sec:mainB}

This section derives the $\gamma$-positivity 
statement of Theorem~\ref{thm:mainB} from a 
similar property of the derangement polynomials 
for $r$-colored permutations, proven 
in~\cite{Ath14}, and provides an interpretation 
for the corresponding $\gamma$-coefficients. The 
following lemma will be crucial.

\begin{lemma} \label{lem:fgh} 
For $n \in \NN$, let $f_n(t), g_n(t), h_n(t) \in 
\NN(t)$ be polynomials such that
\begin{eqnarray}
h_n(t) & = & \sum_{m=0}^n {n \choose m} t^{n-m}
g_m(t) \label{eq:gh} \\
& & \nonumber \\ 
g_m(t) & = & \sum_{k=0}^m {m \choose k} f_k(t)
\label{eq:fg}
\end{eqnarray}
for all $m, n \in \NN$. If 
\begin{equation}
\label{eq:fngamma}
  f_n(t) \ = \ \sum_{i=0}^{\lfloor n/2 \rfloor} 
  \xi_{n,i} \, t^i (1+t)^{n-2i}
\end{equation} 
for every $n \in \NN$, then 
\begin{equation}
\label{eq:hngamma}
  h_n(t) \ = \ \sum_{i=0}^{\lfloor n/2 \rfloor} 
  \left( \, \sum_{k=2i}^n {n \choose k} \xi_{k,i} 
  \right) t^i (1+t)^{n-2i}
\end{equation} 
for every $n \in \NN$. 

In particular, if $f_n(t)$ is palindromic (or 
$\gamma$-positive), with center $n/2$,
for every $n \in \NN$, then $h_n(t)$ has the same 
property for every $n \in \NN$.
\end{lemma}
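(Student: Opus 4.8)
The plan is to substitute the $\gamma$-expansion \eqref{eq:fngamma} of each $f_k(t)$ into the relations \eqref{eq:fg} and \eqref{eq:gh} and then reorganize the resulting double (respectively triple) sum so that the powers $t^i(1+t)^{n-2i}$ are collected. First I would plug \eqref{eq:fngamma} into \eqref{eq:fg}: this gives
\[
  g_m(t) \ = \ \sum_{k=0}^m \binom{m}{k} \sum_{i=0}^{\lfloor k/2 \rfloor}
  \xi_{k,i}\, t^i (1+t)^{k-2i}.
\]
The obstacle here is that the exponent $k-2i$ of $(1+t)$ still depends on the summation index $k$, so this is not yet a clean $\gamma$-expansion of $g_m$ with a uniform power $(1+t)^{m-2i}$. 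To fix this I would not try to rewrite $g_m$ itself in $\gamma$-form; instead I would carry the expression forward into \eqref{eq:gh} and handle both binomial convolutions simultaneously.

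Substituting the above into \eqref{eq:gh} yields
\[
  h_n(t) \ = \ \sum_{m=0}^n \binom{n}{m} t^{n-m}
  \sum_{k=0}^m \binom{m}{k} \sum_{i=0}^{\lfloor k/2 \rfloor}
  \xi_{k,i}\, t^i (1+t)^{k-2i}.
\]
Now I would interchange the order of summation to bring $k$ (and then $i$) to the outside, writing the inner sum over $m$ with $k \le m \le n$. The key combinatorial identity to invoke is the Vandermonde-type telescoping
\[
  \sum_{m=k}^n \binom{n}{m}\binom{m}{k} t^{n-m} (1+t)^{k-2i}
  \ = \ \binom{n}{k} (1+t)^{k-2i} \sum_{m=k}^n \binom{n-k}{m-k} t^{n-m}
  \ = \ \binom{n}{k} (1+t)^{n-2i},
\]
using $\binom{n}{m}\binom{m}{k} = \binom{n}{k}\binom{n-k}{m-k}$ and the binomial theorem $\sum_{j=0}^{n-k}\binom{n-k}{j} t^{j} = (1+t)^{n-k}$, which combines with the existing $(1+t)^{k-2i}$ to give $(1+t)^{n-2i}$. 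This is precisely the mechanism that absorbs the $t^{n-m}$ factor in \eqref{eq:gh} and upgrades the $k$-dependent power into the uniform power $(1+t)^{n-2i}$; I expect verifying this absorption to be the main (though routine) step. After it, $h_n(t)$ becomes $\sum_i \bigl(\sum_{k \ge 2i}^n \binom{n}{k}\xi_{k,i}\bigr) t^i (1+t)^{n-2i}$, with the lower limit $k \ge 2i$ coming from the constraint $i \le \lfloor k/2 \rfloor$, which is exactly \eqref{eq:hngamma}.

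For the final assertion: if each $f_n(t)$ is $\gamma$-positive with center $n/2$, then every $\xi_{k,i} \ge 0$, hence every coefficient $\sum_{k=2i}^n \binom{n}{k}\xi_{k,i}$ in \eqref{eq:hngamma} is nonnegative, so $h_n(t)$ is $\gamma$-positive with center $n/2$; and palindromicity with center $n/2$ is immediate from the $\gamma$-expansion (or, if one only assumes $f_n$ palindromic, one first writes the palindromic $f_n(t)$ in the basis $\{t^i(1+t)^{n-2i}\}$ with possibly negative $\xi_{n,i}$, runs the same computation, and concludes $h_n(t)$ is palindromic with center $n/2$ because it too lies in the span of that basis). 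Since the derivation above is an identity in $\NN(t)$ (or $\mathbb{Q}(t)$) valid coefficientwise, no positivity is needed for the identity \eqref{eq:hngamma} itself — positivity is only used at the very end to transfer $\gamma$-positivity.
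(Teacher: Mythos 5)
Your proposal is correct and follows essentially the same route as the paper: both rest on the identity $\binom{n}{m}\binom{m}{k}=\binom{n}{k}\binom{n-k}{m-k}$ together with the binomial theorem to collapse the double convolution into $\binom{n}{k}(1+t)^{n-k}$, after which substituting the $\gamma$-expansion of $f_k$ and reindexing gives \eqref{eq:hngamma}. The only (immaterial) difference is that the paper first derives $h_n(t)=\sum_{k=0}^n\binom{n}{k}f_k(t)(1+t)^{n-k}$ and then substitutes \eqref{eq:fngamma}, whereas you substitute first and collapse afterwards.
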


\begin{proof}
Combining Equations~(\ref{eq:gh}) 
and~(\ref{eq:fg}) and changing the order of 
summation, we get
\begin{eqnarray*}
h_n(t) & = & \sum_{m=0}^n {n \choose m} t^{n-m}
\sum_{k=0}^m {m \choose k} f_k(t) \ = \ 
\sum_{k=0}^n f_k(t) \sum_{m=k}^n {n \choose m} 
{m \choose k} t^{n-m} \\
& & \\ 
& = & \sum_{k=0}^n f_k(t) \sum_{m=k}^n 
{n \choose k} {n-k \choose m-k} t^{n-m} \ = \ 
\sum_{k=0}^n {n \choose k} f_k(t) (1+t)^{n-k}. 
\end{eqnarray*}
Replacing $f_k(t)$ by the expression provided by 
Equation~(\ref{eq:fngamma}) and changing the 
order of summation again yields 
(\ref{eq:hngamma}) and the proof follows.
\end{proof}

The derangement polynomial for the group $\ZZ_r 
\wr \fS_n$, introduced and studied on this level 
of generality by Chow and Mansour 
\cite[Section~3]{CM10}, can be defined by either 
one of the formulas
\begin{eqnarray*}
d_{n,r}(t) & = & \sum_{w \in \dD_{n,r}} t^{\exc(w)} 
\\ & & \\ 
& = & \sum_{k=0}^n \, (-1)^{n-k} {n \choose k}
A_{k,r}(t),
\end{eqnarray*}
where $\dD_{n,r}$ is the set of derangements 
(colored permutations with no fixed point of zero 
color) in $\ZZ_r \wr \fS_n$. By the principle of 
inclusion-exclusion, we have
\begin{equation} \label{eq:Anrdnr}
A_{n,r}(t) \ = \ \sum_{k=0}^n {n \choose k} 
d_{k,r}(t)
\end{equation}
for all $n \in \NN$, where $d_{0,r}(t) := 1$. For 
later use (see Section~\ref{sec:localgal}), we also
recall the formula (\cite[Theorem~5~(iv)]{CM10}) 
\begin{equation} \label{eq:expdnr}
\sum_{n \ge 0} d_{n,r} (t) \, \frac{z^n}{n!} \ =
\ \frac{(1-t) e^{(r-1)tz}} {e^{rtz} - te^{rz}}. 
\end{equation}

The following statement is one of the main results 
of~\cite{Ath14}; it reduces to 
\cite[Theorem~2.13]{Ath17} for $r=1$.

\begin{theorem} \label{thm:Ath14} 
{\rm (\cite[Theorem~1.3]{Ath14})} 
For all positive integers $n,r$ we have
\begin{equation}
\label{eq:dnrsum}
d_{n,r}(t) \ = \ d^+_{n,r}(t) \, + \, d^-_{n,r}(t),
\end{equation}
where
\begin{eqnarray*}
d^+_{n,r}(t) & = & \sum_{i=1}^{\lfloor n/2 \rfloor} 
\xi^+_{n,r,i} \, t^i (1+t)^{n-2i} \\
& & \\ 
d^-_{n,r}(t) & = & \sum_{i=1}^{\lfloor (n+1)/2 
\rfloor} \xi^-_{n,r,i} \, t^i (1+t)^{n+1-2i} 
\end{eqnarray*}
and 

\begin{itemize}
\itemsep=0pt
\item[$\bullet$] 
$\xi^+_{n,r,i}$ is equal to the number of colored
permutations $w \in \ZZ_r \wr \fS_n$ for which 
$\Asc(w) \subseteq [2,n]$ has exactly $i$ elements, 
no two consecutive, and contains $n$, 
\item[$\bullet$] 
$\xi^-_{n,r,i}$ is equal to the number of colored
permutations $w \in \ZZ_r \wr \fS_n$ for which 
$\Asc(w) \subseteq [2,n-1]$ has exactly $i-1$ 
elements, no two consecutive.
\end{itemize}
\end{theorem}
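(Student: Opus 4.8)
The plan is to prove Theorem~\ref{thm:Ath14} by taking the two displayed formulas for $\xi^+_{n,r,i}$ and $\xi^-_{n,r,i}$ as the \emph{definitions} of the polynomials $d^+_{n,r}(t)$ and $d^-_{n,r}(t)$ and then establishing the single polynomial identity
\[
d^+_{n,r}(t)\,+\,d^-_{n,r}(t)\ =\ d_{n,r}(t).
\]
Everything else then follows immediately from the shape of the two sides: $t^i(1+t)^{n-2i}$ is palindromic with center $n/2$ and $t^i(1+t)^{n+1-2i}$ with center $(n+1)/2$, so $d^+_{n,r}(t)$ and $d^-_{n,r}(t)$ are $\gamma$-positive with the asserted centers; $d^-_{n,r}(t)$ has zero constant term; and $\xi^+_{n,r,i}$ vanishes for $i=0$ since a colored permutation counted by $\xi^+_{n,r,i}$ has $n\in\Asc(w)$, whence $|\Asc(w)|\ge1$.

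The identity would be proved by a valley-hopping (modified Foata--Strehl) argument, extending the $r=1$ case \cite[Theorem~2.13]{Ath17}. The right-hand side is read off the ascending-run structure of colored permutations: a colored permutation $w\in\ZZ_r\wr\fS_n$ whose ascent set has no two consecutive elements and omits $1$ factors, in one-line notation, into maximal ascending runs of length one or two, with the number of length-two runs equal to $|\Asc(w)|$, and the alternative ``$\sigma(n)$ has zero color'' (equivalently $n\in\Asc(w)$, $\Asc(w)\subseteq[2,n]$) versus ``$\sigma(n)$ has nonzero color'' (equivalently $\Asc(w)\subseteq[2,n-1]$) is what separates the $\xi^+$- from the $\xi^-$-configurations. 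Expanding each $t^i(1+t)^m$ reconstructs a full valley-hopping orbit from its distinguished element, so that $d^+_{n,r}(t)+d^-_{n,r}(t)=\sum_w t^{\exc(w)}$ over a valley-hopping-closed set $\eE_{n,r}$ of colored permutations, each orbit of which contributes $t^i(1+t)^{n-2i}$ or $t^{i+1}(1+t)^{n-1-2i}$ according to its type; the remaining task is to produce an excedance-preserving bijection $\dD_{n,r}\to\eE_{n,r}$ (or to verify directly, via inclusion--exclusion from $A_{n,r}(t)=\sum_k\binom{n}{k}d_{k,r}(t)$ in~(\ref{eq:Anrdnr}), that $\sum_{w\in\eE_{n,r}}t^{\exc(w)}=d_{n,r}(t)$), which yields the identity.

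The step I expect to be the main obstacle is arranging the valley-hopping so that it is at once compatible with the \emph{colored} descent/excedance statistic at the boundary positions $1$ and $n$, identifies the correct domain $\eE_{n,r}$, and --- above all --- produces the asymmetry that is absent for $r=1$: the nonzero colors effectively supply one extra toggling position at the right end, which is precisely what promotes the single palindromic $\gamma$-positive derangement polynomial of \cite{Ath17} to the sum of a polynomial of ``length'' $n$ and center $n/2$ with one of ``length'' $n+1$ and center $(n+1)/2$; making the count of free toggling positions come out as $n-2i$ in one case and $n+1-2i$ in the other is the delicate point. As an independent check on the identity one can compute $\sum_{n\ge0}\bigl(d^+_{n,r}(t)+d^-_{n,r}(t)\bigr)z^n/n!$ from the run decomposition above by standard generating-function methods and confirm that it agrees with the known expression $\tfrac{(1-t)e^{(r-1)tz}}{e^{rtz}-te^{rz}}$ of~(\ref{eq:expdnr}).
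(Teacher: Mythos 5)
The paper does not actually prove this statement: Theorem~\ref{thm:Ath14} is quoted verbatim from \cite[Theorem~1.3]{Ath14}, so there is no in-paper argument to compare yours against. Judged on its own, your proposal has a genuine gap. Its entire content reduces to the claim that a ``valley-hopping (modified Foata--Strehl) argument'' will produce the identity $d^+_{n,r}(t)+d^-_{n,r}(t)=d_{n,r}(t)$, but the construction that would carry this claim is never given: you do not define the hopping action on $r$-colored permutations, do not specify the set $\eE_{n,r}$ or the orbit representatives, and do not verify that the excedance statistic changes by exactly one under each hop while respecting the color conventions at positions $1$ and $n$. Indeed you explicitly flag precisely this --- making the number of free toggling positions come out as $n-2i$ in one family of orbits and $n+1-2i$ in the other --- as ``the main obstacle'' and ``the delicate point.'' That is the theorem; everything surrounding it (palindromicity of $t^i(1+t)^m$, vanishing of the constant term, the generating-function consistency check against~(\ref{eq:expdnr})) is routine and does not substitute for it. A consistency check of exponential generating functions also cannot close the gap by itself, because it would only confirm the sum $d^+_{n,r}(t)+d^-_{n,r}(t)$, not the separate combinatorial identities for $d^+_{n,r}(t)$ and $d^-_{n,r}(t)$ individually, which is what the theorem asserts.

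For orientation: the actual proof in \cite{Ath14} does not use valley hopping at all. It identifies $d_{n,r}(t)$ with the dimension of the homology $\widetilde{H}_{n-1}((\bB_{n,r}\sm\{\varnothing\})\ast T_{t,n-1};\CC)$ of a Rees product of the colored Boolean lattice with a tree, and then invokes the decomposition of \cite[Corollary~3.8]{LSW12}; the descriptions of $\xi^{\pm}_{n,r,i}$ in terms of ascent sets with no two consecutive elements contained in $[2,n]$ or $[2,n-1]$ are exactly what that machinery outputs. This is the same mechanism that the present paper upgrades equivariantly via Theorem~\ref{thm:Athrees} in Sections~\ref{sec:localgal} and~\ref{sec:gal}. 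If you want to pursue the valley-hopping route instead, you must actually construct a color-compatible hopping action and prove its orbit structure; as written, your argument assumes the conclusion of the hardest step.
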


Setting 
\begin{eqnarray}
A^+_{n,r}(t) & := & \sum_{k=0}^n {n \choose k} 
d^+_{k,r}(t) \label{eq:Adnr+} \\
& & \nonumber \\ 
A^-_{n,r}(t) & := & \sum_{k=0}^n {n \choose k} 
d^-_{k,r}(t), \label{eq:Adnr-} 
\end{eqnarray}
where $d^+_{0,r}(t) := 1$ and $d^-_{0,r}(t) := 0$, 
and in view of Equations~(\ref{eq:Anrdnr}) 
and~(\ref{eq:dnrsum}), we have
\begin{equation}
\label{eq:Anrsum}
A_{n,r}(t) \ = \ A^+_{n,r}(t) \, + \, A^-_{n,r}(t).
\end{equation}
Then (\ref{eq:binomAnrsum}) holds, with
\begin{eqnarray}
\widetilde{A}^+_{n,r}(t) & := & \sum_{m=0}^n 
{n \choose m} t^{n-m} A^+_{m,r}(t), 
\label{eq:defbinomAnr+} \\
& & \nonumber \\ 
\widetilde{A}^-_{n,r}(t) & := & \sum_{m=0}^n 
{n \choose m} t^{n-m} A^-_{m,r}(t). 
\label{eq:defbinomAnr-} 
\end{eqnarray}
As a result, the following corollary proves 
the first statement of Theorem~\ref{thm:mainB} 
(note also that it generalizes the interpretation 
for $\widetilde{\gamma}_{n,i}$ given in 
Theorem~\ref{thm:mainA}). 
\begin{corollary} \label{cor:mainB} 
For all positive integers $n,r$ with $r \ge 2$, 
we have
\begin{eqnarray*}
\widetilde{A}^+_{n,r}(t) & = &  
\sum_{i=0}^{\lfloor n/2 \rfloor} 
\widetilde{\gamma}^+_{n,r,i} \, t^i (1+t)^{n-2i} \\
& & \\ 
\widetilde{A}^-_{n,r}(t) & = &  
\sum_{i=1}^{\lfloor (n+1)/2 \rfloor} 
\widetilde{\gamma}^-_{n,r,i} \, t^i (1+t)^{n+1-2i}, 
\end{eqnarray*}
where
\begin{eqnarray}
\widetilde{\gamma}^+_{n,r,i} & := &  
\sum_{k=2i}^n {n \choose k} \xi^+_{k,r,i}  
\label{eq:gamma+formula} \\
& & \nonumber \\ 
\widetilde{\gamma}^-_{n,r,i} & := &  
\sum_{k=2i}^n {n \choose k} \xi^-_{k,r,i}.  
\label{eq:gamma-formula} 
\end{eqnarray}
As a result:

\begin{itemize}
\itemsep=0pt
\item[$\bullet$] 
$\widetilde{\gamma}^+_{n,r,i}$ is equal to the 
number of $w \in \ZZ_r \wr \fS_{n+1}$ for which 
$\Asc(w)$ has exactly $i+1$ elements, no two 
consecutive, and contains $n$ and $w(1) > w(2) > 
\cdots > w(m) = 1$ have all zero color, for some 
$m$,
\item[$\bullet$] 
$\widetilde{\gamma}^-_{n,r,i}$ is equal to the 
number of $w \in \ZZ_r \wr \fS_{n+1}$ for which 
$\Asc(w) \subseteq [n-1]$ has exactly $i$ elements, 
no two consecutive, and $w(1) > w(2) > \cdots > w(m) 
= 1$ have all zero color, for some $m$.
\end{itemize}
\end{corollary}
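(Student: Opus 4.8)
The plan is to deduce the two $\gamma$-type decompositions of $\widetilde{A}^+_{n,r}(t)$ and $\widetilde{A}^-_{n,r}(t)$ from Theorem~\ref{thm:Ath14} via Lemma~\ref{lem:fgh}, and then to read off the combinatorial meaning of the coefficients $\widetilde{\gamma}^+_{n,r,i}$ and $\widetilde{\gamma}^-_{n,r,i}$ through an explicit bijection. For the decompositions, I would apply Lemma~\ref{lem:fgh} with $f_k(t) := d^+_{k,r}(t)$, $g_k(t) := A^+_{k,r}(t)$ and $h_k(t) := \widetilde{A}^+_{k,r}(t)$: Equations (\ref{eq:Adnr+}) and (\ref{eq:defbinomAnr+}) are exactly hypotheses (\ref{eq:fg}) and (\ref{eq:gh}), while Theorem~\ref{thm:Ath14} supplies hypothesis (\ref{eq:fngamma}) with $\xi_{k,i} = \xi^+_{k,r,i}$ for $k \ge 1$ (and $\xi_{0,0} = 1$, all other $\xi_{k,0}$ vanishing). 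Conclusion (\ref{eq:hngamma}) of the lemma is then the asserted expansion of $\widetilde{A}^+_{n,r}(t)$, with coefficients given by (\ref{eq:gamma+formula}) and with constant term $1$. For the minus part one cannot invoke Lemma~\ref{lem:fgh} literally, since $d^-_{k,r}(t)$ has center $(k+1)/2$ rather than $k/2$; but the short computation inside the proof of that lemma shows $\widetilde{A}^-_{n,r}(t) = \sum_{k=0}^n \binom{n}{k} (1+t)^{n-k} d^-_{k,r}(t)$ regardless of the centering, and inserting $d^-_{k,r}(t) = \sum_i \xi^-_{k,r,i}\, t^i (1+t)^{k+1-2i}$ and collecting the powers of $1+t$ (whose exponent stabilizes to $n+1-2i$, independently of $k$) yields the stated expansion of $\widetilde{A}^-_{n,r}(t)$ together with (\ref{eq:gamma-formula}).

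For the combinatorial interpretation I would exhibit a bijection realizing the right-hand sides of (\ref{eq:gamma+formula}) and (\ref{eq:gamma-formula}) directly. Given $w = (\sigma, \varepsilon) \in \ZZ_r \wr \fS_{n+1}$ that admits an initial run $w(1) > w(2) > \cdots > w(m) = 1$ of zero-colored letters, necessarily $m = \sigma^{-1}(1)$, and that run is determined by its value set $P := \{\sigma(1),\dots,\sigma(m)\}$ (a subset of $[n+1]$ containing $1$), since those values must appear in decreasing order with color $0$. Thus $w$ is equivalent to a pair $(T, u)$, where $T := [n+1] \setminus P \subseteq [2,n+1]$ has $k := n+1-m$ elements and $u = (\tau,\delta) \in \ZZ_r \wr \fS_k$ is the standardization (order-isomorphic relabeling of values, colors untouched) of the word occupying positions $m+1,\dots,n+1$ of $w$. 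Since $T$ ranges over all $k$-subsets of the $n$-element set $[2,n+1]$, this pairing accounts for the factor $\binom{n}{k}$; the only remaining task is to match the constraints on $\Asc(w)$ with those defining $\xi^+_{k,r,i}$ and $\xi^-_{k,r,i}$.

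To that end I would verify the following about ascents: positions $1,\dots,m-1$ are descents of $w$; position $m$ is always an ascent of $w$ (because $\sigma(m) = 1$ carries color $0$ and is smaller than $\sigma(m+1)$, or, when $m=n+1$, than the boundary value $n+2$); and, for $j \in [k]$, the index $m+j$ is an ascent of $w$ iff $j$ is an ascent of $u$ — the case $j = k$ requiring a comparison of position $n+1$ of $w$ against the boundary letter $n+2$ and of position $k$ of $u$ against its boundary letter $k+1$, both of which reduce, since standardization preserves colors, to the condition $\varepsilon_{n+1} = 0$. Hence $\Asc(w) = \{m\} \cup \{m+j : j \in \Asc(u)\}$, and since $1 \notin \Asc(u)$ is equivalent to $m+1 \notin \Asc(w)$, the conditions imposed on $\Asc(w)$ in the ``$+$'' clause of the corollary translate precisely into the conditions defining $\xi^+_{k,r,i}$ in Theorem~\ref{thm:Ath14} (cardinality $i$, contained in $[2,k]$, no two consecutive, containing $k$); summing over $k$ and $T$ produces (\ref{eq:gamma+formula}). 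The ``$-$'' case runs in parallel: the corresponding constraint on $\Asc(w)$ forces $\Asc(u) \subseteq [2,k-1]$, while the forced ascent at $m$ gives $|\Asc(w)| = |\Asc(u)| + 1$, so cardinality $i$ on the $w$-side corresponds to cardinality $i-1$ on the $u$-side, matching $\xi^-_{k,r,i}$.

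The main obstacle is the bookkeeping in the previous paragraph: tracking the two ``virtual'' boundary letters ($n+2$ for $w$, $k+1$ for $u$), confirming that the ``no two consecutive'' property and the cardinality shift transfer faithfully under standardization, and dealing cleanly with the degenerate cases $m=1$ (empty decreasing run above $1$, i.e.\ $\sigma(1)=1$) and $k=0$ (the all-decreasing, all-zero-colored permutation, which is the unique contribution to $\widetilde{\gamma}^+_{n,r,0}=1$). Once these are pinned down, everything else is formal.
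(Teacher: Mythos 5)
Your proposal is correct and follows essentially the same route as the paper: Lemma~\ref{lem:fgh} (with the obvious modification for the shifted center in the minus case, which the paper also leaves implicit) applied to Theorem~\ref{thm:Ath14}, followed by the prefix-removal/standardization bijection that the paper compresses into the phrase ``using the interpretations for $\xi^{\pm}_{n,r,i}$''. One caveat: carried out as you describe, your bijection yields $n+1 \in \Asc(w)$ in the plus case and $n+1 \notin \Asc(w)$ (equivalently $\Asc(w) \subseteq [n]$) in the minus case, and forces the lower summation limit $k=2i-1$ in (\ref{eq:gamma-formula}) since $\xi^-_{k,r,i}$ is nonzero already for $k=2i-1$; these differ from the printed bullets (``contains $n$'', ``$\subseteq [n-1]$'', ``$k=2i$'') by off-by-one slips in the statement itself (as a check, $\widetilde{\gamma}^{-}_{1,2,1}=1$ comes entirely from $k=1=2i-1$), so your assertion that the conditions ``translate precisely'' should be replaced by the corrected conditions rather than matched against the text as printed.
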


\begin{proof} 
The first part follows from 
Lemma~\ref{lem:fgh}, where the role of $f_n(t)$  
is played by $d^+_{n,r}(t)$ and $d^-_{n,r}(t)$, 
respectively, and Theorem~\ref{thm:Ath14}. The 
second follows by rewriting 
Equations~(\ref{eq:gamma+formula}) 
and~(\ref{eq:gamma-formula}) as 
\begin{eqnarray*}
\widetilde{\gamma}^+_{n,r,i} & = &  
\sum_{k=0}^{n-2i} {n \choose k} \xi^+_{n-k,r,i}  
\\ & & \\ 
\widetilde{\gamma}^-_{n,r,i} & = &  
\sum_{k=0}^{n-2i} {n \choose k} \xi^-_{n-k,r,i} 
\end{eqnarray*}
and using the interpretations for 
$\xi^+_{n,r,i}$ and $\xi^{-}_{n,r,i}$ provided by 
Theorem~\ref{thm:Ath14}.   
\end{proof}

\section{Triangulations}
\label{sec:trian}

This section recalls some definitions and constructions 
about simplicial complexes and their triangulations which 
are needed to complete the proof of Theorem~\ref{thm:mainB}. 
Familiarity with basic notions, such as the correspondence 
between abstract and geometric simplicial complexes, will 
be assumed (detailed expositions can be found in \cite{Bj95, 
DRS10, StaCCA}). All simplicial complexes considered here 
will be finite. We will denote by $\|K\|$ the polyhedron 
(union of all simplices) of a geometric simplicial complex 
$K$ and by $|V|$ and $2^V$ the cardinality and power set, 
respectively, of a finite set $V$. 

Consider two geometric simplicial complexes $K'$ and $K$ 
in some Euclidean space $\RR^N$, with corresponding 
abstract simplicial complexes $\Delta'$ and $\Delta$. 
Then, $K'$ is a triangulation of $K$, and $\Delta'$ 
is a triangulation of $\Delta$, if (a) every simplex of 
$K'$ is contained in some simplex of $K$; and (b) $\|K'\| 
= \|K\|$. Given a simplex $L \in K$ with 
corresponding face $F \in \Delta$, the triangulation 
$K'$ naturally restricts to a triangulation $K'_L$ of 
$L$. The subcomplex $\Delta'_F$ of $\Delta'$ 
corresponding to $K'_L$ is a triangulation of the 
abstract simplex $2^F$, called the \emph{restriction} 
of $\Delta'$ to $F$. We will call an abstract 
triangulation of a sphere (respectively, simplex) 
\emph{regular} if it can be realized geometrically as
the boundary complex (respectively, the complex of 
lower faces) of a simplicial polytope of one dimension 
higher.

A simplicial complex $K$ is called \emph{pure} 
if all its facets (faces which are maximal with 
respect to inclusion) have the same dimension 
and \emph{flag} if it contains every simplex whose 
one-dimensional skeleton is a subcomplex of $K$.
A pure simplicial complex $K$ is called 
\emph{shellable} if its facets can be linearly 
ordered, so that the intersection of any facet $L$, 
other than the first, with the union of all preceding 
ones is equal to the union of (one or more) codimension 
one faces of $L$. 

The enumerative invariants of simplicial 
complexes which will be of importance here 
are the $h$-polynomial of a triangulation 
of a sphere \cite[Chapter~II]{StaCCA} and 
the local $h$-polynomial of a triangulation 
of a simplex \cite{Sta92} 
\cite[Section~III.10]{StaCCA}. The 
\emph{$h$-polynomial} of an 
$(n-1)$-dimensional abstract simplicial 
complex $\Delta$ is defined as
\begin{equation} \label{eq:defh}
  h(\Delta, t) \ = \ \sum_{i=0}^n \, f_{i-1} 
  (\Delta) \, t^i (1-t)^{n-i},
\end{equation}
where $f_i (\Delta)$ is the number of 
$i$-dimensional faces of $\Delta$. Given a 
triangulation $\Gamma$ of an 
$(n-1)$-dimensional simplex $2^V$, the 
\emph{local $h$-polynomial} of $\Gamma$ (with
respect to $V$) is defined 
\cite[Definition~2.1]{Sta92} by the formula 
\begin{equation} \label{eq:deflocalh}
  \ell_V (\Gamma, t) \ = \sum_{F \subseteq V} 
  \, (-1)^{n - |F|} \, h (\Gamma_F, t),
\end{equation}
where $\Gamma_F$ is the restriction of $\Gamma$ 
to the face $F \in 2^V$. By the principle of 
inclusion-exclusion, we have 
\begin{equation} \label{eq:h-localh}
  h (\Gamma, t) \ = \sum_{F \subseteq V} 
  \ell_F (\Gamma_F, t).
\end{equation}

The polynomials (\ref{eq:defh}) and
(\ref{eq:deflocalh}) have especially attractive 
properties when $\Gamma$ and $\Delta$ triangulate 
an $(n-1)$-dimensional simplex or sphere, 
respectively \cite{Sta92} 
\cite[Chapter~III]{StaCCA}. For instance, they 
both have nonnegative and palindromic coefficients, 
with center $n/2$. Moreover, they 
are unimodal if $\Gamma$ and $\Delta$, respectively,
are regular triangulations and are conjectured to be 
$\gamma$-positive \cite[Conjecture~5.4]{Ath12} 
\cite[Conjecture~2.1.7]{Ga05} (see also 
\cite[Section~3]{Ath17}) when the triangulations 
are assumed to be flag.

\bigskip
\noindent
\textbf{The complex $\Delta(\Gamma)$.} 
Every triangulation of a simplex can be extended to 
a triangulation of a sphere of the same dimension in 
a way which preserves important properties, such as 
flagness and regularity. This construction, which we 
now recall, was exploited in~\cite{Ath12}.

Let $V = \{v_1, v_2,\dots,v_n\}$ be an $n$-element 
set and $\Gamma$ be a triangulation of the simplex 
$2^V$. Pick an $n$-element set $U = \{u_1, 
u_2,\dots,u_n\}$ which is disjoint from the vertex 
set of $\Gamma$ and denote by $\Delta(\Gamma)$ the 
collection of sets of the form $E \cup G$, where 
$E = \{ u_i: i \in I\}$ is a face of the simplex 
$2^U$ for some $I \subseteq [n]$ and $G$ is a face 
of the restriction $\Gamma_F$ of $\Gamma$ to the 
face $F = \{ v_i: i \in [n] \sm I\}$ of the simplex 
$2^V$ which is complementary to $E$. Clearly, 
$\Delta(\Gamma)$ is a simplicial complex which 
contains $2^U$ and $\Gamma$ as subcomplexes. When 
$\Gamma = 2^V$ is the trivial triangulation, the 
complex $\Delta(\Gamma)$ is combinatorially isomorphic  
to the boundary complex of the $n$-dimensional 
cross-polytope, defined as the convex hull of the 
set of unit coordinate vectors in $\RR^n$ and their 
negatives. Part of the following statement 
appeared in a more general setting 
in~\cite[Section~4]{Ath12}.
\begin{proposition} \label{prop:S(G)} 
The simplicial complex $\Delta(\Gamma)$ 
triangulates an $(n-1)$-dimensional sphere 
for every triangulation $\Gamma$ of the 
$(n-1)$-dimensional simplex $2^V$. Moreover:

\begin{itemize}
\itemsep=0pt
\item[{\rm (a)}]
if $\Gamma$ is a flag complex, then so is 
$\Delta(\Gamma)$,

\item[{\rm (b)}]
if $\Gamma$ is a regular triangulation, then so is 
$\Delta(\Gamma)$, 

\item[{\rm (c)}]
if all restrictions of $\Gamma$ 
to the faces of $2^V$ are shellable, then so is
$\Delta(\Gamma)$,

\item[{\rm (d)}]
\begin{equation} \label{eq:S(G)hpoly}
h(\Delta(\Gamma), t) \ = \ \sum_{F \subseteq V} 
t^{n-|F|} \, h(\Gamma_F, t).
\end{equation}
\end{itemize}
\end{proposition}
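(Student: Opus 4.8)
The plan is to verify the four claims essentially in the order they are stated, with the bulk of the work being the identification of $\Delta(\Gamma)$ as a genuine triangulation of a sphere and the verification of the $h$-polynomial formula in part (d). First I would handle the basic claim. The key observation is that $\Delta(\Gamma)$ carries a natural ``join-like'' structure relative to the decomposition $[n] = I \sqcup ([n] \sm I)$: a face $E \cup G$ with $E = \{u_i : i \in I\}$ is exactly a face of the join $2^E * \Gamma_F$ where $F = \{v_i : i \in [n]\sm I\}$. Geometrically, one realizes $U$ and the vertex set of $\Gamma$ so that $\|\Delta(\Gamma)\|$ is obtained from the boundary of the cross-polytope on the pairs $\{u_i, v_i\}$ by replacing, on the ``positive orthant'' face $2^V$, the trivial triangulation by $\Gamma$ and, more generally, replacing each coordinate face $2^F$ by $\Gamma_F$. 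Since these replacements are compatible on overlaps (the restriction of $\Gamma_F$ to a subface $F'$ is $\Gamma_{F'}$), the pieces glue to a triangulation of the $(n-1)$-sphere $\partial(\text{cross-polytope})$. The main obstacle here is making the gluing rigorous: one wants a clean argument that the link of each face behaves correctly, i.e. that $\Delta(\Gamma)$ is a pseudomanifold without boundary whose geometric realization is a sphere; the cleanest route is to cite or adapt the more general construction of~\cite[Section~4]{Ath12}, where $\Delta(\Gamma)$ is obtained by a sequence of stellar-type operations (or as a ``pulling'' construction) on the cross-polytope boundary.

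For part (a), suppose $\Gamma$ is flag and let $W$ be a set of vertices of $\Delta(\Gamma)$ that is pairwise joined by edges; I would show $W \in \Delta(\Gamma)$. Write $W = E \cup W'$ where $E \subseteq U$ and $W'$ lies in the vertex set of $\Gamma$. An edge between $u_i$ and any vertex $w$ of $\Gamma$ lies in $\Delta(\Gamma)$ precisely when $w$ is a vertex of $\Gamma_{F}$ for $F$ not containing $v_i$; from all such pairwise conditions one deduces that $W'$ is contained in the vertex set of a single restriction $\Gamma_F$ with $F = \{v_i : i \in [n]\sm I\}$, and that $W'$ is pairwise-edge-connected inside $\Gamma_F$. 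Since $\Gamma_F$ is a restriction of the flag complex $\Gamma$, it is itself flag, so $W' \in \Gamma_F$, and then $E \cup W' \in \Delta(\Gamma)$ by definition. The only subtlety is checking that ``no forbidden edge'' forces all of $W'$ into one common $\Gamma_F$ rather than scattered restrictions; this follows because two vertices $v_i, v_j$-adjacent data already pins down membership, and pairwise conditions over $W$ intersect to a single $F$.

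For (b) and (c), I would proceed by induction on $n$ using the recursive/inductive description of $\Delta(\Gamma)$. For regularity (b): if $\Gamma$ is regular, realized as the lower faces of a polytope, one can choose a height function on the vertices of $\Delta(\Gamma)$ by combining a generic convex function inducing $\Gamma$ on the $v$-side with an appropriate function on the $u$-side (pushing the $u_i$ down far enough), exactly as in the cross-polytope case; the compatibility of $\Gamma$'s heights with those of its restrictions $\Gamma_F$ is what makes this work, and again the general statement is available from~\cite{Ath12}. For shellability (c): given shellings of all the restrictions $\Gamma_F$, one builds a shelling of $\Delta(\Gamma)$ by ordering facets first according to the subset $I$ (equivalently $F$) — say in an order refining reverse inclusion of $F$, mimicking a shelling of the cross-polytope boundary — and within each block $2^E * \Gamma_F$ according to the given shelling of $\Gamma_F$; one then checks the shelling condition on the intersection of each new facet with the union of earlier ones, the new contributions being codimension-one faces coming either from the shelling step inside $\Gamma_F$ or from the ``flip'' to a neighboring $I$. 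Finally, for (d), I would combine the defining formula~(\ref{eq:defh}) for $h(\Delta(\Gamma),t)$ with the face count coming from the decomposition $\Delta(\Gamma) = \bigsqcup_{I \subseteq [n]} \{E \cup G : G \in \Gamma_F\}$: grouping faces by $I$ gives $f$-vector contributions that factor, and a short manipulation converting each block's $f$-to-$h$ transform produces $\sum_{F \subseteq V} t^{n-|F|} h(\Gamma_F,t)$; the cleanest packaging is to note that the $f$-polynomial of $\Delta(\Gamma)$ is $\sum_{F}(1+\text{stuff})$ and apply~(\ref{eq:defh}) term by term, the $t^{n-|F|}$ factor being exactly the $h$-contribution of the simplex $2^E$ on the $|I| = n - |F|$ added $u$-vertices. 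I expect the sphere/gluing step to be the genuine obstacle; parts (a)–(d) are then either formal or reduce to bookkeeping once the structural description of $\Delta(\Gamma)$ is in hand.
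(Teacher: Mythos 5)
Your parts (a), (c) and (d) are sound, and two of them take routes genuinely different from the paper's. For (a) the paper simply cites \cite[Proposition~4.6]{Ath12}, whereas you give a direct verification; your argument works because the carrier of a vertex $w$ of $\Gamma$ being contained in $V \sm \{v_i\}$ for every $i \in I$ forces $W'$ into the single restriction $\Gamma_F$, which is an induced (hence flag) subcomplex of $\Gamma$. For (c) your block ordering (facets grouped by $I$, blocks ordered compatibly with inclusion of $I$, each block shelled by the given shelling of $2^E \ast \Gamma_F$) is exactly the paper's construction. For (d) the paper applies \cite[Equation~(4-2)]{Ath12} to the refinement $\Delta(\Gamma)$ of $\Delta(2^V)$, uses the vanishing of local $h$-polynomials of cones, and then unwinds~(\ref{eq:deflocalh}) by inclusion--exclusion; your direct $f$-vector computation is more elementary and correct: partitioning faces $S$ by $E = S \cap U$ gives $\sum_S t^{|S|} = \sum_{F} t^{\,n-|F|}\sum_{G \in \Gamma_F} t^{|G|}$ (up to the substitution $t \mapsto t/(1-t)$), and~(\ref{eq:defh}) then yields~(\ref{eq:S(G)hpoly}) in one line. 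One small conceptual slip: the factor $t^{n-|F|}$ is not the ``$h$-contribution of the simplex $2^E$'' (whose $h$-polynomial is $1$); it arises because only the full set $E$, and not its proper subsets, is paired with $\Gamma_F$ in the partition. The subsets of $E$ are accounted for under larger faces $F'$.

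The genuine gap is part (b). Realizing $\Delta(\Gamma)$ as the boundary complex of a simplicial polytope is not a matter of choosing a height function ``pushing the $u_i$ down far enough'': a lifting of the vertices of $\Gamma$ that induces $\Gamma$ on the positive facet must be extended coherently over all $2^n$ facets of the cross-polytope simultaneously, and nothing in your sketch guarantees that the resulting convex position induces precisely the join structure $2^E \ast \Gamma_F$ on each of them. Your fallback of citing \cite{Ath12} does not close this, since only part of the proposition appears there; the paper supplies a separate argument for (b), which runs as follows. Since $\Gamma$ is regular, \cite[Lemma~4.3.5]{DRS10} extends it to \emph{some} regular triangulation of the boundary of the cross-polytope; one then observes that, because that boundary contains no segment joining a unit coordinate vector to its negative, there is only \emph{one} triangulation of the boundary on the given vertex set extending $\Gamma$, namely $\Delta(\Gamma)$ itself, so the two must coincide. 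This extension-plus-uniqueness argument sidesteps the explicit construction of a height function entirely, and some such idea is needed to make (b) rigorous.
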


\begin{proof} 
The first statement holds because
$\Delta(\Gamma)$ naturally triangulates 
$\Delta(2^V)$. Part (a) follows from 
\cite[Proposition~4.6]{Ath12} (iii). For part 
(b), consider a geometric realization, say $K'$,
of $\Delta(\Gamma)$ in which the elements of $V$ 
are realized by the unit coordinate vectors in 
$\RR^n$ and those of $U$ by their negatives, and 
let $K$ be the subcomplex of $K'$ which realizes 
$\Gamma$. Since $\Gamma$ is regular, Lemma~4.3.5 
in~\cite{DRS10} guarantees that it can be 
extended to a regular triangulation of 
$\Delta(2^V)$, which is realized by the boundary 
complex of the $n$-dimensional 
cross-polytope. On the other hand, since the 
boundary of this polytope does not contain any 
line segment joining a unit coordinate vector 
to its negative, $K'$ is the only triangulation 
of its boundary complex extending $K$ and the 
proof follows. 

To prove part (c), assume that all restrictions of 
$\Gamma$ to the $2^n$ faces of $2^V$ are shellable. 
Then, the same holds for the restrictions of 
$\Delta(\Gamma)$ to the $2^n$ facets of $\Delta(2^V)$, 
since every such restriction is the simplicial 
join of the restriction of $\Gamma$ to a face $F \in 
2^V$ with the subsimplex of $2^U$ which is 
complementary to $F$ and taking the simplicial join
with a simplex obviously preserves shellability. We 
partition the set of facets 
of $\Delta(\Gamma)$ into $2^n$ blocks, according to
the facet of $\Delta(2^V)$ in which each facet of 
$\Delta(\Gamma)$ lies, and linearly order the $2^n$ 
facets of $\Delta(2^V)$ in any way which extends the 
inclusion order on their intersections with 
$U$. This gives a linear order on the $2^n$ blocks, 
which we may extend to a total order of 
all facets of $\Delta(\Gamma)$ by choosing a linear
order for the facets within each block which is a 
shelling order for the corresponding restriction of
$\Delta(\Gamma)$. We leave it to the reader to verify 
that this is indeed a shelling order for 
$\Delta(\Gamma)$. 

To prove 
part (d), we apply \cite[Equation~(4-2)]{Ath12} 
to the triangulation $\Delta(\Gamma)$ of 
$\Delta(2^V)$. Taking into account the fact 
that cones of triangulations of simplices have the
zero polynomial as their local $h$-polynomials 
\cite[p.~821]{Sta92}, we get
\[ h(\Delta(\Gamma), t) \ = \ \sum_{F \subseteq V} 
\ell_F (\Gamma_F, t) (1+t)^{n-|F|}. \]
Using the defining Equation~(\ref{eq:deflocalh}),
we conclude that
\begin{eqnarray*}
h(\Delta(\Gamma), t) & = & \sum_{F \subseteq V} 
\left( \, \sum_{G \subseteq F} \, (-1)^{|F \sm G|} 
\, h (\Gamma_G, t) \right) (1+t)^{n-|F|} \\ & & \\
& = & \sum_{G \subseteq V} h (\Gamma_G, x) \left(
\, \sum_{G \subseteq F \subseteq V} \, 
(-1)^{|F \sm G|} \, (1+t)^{n-|F|} \right) \\ 
& & \\ &=& \sum_{G \subseteq V} t^{n-|G|} \, 
h(\Gamma_G, t)
\end{eqnarray*}
and the proof follows.
\end{proof} 

\begin{example} \label{ex:h} \rm
Let $\Gamma_n$ be the (first) barycentric subdivision
of $2^V$, consisting of all chains of nonempty 
subsets of $V$. Then, $h(\Gamma_n,t) = A_n(t)$ (see,
for instance, \cite[Theorem~9.1]{Pet15}) and $\Gamma_n$ 
restricts to the barycentric subdivision of $2^F$ for 
every face $F \in 2^V$. Therefore, from 
Equations~(\ref{eq:S(G)hpoly}) 
and~(\ref{eq:def2binomAn}) we get 
\[ h(\Delta(\Gamma_n), t) \ = \ \sum_{k=0}^n 
   {n \choose k} t^{n-k} A_k(t) \ = \ 
	 \widetilde{A}_n(t). \]
It was shown in \cite[Section~10.4]{PRW08} that 
$\widetilde{A}_n(t) = h(\widetilde{\Delta}_n, t)$,
where $\widetilde{\Delta}_n$ is the boundary complex
of the $n$-dimensional simplicial stellohedron. This 
polytope can be constructed by successively stellarly
subdividing the boundary faces of an $n$-dimensional 
simplex which contain a fixed vertex, in any order 
of decreasing dimension of these faces. Although this 
may not be obvious, the complex $\widetilde{\Delta}_n$ 
is combinatorially isomorphic to $\Delta(\Gamma_n)$. 
We omit the proof of this fact, since it is not 
essential for the results of this paper (but mention, 
as a hint for the interested reader, that such an 
isomorphism maps the vertices of 
$\widetilde{\Delta}_n$ corresponding to the faces of 
the simplex which contain the fixed vertex to those 
of the barycentric subdivision of $2^V$ and the other 
vertices to the elements of $U$). The construction of 
$\Delta(\Gamma_n)$ will allow us in the following 
section to provide the right generalization of
$\widetilde{\Delta}_n$ to the setting of $r$-colored 
permutations. 
\end{example} 

\section{Edgewise subdivisions and the proof of 
Theorem~\ref{thm:mainB}}
\label{sec:be}

This section employs the construction of $\Delta
(\Gamma)$ when $\Gamma$ is the $r$-fold edgewise 
subdivision of the barycentric subdivision of 
a simplex to complete the proof of 
Theorem~\ref{thm:mainB}. 

We first recall the definition of edgewise subdivision. 
Let $\Delta$ be an abstract simplicial complex whose vertex 
set $V(\Delta) = \{v_1, v_2,\dots,v_m\}$ is equipped with a 
fixed linear order. Given a positive integer $r$, denote 
by $V_r(\Delta)$ the set of maps $f: V(\Delta) \to \NN$ 
such that $\supp(f) \in \Delta$ and $f(v_1) + f(v_2) + 
\cdots + f(v_m) = r$, where $\supp(f)$ is the set of all
$v \in V(\Delta)$ for which $f(v) \ne 0$. For $f 
\in V_r(\Delta)$, let $\iota(f): V(\Delta) \to \NN$ be the 
map defined by setting $\iota(f)(v_j) = f(v_1) + f(v_2) + 
\cdots + f(v_j)$ for $j \in [m]$. The \emph{$r$-fold 
edgewise subdivision} of $\Delta$ is the abstract simplicial 
complex $\esd_r(\Delta)$ on the vertex set $V_r(\Delta)$ of 
which a set $E \subseteq V_r(\Delta)$ is a face if the 
following two conditions are satisfied:

\begin{itemize}
\itemsep=0pt
\item[$\bullet$]
$\bigcup_{f \in E} \, \supp(f) \in \Delta$ and

\item[$\bullet$]
$\iota(f) - \iota(g) \in \{0, 1\}^{V(\Delta)}$, or $\iota(g) 
- \iota(f) \in \{0, 1\}^{V(\Delta)}$, for all $f, g \in E$.
\end{itemize}

\noindent
Clearly, $\esd_r(\Delta)$ is combinatorially isomorphic to 
$\Delta$ for $r=1$.

The simplicial complex $\esd_r(\Delta)$ can be realized 
as a triangulation of $\Delta$; see, for instance, 
\cite[Section~3.3.1]{Ath17} \cite[Section~6]{BR05} 
\cite{EG00} and references therein, where its importance 
and long history in mathematics is also discussed, and 
\cite[Section~4]{HPPS18}, where it is studied under the 
name \emph{canonical triangulation}. More precisely, if 
$K$ is a geometric realization of $\Delta$ with 
corresponding ordered vertex set $V(K)$, then 
$\esd_r(\Delta)$ can be realized 
by a triangulation of $K$ in which the vertex $f \in 
V_r(\Delta)$ is represented by the point in $\|K\|$ with 
barycentric coordinates $f(v_i)/r$ with respect to $V(K)$. 
The restriction of $\esd_r(\Delta)$ to $F \in \Delta$ 
coincides with the triangulation $\esd_r(2^F)$ of the 
simplex $2^F$ (where $F$ is considered with the induced 
linear order).

  \begin{figure}
  \epsfysize = 1.8 in 
  \centerline{\epsffile{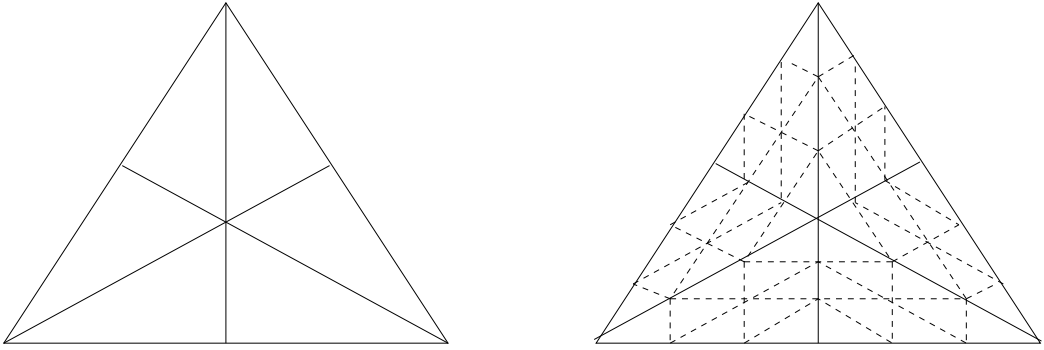}}
  \caption{The triangulations $\Gamma_3$ and $\Gamma_{3,3}$}
  \label{fig:KK33}
  \end{figure}
	
For the remainder of this section we set $\Gamma_{n,r} = 
\esd_r(\Gamma_n)$, where $\Gamma_n$ denotes the barycentric 
subdivision of the $(n-1)$-dimensional simplex (with its  
vertex set linearly ordered arbitrarily); see 
Figure~\ref{fig:KK33}. This triangulation was introduced in 
\cite{Ath14} in order to extend to the context of $r$-colored 
permutations Stanley's interpretation 
\cite[Proposition~2.4]{Sta92} of the derangement polynomial 
as a local $h$-polynomial. Since edgewise subdivision
preserves flagness and regularity (see, for instance, 
\cite[Theorem~4.11]{HPPS18}), $\Gamma_{n,r}$ is in fact a 
regular flag triangulation of the $(n-1)$-dimensional 
simplex. The relevance of $\Gamma_{n,r}$ to the study of 
Eulerian polynomials is explained by the following 
statement (recall that $A^+_{n,r}(t)$ was defined by 
Equation~(\ref{eq:Adnr+}) and the flag excedance number
$\fexc(w)$ for $w \in \ZZ_r \wr \fS_n$ was defined in 
Section~\ref{sec:pre}); part (c) is included for 
completeness.

\begin{proposition} \label{prop:Anr+} 
For all positive integers $n,r$:

\begin{itemize}
\itemsep=0pt
\item[{\rm (a)}]
$A^+_{n,r}(t) = h(\Gamma_{n,r}, t)$.

\item[{\rm (b)}]
\[ \frac{A^+_{n,r}(t)} {(1 - t)^n} \ = \ \sum_{k \ge 0}
   \left( (rk+1)^n - (rk)^n \right) t^k . \]

\item[{\rm (c)}]
\[ A^+_{n,r}(t) \ = \sum_{w \in (\ZZ_r \wr \fS_n)^+} 
 t^{\des(w)} \ = \sum_{w \in (\ZZ_r \wr \fS_n)^b} 
                   t^{\fexc(w)/r}, \]
where $(\ZZ_r \wr \fS_n)^+$ is the set of colored 
permutations $w \in \ZZ_r \wr \fS_n$ with first 
coordinate of zero color and $(\ZZ_r \wr \fS_n)^b$ 
is the set of $w \in \ZZ_r \wr \fS_n$ such that the 
sum of the colors of the coordinates of $w$ is 
divisible by $r$. 
\end{itemize}
\end{proposition}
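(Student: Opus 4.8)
The plan is to prove the three parts essentially independently, reducing each to an already‑available input: part~(a) to the computation of the local $h$‑polynomial of $\Gamma_{k,r}$ carried out in~\cite{Ath14}, part~(b) to the exponential generating function~(\ref{eq:expdnr}) together with Theorem~\ref{thm:Ath14}, and part~(c) to a shelling of $\Gamma_{n,r}$ together with a known flag‑excedance equidistribution. Starting with part~(a): recall from~\cite{Ath14} that the local $h$‑polynomial of $\Gamma_{k,r}$ equals $d^+_{k,r}(t)$. The triangulation $\Gamma_{n,r} = \esd_r(\Gamma_n)$ restricts, over a $k$‑element face $F$ of $2^V$, to a copy of $\Gamma_{k,r}$, as one sees by combining the fact that $\Gamma_n$ restricts to the barycentric subdivision of $F$ (Example~\ref{ex:h}) with the behaviour of edgewise subdivision under restriction recalled in Section~\ref{sec:be}. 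Hence Equation~(\ref{eq:h-localh}) yields
\[ h(\Gamma_{n,r}, t) \ = \ \sum_{F \subseteq V} \ell_F\bigl((\Gamma_{n,r})_F, t\bigr) \ = \ \sum_{k=0}^n {n \choose k} d^+_{k,r}(t) \ = \ A^+_{n,r}(t), \]
the last equality being Equation~(\ref{eq:Adnr+}).

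For part~(b), note first that by Theorem~\ref{thm:Ath14} the polynomial $d^+_{n,r}(t)$ is the palindromic summand of center $n/2$ in the unique decomposition of $d_{n,r}(t)$ as a sum of palindromic polynomials of centers $n/2$ and $(n+1)/2$, so that $(1-t)\, d^+_{n,r}(t) = d_{n,r}(t) - t^{n+1} d_{n,r}(1/t)$. Feeding this termwise into~(\ref{eq:expdnr}), a short computation gives
\[ \sum_{n \ge 0} d^+_{n,r}(t)\, \frac{z^n}{n!} \ = \ \frac{e^{(r-1)tz} - t\, e^{(r-1)z}}{e^{rtz} - t\, e^{rz}}, \]
whence, after multiplication by $e^z$ (which implements the binomial transform in~(\ref{eq:Adnr+})),
\[ \sum_{n \ge 0} A^+_{n,r}(t)\, \frac{z^n}{n!} \ = \ \frac{e^{(1+(r-1)t)z} - t\, e^{rz}}{e^{rtz} - t\, e^{rz}}. \]
Substituting $z \mapsto z/(1-t)$ and pulling the common factor $e^{rtz/(1-t)}$ out of numerator and denominator, the right‑hand side collapses to $(e^z - t e^{rz})/(1 - t e^{rz}) = 1 + (e^z - 1)/(1 - t e^{rz})$; expanding $(1 - t e^{rz})^{-1} = \sum_{k \ge 0} t^k e^{rkz}$ in $\CC[[t]][[z]]$ and comparing the coefficient of $z^n/n!$ for $n \ge 1$ then gives $A^+_{n,r}(t)/(1-t)^n = \sum_{k \ge 0}\bigl((rk+1)^n - (rk)^n\bigr) t^k$, which is part~(b) (the extra summand $1$ above affects only the $n=0$ term). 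One could alternatively deduce part~(a) from part~(b) using the known formula $h(\esd_r(\Delta), t)/(1-t)^n = \sum_m a_{rm} t^m$, valid whenever $h(\Delta, t)/(1-t)^n = \sum_m a_m t^m$ for an $(n-1)$‑dimensional complex $\Delta$: applied to $\Gamma_n$, where $h(\Gamma_n, t) = A_n(t)$ and $A_n(t)/(1-t)^n = \sum_m\bigl((m+1)^n - m^n\bigr) t^m$, it gives $h(\Gamma_{n,r}, t)/(1-t)^n = \sum_m\bigl((rm+1)^n - (rm)^n\bigr) t^m$, and comparison with part~(b) forces $h(\Gamma_{n,r}, t) = A^+_{n,r}(t)$.

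Finally, for part~(c): a suitable shelling of the regular triangulation $\Gamma_{n,r}$ has its facets indexed by the colored permutations in $(\ZZ_r \wr \fS_n)^+$ --- there are $r^{n-1} n!$ of each --- in such a way that the restriction statistic of the shelling is the descent number, so that $h(\Gamma_{n,r}, t) = \sum_{w \in (\ZZ_r \wr \fS_n)^+} t^{\des(w)}$; together with part~(a) this gives the first equality. The second equality is the equidistribution of $\des$ over $(\ZZ_r \wr \fS_n)^+$ with $\fexc/r$ over $(\ZZ_r \wr \fS_n)^b$, which is known and which I would simply quote. The main obstacle is not the generating‑function algebra or the shelling, which are routine, but locating and correctly invoking the external inputs --- the local $h$‑polynomial of $\Gamma_{n,r}$ from~\cite{Ath14} (or, equivalently, the edgewise‑subdivision $h$‑polynomial formula) and the flag‑excedance equidistribution --- together with being careful that the manipulations with $z \mapsto z/(1-t)$ and with $(1 - t e^{rz})^{-1}$ are read as identities in $\CC[[t]][[z]]$.
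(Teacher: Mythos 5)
Your part (a) is exactly the paper's argument: quote $\ell_V(\Gamma_{k,r},t)=d^+_{k,r}(t)$ from \cite{Ath14}, observe that $\Gamma_{n,r}$ restricts to a copy of $\Gamma_{k,r}$ over each $k$-element face, and combine (\ref{eq:h-localh}) with (\ref{eq:Adnr+}). Your part (b) is correct but takes a genuinely different route. The paper computes $A^+_{n,r}(t)={\rm E}_r\bigl((1+t+\cdots+t^{r-1})^n A_n(t)\bigr)$ using the operator ${\rm E}_r$ and Worpitzky's identity; you instead extract the exponential generating function of $d^+_{n,r}(t)$ from (\ref{eq:expdnr}) via the identity $(1-t)d^+_{n,r}(t)=d_{n,r}(t)-t^{n+1}d_{n,r}(1/t)$ (which is correct, by uniqueness of the palindromic decomposition in Theorem~\ref{thm:Ath14}), multiply by $e^z$, and substitute $z\mapsto z/(1-t)$. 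I checked the algebra: the numerator and denominator do share the factor $e^{rtz/(1-t)}$, the collapse to $(e^z-te^{rz})/(1-te^{rz})$ is right, and the expansion of $(1-te^{rz})^{-1}$ is legitimate in $\CC[[t]][[z]]$. This buys you a proof of (b) that bypasses the ${\rm E}_r$-formula for $h(\Gamma_{n,r},t)$ entirely and relies only on Theorem~\ref{thm:Ath14} and (\ref{eq:expdnr}), which is a clean alternative.

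Part (c) has a genuine gap. The first equality rests on your assertion that there is a shelling of $\Gamma_{n,r}$ whose facets are indexed by $(\ZZ_r\wr\fS_n)^+$ and whose restriction statistic is $\des$. Matching the facet count ($r^{n-1}n!$ on both sides) is easy, but producing such a shelling and verifying that its restriction sets realize the descent sets is precisely the combinatorial content of the identity $h(\Gamma_{n,r},t)=\sum_{w\in(\ZZ_r\wr\fS_n)^+}t^{\des(w)}$; it is not routine, you do not construct it, and you give no reference for it, so as written this is circular. The paper avoids any shelling: it quotes from \cite{Ath14} that $h(\Gamma_{n,r},t)={\rm E}_r\bigl((1+t+\cdots+t^{r-1})^n A_n(t)\bigr)=\sum_{w\in(\ZZ_r\wr\fS_n)^b}t^{\fexc(w)/r}$ and then identifies $\sum_{w\in(\ZZ_r\wr\fS_n)^+}t^{\des(w)}$ with the same ${\rm E}_r$-expression by setting $q=1$ in the Carlitz identity of Bagno--Biagioli \cite[Theorem~A.1]{BB07}. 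Your part (c) could be repaired without any shelling by citing that Carlitz specialization (or, equivalently, the Worpitzky-type identity for $(\ZZ_r\wr\fS_n)^+$) and comparing with your part (b); similarly, the ``known flag-excedance equidistribution'' you invoke for the second equality should be pinned down to the identity $h(\Gamma_{n,r},t)=\sum_{w\in(\ZZ_r\wr\fS_n)^b}t^{\fexc(w)/r}$ from \cite[Section~5]{Ath14} rather than left as an unattributed fact.
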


\begin{proof}  
It was shown in \cite[Section~5]{Ath14}
that $\ell_V(\Gamma_{n,r}, t) = d^+_{n,r} (t)$. 
Thus, part (a) follows from this fact and 
Equations~(\ref{eq:Adnr+}) and~(\ref{eq:h-localh}). 
It was also shown in \cite[Section~5]{Ath14} 
that 
\[ h(\Gamma_{n,r}, t) \ = \ {\rm E}_r \left( 
   (1 + t + t^2 + \cdots + t^{r-1})^n \, A_n (t) 
   \right) \ = \sum_{w \in (\ZZ_r \wr \fS_n)^b} 
                   t^{\fexc(w)/r}, \]
where ${\rm E}_r: \RR[t] \to \RR[t]$ is the 
linear operator defined by setting ${\rm E}_r 
(t^k) = t^{k/r}$, if $k$ is divisible by $r$, 
and ${\rm E}_r (t^k) = 0$ otherwise. Furthermore,
the equality 
\[ \sum_{w \in (\ZZ_r \wr \fS_n)^+} x^{\des(w)} 
   \ = \ {\rm E}_r \left( (1 + t + t^2 + \cdots + 
   t^{r-1})^n \, A_n (t) \right) \]
follows by setting $q=1$ in the Carlitz identity 
\cite[Theorem~A.1]{BB07} for $\ZZ_r \wr \fS_n$. 
Thus, part (c) follows from part (a) and the 
previous remarks. For part (b), we use the previous 
formulas and Worpitzky's identity $A_n(t)/(1-t)^{n+1} 
= \sum_{k \ge 1} k^n t^{k-1}$ to conclude that 

\begin{eqnarray*}
A^+_{n,r}(t) & = & 
{\rm E}_r \left( (1 + t + t^2 + \cdots + t^{r-1})^n 
\, A_n (t) \right) \ = \ 
{\rm E}_r \left( \, \frac{(1-t^r)^n}{(1 - t)^n} \, 
A_n (t) \right) \\ & & \\
& = & (1-t)^n \, {\rm E}_r \left( \, \frac{A_n(t)}
{(1 - t)^n} \right) \ = \ (1-t)^n \, {\rm E}_r 
\left( \, \sum_{k \ge 1} k^n (t^{k-1} - t^k) \right) 
\\ & & \\ &=& (1-t)^n \, \sum_{k \ge 0} 
\left( (rk+1)^n - (rk)^n \right) t^k
\end{eqnarray*}
and the proof follows.
\end{proof} 

\begin{remark} \label{rem:referee} \rm
Let $P = \{ (x_1, x_2,\dots,x_n) \in \RR^n: 0 \le x_i 
\le r\}$ be the $r$th dilate of the standard unit
$n$-dimensional cube. Then, $(rk+1)^n - (rk)^n$ 
is equal to the 
number of lattice points in the $k$th dilate of the 
union of the $n$ facets of $P$ which do not contain 
the origin. Thus, part (b) of Proposition~\ref{prop:Anr+} 
shows that $A^+_{n,r}(t)$ can be interpreted as the 
$h^\ast$-polynomial (defined as in 
Section~\ref{sec:stap}) of a lattice polyhedral complex, 
namely the collection of all faces of the $n$ facets of 
$P$ which do not contain the origin.
\qed
\end{remark}

The following statement completes the proof of 
Theorem~\ref{thm:mainB}.

\begin{corollary} \label{cor:mainB-hpoly} 
We have $\widetilde{A}^+_{n,r}(t) = 
h(\Delta(\Gamma_{n,r}), t)$ for all positive 
integers $n,r$. In particular, 
$\widetilde{A}^+_{n,r}(t)$ is equal to the 
$h$-polynomial of the boundary complex of 
an $n$-dimensional flag simplicial polytope.
\end{corollary}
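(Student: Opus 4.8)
The plan is to identify $\widetilde{A}^+_{n,r}(t)$ with the $h$-polynomial of the sphere triangulation $\Delta(\Gamma_{n,r})$ obtained from the construction of Section~\ref{sec:trian}, and then to invoke the structural properties of that construction to upgrade this into a polytopal statement. Concretely, I would apply part~(d) of Proposition~\ref{prop:S(G)} to the triangulation $\Gamma = \Gamma_{n,r} = \esd_r(\Gamma_n)$ of the $(n-1)$-dimensional simplex $2^V$. This gives
\[
 h(\Delta(\Gamma_{n,r}), t) \ = \ \sum_{F \subseteq V} t^{n - |F|} \, h\big((\Gamma_{n,r})_F, t\big).
\]
The key point is that edgewise subdivision commutes with restriction to faces: as recalled just before Proposition~\ref{prop:Anr+}, the restriction of $\esd_r(\Delta)$ to a face $F$ is $\esd_r(2^F)$, and the restriction of the barycentric subdivision $\Gamma_n$ to $F$ is the barycentric subdivision of $2^F$. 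Hence $(\Gamma_{n,r})_F = \Gamma_{|F|, r}$, and part~(a) of Proposition~\ref{prop:Anr+} identifies $h((\Gamma_{n,r})_F, t) = A^+_{|F|, r}(t)$. Summing over the $\binom{n}{m}$ subsets $F$ of each cardinality $m$ then yields
\[
 h(\Delta(\Gamma_{n,r}), t) \ = \ \sum_{m=0}^n \binom{n}{m} t^{n-m} A^+_{m,r}(t) \ = \ \widetilde{A}^+_{n,r}(t),
\]
the last equality being exactly the definition~(\ref{eq:defbinomAnr+}) of $\widetilde{A}^+_{n,r}(t)$ (noting the conventions $A^+_{0,r}(t) = 1$, which matches $h$ of the empty complex/point).

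For the second assertion I would argue that $\Delta(\Gamma_{n,r})$ is not merely an abstract triangulation of an $(n-1)$-sphere but is \emph{regular} and \emph{flag}, so that it arises as the boundary complex of an $n$-dimensional flag simplicial polytope. Regularity and flagness of $\Gamma_{n,r}$ itself are already noted in the paragraph preceding Proposition~\ref{prop:Anr+}: $\Gamma_n$ is the barycentric subdivision, which is flag and regular, and $\esd_r$ preserves both properties (e.g.\ \cite[Theorem~4.11]{HPPS18}). Then parts~(a) and~(b) of Proposition~\ref{prop:S(G)} transfer flagness and regularity from $\Gamma_{n,r}$ to $\Delta(\Gamma_{n,r})$. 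A regular triangulation of the $(n-1)$-sphere is, by the definition given in Section~\ref{sec:trian}, realizable as the boundary complex of a simplicial polytope of dimension $n$; flagness of that polytope's boundary complex is exactly flagness of the simplicial complex $\Delta(\Gamma_{n,r})$. This gives the stated conclusion.

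The only mildly delicate point — and the one I would take care over — is verifying that the restriction of $\Gamma_{n,r} = \esd_r(\Gamma_n)$ to a face $F \in 2^V$ is \emph{combinatorially isomorphic}, compatibly with the linear order, to $\Gamma_{|F|,r}$, so that $h((\Gamma_{n,r})_F, t) = A^+_{|F|,r}(t)$ with the correct indexing; this relies on the two facts (commutation of $\esd_r$ with restriction, and of barycentric subdivision with restriction) quoted above, applied in the right order, together with the choice of induced linear order on $F$. I do not expect any genuine obstacle here: everything needed has been assembled in Sections~\ref{sec:trian} and~\ref{sec:be}, and the proof is essentially a bookkeeping combination of Proposition~\ref{prop:S(G)}(d), Proposition~\ref{prop:Anr+}(a), and the definition~(\ref{eq:defbinomAnr+}), followed by the regularity/flagness transfer from Proposition~\ref{prop:S(G)}(a),(b).
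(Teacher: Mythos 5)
Your proposal is correct and follows essentially the same route as the paper: the first statement is obtained by combining Proposition~\ref{prop:S(G)}(d), Proposition~\ref{prop:Anr+}(a) and the defining Equation~(\ref{eq:defbinomAnr+}), and the second by transferring regularity and flagness of $\Gamma_{n,r}$ to $\Delta(\Gamma_{n,r})$ via Proposition~\ref{prop:S(G)}(a),(b). The only difference is that you spell out the (correct) identification $(\Gamma_{n,r})_F \cong \Gamma_{|F|,r}$, which the paper leaves implicit.
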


\begin{proof} 
The first statement is an immediate consequence
of Proposition~\ref{prop:Anr+} (a) and 
Equations~(\ref{eq:defbinomAnr+}) 
and~(\ref{eq:S(G)hpoly}). In view of 
Proposition~\ref{prop:S(G)}, the second statement 
follows from the first and the fact that $\Gamma_{n,r}$ 
is a regular flag triangulation. 
\end{proof} 

\medskip
Interpretations for the polynomials $d^+_{n,r}(t)$ 
and $d^{-}_{n,r}(t)$, which appear in 
Theorem~\ref{thm:Ath14}, as the local 
$h^\ast$-polynomials of certain $s$-lecture hall 
simplices were found by Gustafsson and 
Solus~\cite{GS18}. This raises the following 
question.

\begin{question}
Is there an Ehrhart-theoretic interpretation of 
$\widetilde{A}^{-}_{n,r} (t)$, possibly similar to the 
ones provided for $d^+_{n,r}(t)$ and $d^{-}_{n,r} (t)$ 
in \cite{GS18}? 
\end{question}

\section{Group actions}
\label{sec:act}

This section provides background from \cite{Ath18} 
\cite[Section~4]{Sta92} \cite{Ste94} which will be 
necessary to study $\fS_n$-equivariant analogues of 
colored Eulerian, derangement and binomial Eulerian
polynomials in the following sections. We assume
familiarity with Stanley--Reisner theory, Ehrhart theory 
and the homology of posets and refer the reader to the 
sources \cite{BR15, HiAC, Sta82, StaCCA, Wa07} for 
detailed expositions. 

\subsection{Fans and simplicial complexes}
\label{sec:fans}
Our discussion here follows closely 
\cite[Section~1]{Ste94} and \cite[Section~4]{Sta92}.
A \emph{simplicial fan} in Euclidean space $\RR^n$ 
is defined as a finite collection $\fF$ of pointed 
simplicial cones in $\RR^n$ with apex at the origin,
such that (a) every face of every cone in $\fF$ belongs 
to $\fF$; and (b) the intersection of any two cones in 
$\fF$ is a face of both. To such a fan $\fF$ one can 
associate an abstract simplicial complex $\Delta$ on 
the vertex set of one-dimensional cones (rays) of $\fF$ 
in the obvious way. We denote by $\|\fF\|$ the union 
of the cones of $\fF$ and assume that $\Delta$ is 
homeomorphic to the $(n-1)$-dimensional sphere (this 
happens if $\|\fF\| = \RR^n$, in which case $\fF$ is 
called \emph{complete}), or to the $(n-1)$-dimensional 
ball; in particular, $\Delta$ is Cohen--Macaulay over 
$\CC$ (and any other field).   

Suppose that $G$ is a finite group of orthogonal
transformations of $\RR^n$ which acts on $\fF$ and 
denote by $V(\Delta)$ the vertex set of $\Delta$. Then, 
$G$ acts simplicially on $\Delta$ and linearly on the
polynomial ring $S = \CC[x_v: v \in V(\Delta)]$ in 
commuting indeterminates which are in one-to-one 
correspondence with the vertices of $\Delta$. This 
action preserves the ideal $I_\Delta$ of $S$ which 
is generated by the square-free monomials which 
correspond to the non-faces of $\Delta$ and hence 
$G$ acts linearly on the \emph{face ring} $\CC[\Delta] 
= S / I_\Delta$ as well. As explained on 
\cite[p.~250]{Ste94}, the group $G$ leaves invariant 
the $\CC$-linear span of the linear forms 
\begin{equation} \label{eq:lsop}
   \theta_i \ = \ \sum_{v \in V(\Delta)} \langle 
   v, \ree_i \rangle x_v 
\end{equation}
for $i \in [n]$, where $\langle \ , \, \rangle$ 
is the standard inner product on $\RR^n$ and $(\ree_1,
\ree_2,\dots,\ree_n)$ is the basis of unit coordinate
vectors (any other basis works, but this 
one will be convenient in the sequel). As a result,
$G$ acts linearly on each homogeneous component of the 
(standard) graded ring 
\[ \CC(\Delta) \ := \ \CC[\Delta] / \Theta \ = \ 
   \bigoplus_{j=0}^n \CC(\Delta)_j, \]
where $\Theta$ is the ideal of $\CC[\Delta]$ generated 
by $\theta_1,\theta_2,\dots,\theta_n$. Since (by 
\cite[Lemma~III.2.4]{StaCCA}) this sequence is a 
linear system of parameters for $\CC[\Delta]$, which is 
Cohen--Macaulay over $\CC$, we have 
$\sum_{j=0}^n \dim(\CC(\Delta)_j) t^j = h(\Delta,t)$ and 
hence $\CC(\Delta) =  \oplus_{j=0}^n \CC(\Delta)_j$ is 
a $G$-equivariant analogue of $h(\Delta,t)$. By a 
theorem of Danilov~\cite{Da78}, if $\fF$ is complete 
and its cones are generated by elements of some lattice, 
then the space $\CC(\Delta)_j$ is isomorphic to the 
cohomology of degree $2j$ over $\CC$ of the complex 
projective toric variety associated to $\fF$ (although 
this is not the point of view adopted here). The pair 
$(\Delta, G)$ is said to satisfy the \emph{equivariant 
Gal phenomenon} \cite[Section~5]{SW17+} if the polynomial 
$\sum_{j=0}^n \CC(\Delta)_j t^j \in R(G)[t]$ is 
$\gamma$-positive (as defined towards the end of 
Section~\ref{sec:pre}). 

Following \cite[Section~1]{Ste94}, we say that the action 
of $G$ on $\Delta$ is \emph{proper} (and that $\fF$ 
carries a proper $G$-action) if every $w \in G$ fixes 
pointwise the vertices of every face $F \in \Delta$ which 
is fixed by $w$. Under this assumption, the set 
$\Delta^w$ of faces of $\Delta$ which are fixed by $w$ 
forms an induced subcomplex of $\Delta$, for every 
$w \in G$. Assuming that $G = \fS_n$ acts on 
$\RR^n$ by permuting coordinates and combining 
Theorem~1.4 with the considerations in Section~6 of 
\cite{Ste94}, we conclude that the formula
\begin{equation} \label{eq:ste}
  \sum_{j=0}^n \ch (\CC(\Delta)_j) (\bx) t^j \ = \ 
  \frac{1}{n!} \sum_{w \in \fS_n} 
  \frac{h(\Delta^w, t)}{(1 - t)^{1+\dim(\Delta^w)}}
  \, \prod_{i \ge 1} \, (1 - t^{\lambda_i(w)}) \,
  p_{\lambda_i(w)} (\bx)
\end{equation}
holds, provided $\fS_n$ acts properly on $\Delta$, 
where $\lambda_1(w) \ge \lambda_2(w) \ge \cdots$ are 
the lengths of the cycles of $w \in \fS_n$ and $p_k(\bx)$ 
is a power sum symmetric function. Stembridge 
\cite[Section~6]{Ste94} used this formula to give a 
new proof of Equation~(\ref{eq:genMn}), essentially due 
to Procesi~\cite{Pro90}, where $\varphi_n$ is the graded 
$\fS_n$-representation $\CC(\Delta_n) = \bigoplus_{j=0}^n 
\CC(\Delta_n)_j$ obtained from the $\fS_n$-action on 
the Coxeter fan associated to $\fS_n$ (we note that,
although \cite[Theorem~1.4]{Ste94} is stated for complete
fans, its proof exploits the Cohen--Macaulayness of 
$\Delta$ and hence applies in our situation).

Suppose now that $\|\fF\|$ is the positive orthant in 
$\RR^n$, generated by the unit coordinate vectors 
$\ree_1, \ree_2,\dots,\ree_n$, which is thus 
simplicially subdivided by $\fF$. We will denote by 
$\Gamma$ the associated simplicial complex, which can 
be considered as a triangulation of the geometric simplex 
$\Sigma_n$ on the vertex set $V_n = \{\ree_1, 
\ree_2,\dots,\ree_n\}$. Recall that $G$ acts on $\fF$ 
by orthogonal transformations and note that, in this 
case, $G$ must be a subgroup of the automorphism group 
$\fS_n$ of $\Sigma_n$. The \emph{local face module} 
\[ L_{V_n} (\Gamma) \ = \ \bigoplus_{j=0}^n 
   L_{V_n} (\Gamma)_j \]
is a graded $\CC$-vector space (and $S$-module) 
defined \cite[Definition~4.5]{Sta92} as the image in 
$\CC(\Gamma)$ of the ideal of $\CC[\Gamma]$ generated 
by the square-free monomials which correspond to the 
faces of $\Gamma$ lying in the relative interior of 
$\Sigma_n$. Since the coefficient of $x_v$ in the 
right-hand side of Equation~(\ref{eq:lsop}) is equal 
to zero for every $v \in V(\Gamma)$ lying on the facet
of $\Sigma_n$ opposite to $\ree_i$, the sequence 
$\theta_1,\theta_2,\dots,\theta_n$ is a special linear 
system of parameters for $\CC[\Gamma]$, in the sense of 
\cite[Definition~4.2]{Sta92}. By \cite[Theorem~4.6]{Sta92}, 
we have $\sum_{j=0}^n \dim (L_{V_n}(\Gamma))_j t^j = 
\ell_{V_n} (\Gamma, t)$. As explained in 
\cite[p.~823]{Sta92}, the group $G$ acts on each 
homogeneous component of $L_{V_n}(\Gamma)$, which 
becomes a graded $G$-representation and a $G$-equivariant 
analogue of $\ell_{V_n} (\Gamma, t)$. The pair 
$(\Gamma, G)$ satisfies the \emph{local equivariant Gal 
phenomenon} \cite[Section~5]{Ath17} \cite[Section~5]{Ath18} 
if the polynomial $\sum_{j=0}^n L_{V_n}(\Gamma)_j t^j \in
R(G)[t]$ is $\gamma$-positive.

Our prototypical example is the standard action of 
$\fS_n$, viewed as the group of symmetries of the 
simplex $\Sigma_n$, on the barycentric subdivision of 
$\Sigma_n$ (this action is easily verified to be proper). 
The following result combines \cite[Proposition~4.20]{Sta92} 
with an identity due to Gessel \cite[Equation~(87)]{Ath17} 
\cite[Equation~(6.3)]{SW17+} (a proof of which is 
given in \cite[Section~4]{Ath18}).
\begin{proposition} \label{prop:esdn}
For the $\fS_n$-action on the barycentric subdivision 
$\Gamma_n$ of the simplex $\Sigma_n$ and the 
corresponding graded $\fS_n$-representation $\psi_n = 
\oplus_{j=0}^n \, \psi_{n,j}$ on the local face module 
$L_{V_n}(\Gamma_n)$, we have
\[ 1 \, + \, \sum_{n \ge 1} z^n \, \sum_{j=0}^n 
  \ch (\psi_{n,j}) (\bx) t^j \ = \ 
  \frac{1 - t}{H(\bx; tz) - tH(\bx; z)}. \]
Moreover, the polynomial $\sum_{j=0}^n \psi_{n,j} t^j$
is $\gamma$-positive for every $n \ge 1$.
\end{proposition}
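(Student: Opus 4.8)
The plan is to prove the two assertions separately. For the generating function identity, I would first invoke \cite[Proposition~4.20]{Sta92}, which expresses the graded $\fS_n$-character of the local face module $L_{V_n}(\Gamma_n)$ of the barycentric subdivision in terms of a sum over permutations $w\in\fS_n$ of the local $h$-polynomials $\ell(\Gamma_n^w,t)$ weighted by the power-sum contributions of the cycle lengths of $w$; this is the local analogue of formula~(\ref{eq:ste}). The key combinatorial input is that, for the barycentric subdivision, the fixed subcomplex $\Gamma_n^w$ under $w\in\fS_n$ is again (combinatorially) a barycentric subdivision of a simplex of dimension equal to (number of cycles of $w$)$-1$, so that $\ell(\Gamma_n^w,t)$ is the derangement polynomial $d_{c(w)}(t)$, where $c(w)$ is the number of cycles. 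Summing the resulting expression over all $n$ and all $w$, one organizes the sum by cycle type; the exponential formula then converts the sum over permutations into a product, and plugging in the exponential generating function for derangement polynomials (the $r=1$ case of~(\ref{eq:expdnr}), i.e.\ $\sum_{n\ge0} d_n(t)z^n/n! = (1-t)/(e^{tz}-te^z)$) together with $\sum_{k\ge0} h_k(\bx)z^k = H(\bx;z)$ yields exactly $\frac{1-t}{H(\bx;tz)-tH(\bx;z)}$. This is the Gessel identity cited as \cite[Equation~(87)]{Ath17} \cite[Equation~(6.3)]{SW17+}, so I would mainly be assembling references rather than redoing the computation.

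For the $\gamma$-positivity of $\sum_{j=0}^n \psi_{n,j} t^j$, the cleanest route is again equivariant and goes through the fixed-point formula. One wants to exhibit the polynomial as $\sum_i \gamma_{n,i} t^i (1+t)^{n-2i}$ with each $\gamma_{n,i}$ a genuine (non-virtual) $\fS_n$-representation. The strategy is to pass to Frobenius characteristics and show Schur $\gamma$-positivity: using the identity just established, one extracts the coefficient of $z^n$, obtaining $\sum_j \ch(\psi_{n,j})(\bx)t^j$, and then exploits the known factorization of $1/(H(\bx;tz)-tH(\bx;z))$ in terms of the equivariant derangement-type symmetric functions. Concretely, the local derangement polynomial $d_n(t)$ is $\gamma$-positive (its $\gamma$-coefficients count the relevant ascent-restricted permutations as in the $r=1$ specialization of Theorem~\ref{thm:Ath14}), and this lifts to the statement that the corresponding equivariant analogue is Schur $\gamma$-positive; combined with the non-negativity of the $h_k(\bx)$ in the cycle-index expansion, a substitution $t\mapsto t$, $(1+t)$-bookkeeping gives the claimed $\gamma$-expansion with Schur-positive coefficients. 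Translating Schur $\gamma$-positivity back through $\ch^{-1}$ gives the representation-theoretic $\gamma$-positivity.

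The main obstacle is the second part: one must be careful that the equivariant $\gamma$-decomposition of the generating function is genuinely equivariant, i.e.\ that the symmetric-function $\gamma$-coefficients obtained are Schur-positive and not merely that their specializations (dimensions) are non-negative. The safe way is to cite the established equivariant $\gamma$-positivity of the derangement representations (the $r=1$, and in fact the ``plus part'' is the whole thing since $d_n^- $ contributes only to a shift) from \cite{Ath18} or the relevant result of \cite{SW17+}, and then observe that the transformation from derangement generating function to local-face-module generating function is a substitution that preserves Schur $\gamma$-positivity (multiplication by the $\gamma$-positive-preserving series built from $H(\bx;z)$). I would present this reduction explicitly and defer the core derangement equivariant $\gamma$-positivity to the cited literature, noting that the general colored case is treated later in Sections~\ref{sec:localgal}–\ref{sec:gal}, of which the present proposition is the $r=1$ prototype.
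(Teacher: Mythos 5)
Your proposal is correct and follows essentially the same route as the paper, which offers no independent proof of this proposition but presents it exactly as you do: as the combination of \cite[Proposition~4.20]{Sta92} (the fixed-point computation identifying the graded character of $L_{V_n}(\Gamma_n)$ via $\Gamma_n^w \cong \Gamma_{c(w)}$ and $\ell(\Gamma_{c(w)},t) = d_{c(w)}(t)$) with Gessel's identity, whose proof in \cite[Section~4]{Ath18} already exhibits the Schur $\gamma$-positive expansion of the right-hand side and hence yields the equivariant $\gamma$-positivity. One small correction: for $r=1$ the local face module generating function \emph{is} the equivariant derangement generating function $(1-t)/(H(\bx;tz)-tH(\bx;z))$, so no additional ``multiplication by a series built from $H(\bx;z)$'' is needed (that step would instead take you from local face modules to face rings, as in Proposition~\ref{prop:staH}).
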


A careful examination of the proof of 
\cite[Proposition~4.20]{Sta92} shows that it actually 
yields the following more general statement. By the 
standard embedding $\RR^m \hookrightarrow \RR^n$, we 
may consider $\Sigma_m$ as a face of $\Sigma_n$ for 
$m \le n$.

\begin{proposition} \label{prop:staH}
Let $K_n$ be a triangulation of the simplex $\Sigma_n$, 
for every $n \ge 1$, so that $K_m$ is the restriction of 
$K_n$ to $\Sigma_m$ for all $m \le n$. Suppose that for 
every $n \ge 1$, the full automorphism group $\fS_n$ of 
$\Sigma_n$ acts on $K_n$. Then,

\[ 1 \, + \, \sum_{n \ge 1} z^n \, \sum_{j=0}^n \ch 
   (\CC(K_n)_j) (\bx) t^j \ = \ H(\bx; z) \left( 
	 1 \, + \, \sum_{n \ge 1} z^n \, \sum_{j=0}^n \ch 
	 (L_{V_n}(K_n)_j) (\bx) t^j \right). \]
\end{proposition}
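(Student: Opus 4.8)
The plan is to deduce Proposition~\ref{prop:staH} from the inclusion-exclusion identity (\ref{eq:h-localh}) together with its equivariant refinement. Recall that for a triangulation $K_n$ of $\Sigma_n$ we have $h(K_n,t) = \sum_{F \subseteq V_n} \ell_F((K_n)_F, t)$, where $(K_n)_F$ is the restriction of $K_n$ to the face $F$. The key observation is that this identity upgrades to the level of $\fS_n$-representations: the face ring $\CC(K_n)$ decomposes, as a graded $\CC$-vector space, into pieces indexed by faces $F \subseteq V_n$, with the piece indexed by $F$ being (up to a shift accounting for the special l.s.o.p.) the local face module $L_F((K_n)_F)$. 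This is exactly the content underlying Stanley's proof of \cite[Proposition~4.20]{Sta92}: the special linear system of parameters $\theta_1,\dots,\theta_n$ interacts with the square-free monomial basis of $\CC[K_n]$ in a way that sorts basis monomials by the minimal face of $\Sigma_n$ containing their support. First I would make this decomposition precise, verifying that it is $\fS_n$-equivariant in the appropriate sense — that is, $w \in \fS_n$ carries the summand indexed by $F$ to the summand indexed by $w(F)$.

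Next I would pass to Frobenius characteristics. Since the summands indexed by faces $F$ of a fixed cardinality $m$ are permuted transitively by $\fS_n$, with the stabilizer of the standard $F = V_m$ being $\fS_m \times \fS_{n-m}$ acting on $L_{V_m}((K_n)_{V_m}) = L_{V_m}(K_m)$ (using the compatibility hypothesis that $K_m$ is the restriction of $K_n$ to $\Sigma_m$) tensored with the trivial representation of $\fS_{n-m}$, the induction-restriction formalism gives
\[
  \ch\big(\CC(K_n)_j\big)(\bx) \ = \ \sum_{m=0}^{n} h_{n-m}(\bx) \cdot
  \ch\big(L_{V_m}(K_m)_j\big)(\bx),
\]
because $\ch$ of the representation induced from $\fS_m \times \fS_{n-m}$ of $V \boxtimes (\text{trivial}_{n-m})$ is $\ch(V) \cdot h_{n-m}(\bx)$. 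Multiplying by $z^n t^j$ and summing over $n$ and $j$ then factors the left-hand generating function as $H(\bx;z)$ times the generating function for the $\ch(L_{V_m}(K_m)_j)$, which is precisely the claimed identity (the $m=0$ term contributing the leading $1$ on both sides).

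The main obstacle is the first step: justifying the $\fS_n$-equivariant direct sum decomposition $\CC(K_n) \cong \bigoplus_{F \subseteq V_n} L_F((K_n)_F)$. The non-equivariant statement is implicit in the inclusion-exclusion relating $h$ and $\ell$, but one needs a canonical, $\fS_n$-stable splitting rather than just an equality of dimensions. Here I would invoke the argument of \cite[Proposition~4.20]{Sta92} verbatim: Stanley constructs, for a special l.s.o.p., an explicit basis of $\CC[K_n]/\Theta$ adapted to the faces of $\Sigma_n$, and the decomposition by "support face" is manifestly preserved by any automorphism of $\Sigma_n$ that acts on $K_n$, since such an automorphism permutes the $\theta_i$ among themselves and permutes the relevant square-free monomials compatibly. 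Granting this — which is the "careful examination" alluded to in the paragraph preceding the proposition — the rest is the bookkeeping with $\ch$ and induced representations sketched above, and the compatibility hypothesis on the $K_n$ is exactly what is needed to identify each face-summand with a $L_{V_m}(K_m)$ rather than with some unrelated module.
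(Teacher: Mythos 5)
Your proposal is correct and is essentially the argument the paper intends: the paper gives no written proof, deferring to ``a careful examination of the proof of \cite[Proposition~4.20]{Sta92}'', and that examination yields exactly your steps --- the canonical decomposition of $\CC(K_n)$ into local face modules $L_F((K_n)_F)$ indexed by faces $F\subseteq V_n$, its $\fS_n$-equivariance, and the identification of the codimension-$m$ piece with the representation induced from $\fS_m\times\fS_{n-m}$, whose Frobenius characteristic contributes the factor $h_{n-m}(\bx)$. This is also the same bookkeeping the author carries out explicitly in the proof of Proposition~\ref{prop:athaH}.
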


\medskip
\subsection{Posets}
Group actions on posets induce representations on their 
homology which often have combinatorial significance; 
see \cite{Sta82} \cite[Chapter~2]{Wa07}. Here we describe 
a special situation which will be useful in 
the following sections, referring to \cite{Ath18} for 
more details and to \cite[Chapter~3]{StaEC1} for any 
undefined poset terminology.

Given finite graded posets $\pP$ and $\qQ$ with rank 
functions $\rho_\pP$ and $\rho_\qQ$, respectively, 
the \emph{Rees product} $\pP \ast \qQ$ was defined by 
Bj\"orner and Welker~\cite{BjW05} as the set $\{ (p, q) 
\in \pP \times \qQ: \rho_\pP (p) \ge \rho_\qQ (q) \}$, 
partially ordered by setting $(p_1, q_1) \preceq (p_2, 
q_2)$ if the following conditions are satisfied:

\begin{itemize}
\itemsep=0pt
\item[$\bullet$]
$p_1 \preceq p_2$ holds in $\pP$,

\item[$\bullet$]
$q_1 \preceq q_2$ holds in $\qQ$ and

\item[$\bullet$]
$\rho_\pP (p_2) - \rho_\pP (p_1) \ge \rho_\qQ (q_2) 
- \rho_\qQ (q_1)$.
\end{itemize}

\noindent
The poset $\pP \ast \qQ$ is graded and if a finite group 
$G$ acts on $\pP$ by order-preserving bijections, then 
it does so on $\pP \ast \qQ$ as well by acting trivially 
on the second coordinate of its elements. Thus, there is 
an induced $G$-representation on the homology
$\widetilde{H}_\ast (\pP \ast \qQ; \CC)$. 

Assuming $\pP$ is bounded, let us denote by $\pP^-$, 
$\pP_{-}$ and $\bar{\pP}$ the poset obtained from $\pP$ 
by removing its maximum element, or minimum element, or both,
respectively, and by $T_{t,n}$ the poset whose Hasse 
diagram is a complete $t$-ary tree of height $n$, rooted 
at the minimum element. The following statement is a 
direct consequence of \cite[Theorem~1.2]{Ath18}, which is 
an equivariant analogue of \cite[Corollary~3.8]{LSW12}; it
plays a key role in the proofs of all equivariant 
$\gamma$-positivity results in the following sections.
\begin{theorem} {\rm (\cite{Ath18})} \label{thm:Athrees}
Let $\pP$ be a finite bounded poset of rank $n+1$ which
is Cohen--Macaulay over $\CC$ and $G$ be a finite group 
which acts on $\pP$ by order-preserving bijections. Then, 
there exist non-virtual $G$-representations 
$\beta^+_{\pP,i}$, $\beta^{-}_{\pP,i}$, 
$\gamma^+_{\pP,i}$ and $\gamma^{-}_{\pP,i}$ such that 

\begin{equation} \label{eq:Ath1} 
\widetilde{H}_{n-1} (\bar{\pP} \ast T_{t,n-1}; \CC) 
\ \cong_G \ \sum_{i=1}^{\lfloor n/2 \rfloor} 
\beta^+_{\pP,i} \, t^i (1+t)^{n-2i} \ + 
\sum_{i=0}^{\lfloor (n-1)/2 \rfloor} \beta^{-}_{\pP,i} 
\, t^i (1+t)^{n-1-2i}
\end{equation}
and
\begin{equation} \label{eq:Ath2}
\widetilde{H}_{n-1} ((\pP^- \ast T_{t,n})_{-}; \CC)  
\ \cong_G \ \sum_{i=0}^{\lfloor n/2 \rfloor}
\gamma^+_{\pP,i} \, t^i (1+t)^{n-2i} \ + 
\sum_{i=1}^{\lfloor (n+1)/2 \rfloor} 
\gamma^{-}_{\pP,i} \, t^i (1+t)^{n+1-2i}
\end{equation}
for every positive integer $t$.
\end{theorem}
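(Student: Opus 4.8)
The plan is to deduce both identities directly from \cite[Theorem~1.2]{Ath18}, the equivariant refinement of \cite[Corollary~3.8]{LSW12}, and to reserve the modest work that remains for the bookkeeping that relates the two truncations of $\pP$ occurring in~(\ref{eq:Ath1}) and~(\ref{eq:Ath2}).

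First I would recall the content of \cite[Theorem~1.2]{Ath18}: for a poset which is Cohen--Macaulay over $\CC$ and carries an order-preserving action of a finite group $G$, it asserts that, for every positive integer $t$, the top reduced homology of the Rees product of a suitable truncation of that poset with the complete $t$-ary tree $T_{t,m}$ is given by a polynomial in $t$ with coefficients in $R(G)$ which splits as a $\gamma$-positive palindromic polynomial of one center plus a $\gamma$-positive palindromic polynomial of the neighbouring center, with $\gamma$-coefficients that are non-virtual $G$-representations independent of $t$. Before applying it I would verify the two preliminary facts it rests on in our situation: because $\pP$ is Cohen--Macaulay of rank $n+1$, the reduced homology of $\bar\pP \ast T_{t,n-1}$ (respectively of $(\pP^- \ast T_{t,n})_-$) is concentrated in degree $n-1$ and depends polynomially on $t$. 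This is the Rees-product homology analysis of Bj\"orner--Welker \cite{BjW05} combined with the Cohen--Macaulay hypothesis on $\pP$; granting it, identity~(\ref{eq:Ath1}) is an immediate instance of the master result applied to $\bar\pP$.

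For~(\ref{eq:Ath2}) I would identify the passage from $\bar\pP \ast T_{t,n-1}$ to $(\pP^- \ast T_{t,n})_-$ as, up to $G$-equivariant homotopy, a single coning or suspension step on order complexes: keeping the bottom rank of $\pP$ instead of deleting it, while lengthening the tree from height $n-1$ to height $n$, amounts to an appropriate join, which raises the homological degree by one and transforms the pair of centers $\{\, n/2,\ (n-1)/2 \,\}$ into $\{\, n/2,\ (n+1)/2 \,\}$ while preserving both $\gamma$-positivity and the non-virtuality of the coefficients. Carrying this comparison out carefully and then invoking the master result in this second guise yields~(\ref{eq:Ath2}); if \cite[Theorem~1.2]{Ath18} already records both truncations, this step collapses to applying it twice, once to $\bar\pP$ and once to $\pP^-$.

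The genuine obstacle is internal to \cite{Ath18} and need not be reproved here. The non-equivariant $\gamma$-positivity of these Rees-product Betti numbers is soft---it follows from a valley-hopping involution on the set of chains that index a basis of the top homology---but upgrading it to the equivariant level requires an honest $G$-stable decomposition of the homology whose summands visibly carry the powers of $1+t$, and this is where a $G$-equivariant lexicographic shelling of the Rees product and the full strength of the Cohen--Macaulayness of $\pP$ enter. Accordingly, in this section I would treat Theorem~\ref{thm:Athrees} as a result imported from \cite{Ath18}, and spend the proof only on pinning down the two order complexes above and checking the degree and center bookkeeping.
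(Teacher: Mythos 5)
Your proposal matches the paper's treatment: Theorem~\ref{thm:Athrees} is not proved here but is stated verbatim as a direct consequence of \cite[Theorem~1.2]{Ath18}, which already records both truncations $\bar{\pP} \ast T_{t,n-1}$ and $(\pP^- \ast T_{t,n})_{-}$ with the stated centers, so the comparison step you sketch in your second paragraph collapses exactly as you anticipate. The only caveat is that your speculation about the internal mechanism of \cite{Ath18} (valley-hopping, equivariant shelling) is not needed and should not be presented as part of the argument; the correct move, which you ultimately make, is simply to import the result.
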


\medskip
\subsection{Lattice points}
\label{sec:stap}
Let $P$ be an $n$-dimensional lattice polytope in 
$\RR^N$. The \emph{$h^\ast$-polynomial} (or 
\emph{Ehrhart $h$-polynomial}) of $P$ is defined 
by the equation
\[ \sum_{k \ge 0} |kP \cap \ZZ^N| \, t^k \ = \ 
   \frac{h^\ast(P,t)}{(1-t)^{n+1}}, \]
where $kP$ is the $k$th dilate of $P$. The 
function $h^\ast(P,t)$ is indeed a polynomial in $t$, 
with nonnegative integer coefficients and degree not 
exceeding $n$, which is very well studied in algebraic
and geometric combinatorics; see \cite[Chapter~3]{BR15}
\cite[Section~4.6]{StaEC1} and references therein. The 
problem to investigate when $h^\ast(P,t)$ is unimodal, in 
particular, has been of great interest; see \cite{Br16}
for a recent survey on this topic.

Equivariant analogues of $h^\ast(P,t)$ are provided by 
Stapledon's equivariant Ehrhart theory~\cite{Stap11} as 
follows. Suppose $G$ is a finite group which acts linearly 
on $\RR^N$, preserving the lattice $\ZZ^N$, and leaves $P$ 
invariant. Assume further that the affine span of $P$ 
contains the origin and let $\lL$ be its intersection 
with $\ZZ^N$. Stapledon~\cite{Stap11} defines the formal 
power series $\varphi^\ast_P(t)$, with coefficients in the 
representation ring of $G$, via the generating function 
formula
\begin{equation} \label{eq:def-phi*}
  \sum_{k \ge 0} \chi_{kP} \, t^k \ = \ 
  \frac{\varphi^\ast_P (t)}{(1-t) \det(I - \rho t)}, 
\end{equation}
where $\chi_{kP}$ is the permutation representation
defined by the $G$-action on the set of lattice points 
of $kP$ and $\rho: G \to GL(\lL)$ is the induced 
representation.  

The series $\varphi^\ast_P(t)$ is a 
$G$-equivariant analogue of $h^\ast(P,t)$ which, under
additional assumptions (see \cite[Section~7]{Stap11}) is 
a polynomial in $t$ whose coefficients are non-virtual 
$G$-representations. For example, if $P$ is the standard
unit cube in $\RR^n$ on which the symmetric group $\fS_n$ 
acts by permuting coordinates, then $h^\ast(P,t) = A_n(t)$ 
and $\varphi^\ast_P(t) = \sum_{j=0}^{n-1} \varphi_{n,j} 
t^j$, in the notation of Section~\ref{sec:intro}. For 
this particular $\fS_n$-action on an $n$-dimensional 
lattice polytope $P$ in $\RR^n$, and identifying 
$G$-representations with their characters,
Equation~(\ref{eq:def-phi*}) yields that 
\begin{equation} \label{eq:fSn-phi*}
  \sum_{k \ge 0} \chi_{kP}(w) t^k \ = \ 
  \frac{\varphi^\ast_P (t)(w)} {(1-t) \prod_{i \ge 1}
  (1 - t^{\lambda_i(w)})}
\end{equation}
for every $w \in \fS_n$, where $\lambda_i(w)$ are the 
lengths of the cycles of $w$.

\section{Equivariant analogues of colored derangement
         polynomials}
\label{sec:localgal}

This section studies an $\fS_n$-equivariant analogue 
of the colored derangement polynomial $d_{n,r}(t)$ 
and confirms the local equivariant Gal phenomenon for
the natural $\fS_n$-action on the triangulation 
$\Gamma_{n,r}$, considered in Section~\ref{sec:be}. 
These results generalize Proposition~\ref{prop:esdn} 
(the case $r=1$) and are partially extended in the 
following section, which focuses on the colored binomial 
Eulerian polynomials.  

We define (up to isomorphism) the graded 
$\fS_n$-representation $\psi_{n,r} = \oplus_{j=0}^n 
\, \psi_{n,r,j}$ by the generating function formula
\begin{equation} \label{eq:defpsi-nr}
  1 \, + \, \sum_{n \ge 1} z^n \, \sum_{j=0}^n 
  \ch (\psi_{n,r,j}) (\bx) t^j \ = \ 
	\frac{(1-t) H(\bx; tz)^{r-1}} 
	{H(\bx; tz)^r - tH(\bx; z)^r}.
\end{equation}
Applying the specialization ${\rm ex}^\ast$ (see 
Section~\ref{sec:pre}) on both sides gives
\[ 1 \, + \, \sum_{n \ge 1} \, \frac{z^n}{n!} \, 
\sum_{j=0}^n \dim (\psi_{n,r,j}) t^j \ = \ 
\frac{(1-t) e^{(r-1)tz}} {e^{rtz} - te^{rz}} \]
and hence, a comparison with Equation~(\ref{eq:expdnr})
shows that $\psi_{n,r}$ is indeed an $\fS_n$-equivariant 
analogue of $d_{n,r}(t)$. Clearly, for $r=1$ it reduces 
to the graded $\fS_n$-representation $\psi_n$ which 
appears in Proposition~\ref{prop:esdn}.

To define precisely the $\fS_n$-action on $\Gamma_{n,r}$
that was mentioned earlier, consider an abstract 
simplicial complex $\Delta$, as in the beginning of 
Section~\ref{sec:be}, on a linearly ordered vertex set 
$V(\Delta)$. Suppose that $G$ acts simplicially on 
$\Delta$ and preserves the induced linear order on the 
vertex sets of faces of $\Delta$, in other words, if 
$\{u, v\} \in \Delta$ and $u$ precedes $v$, then $w \cdot 
u$ precedes $w \cdot v$ for every $w \in G$ (note that 
such an action is automatically proper). Then, $G$ acts 
on $V_r(\Delta)$ by the rule $(w \cdot f)(v) = f(w^{-1} 
\cdot v)$ for $w \in G$, $f \in V_r(\Delta)$ and
$v \in V(\Delta)$. The proof of the following statement
is fairly straightforward.

\begin{proposition} \label{prop:edgeact}
Under the stated assumptions:

\begin{itemize}
\itemsep=0pt

\item [(a)]
the $G$-action on $V_r(\Delta)$ induces a proper 
simplicial $G$-action on $\esd_r(\Delta)$, and 

\item [(b)]
the subcomplex $\esd_r(\Delta)^w$ is combinatorially 
isomorphic to $\esd_r(\Delta^w)$ for every $w \in G$, 
where the vertex set of $\Delta^w$ is considered with 
the induced linear order. 
\end{itemize}
\end{proposition}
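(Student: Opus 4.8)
The plan is to verify the two assertions directly from the definition of edgewise subdivision and the combinatorial description of the $G$-action on $V_r(\Delta)$. For part (a), I first need to check that the $G$-action on $V_r(\Delta)$ is well-defined, i.e. that $w \cdot f$ again lies in $V_r(\Delta)$ whenever $f$ does: this is clear because $\supp(w \cdot f) = w \cdot \supp(f)$, which is a face of $\Delta$ since $G$ acts simplicially, and the sum $\sum_{v} (w\cdot f)(v) = \sum_v f(w^{-1}\cdot v) = \sum_v f(v) = r$ is preserved. Next I must show that $G$ sends faces of $\esd_r(\Delta)$ to faces. So let $E \subseteq V_r(\Delta)$ be a face and $w \in G$; I need $\bigcup_{f \in w\cdot E} \supp(f) = w \cdot \bigl( \bigcup_{f\in E}\supp(f)\bigr) \in \Delta$, which again follows from simpliciality, and I need the compatibility condition on the $\iota$-maps for $w\cdot E$. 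The key observation here is that because $G$ \emph{preserves the induced linear order on vertex sets of faces}, for any two elements $f, g \in E$ — whose supports together form a face $F$ of $\Delta$ — the map $\iota$ computed inside $F$ is ``equivariant'' in the appropriate sense: writing the elements of $F$ in increasing order, $w$ sends this ordered list to the increasing order of $w\cdot F$, so that $\iota(w\cdot f)(w\cdot v) = \iota_F(f)(v)$ for $v\in F$. Consequently $\iota(w\cdot f) - \iota(w\cdot g)$, read off along $w\cdot F$, is just a relabeling of $\iota(f)-\iota(g)$ read off along $F$, hence still lies in $\{0,1\}^{w\cdot F}$ (equivalently in $\{0,1\}^{V(\Delta)}$ after extending by zero) exactly when the original did. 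This gives that $w\cdot E$ is a face. Properness of the resulting action was already noted to be automatic for order-preserving actions, and is also easy to check directly: if $w$ fixes a face $E$ setwise it must fix each $f\in E$ because $w$ permutes $\supp(f)\subseteq F$ order-preservingly and fixes the multiset of values $\{f(v)\}$, forcing $w$ to fix $f$ pointwise on $\supp(f)$, hence to fix $f$.

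For part (b), I would fix $w \in G$ and exhibit an explicit bijection between the vertex set of $\esd_r(\Delta)^w$ and that of $\esd_r(\Delta^w)$ which preserves the face relation in both directions. A vertex of $\esd_r(\Delta)^w$ is an $f \in V_r(\Delta)$ with $w\cdot f = f$, i.e. $f(w^{-1}\cdot v) = f(v)$ for all $v$; in particular $\supp(f)$ is a $w$-invariant face of $\Delta$, and since the action is proper, $w$ fixes $\supp(f)$ pointwise, so $\supp(f) \in \Delta^w$. Thus restricting such an $f$ to $V(\Delta^w)$ (noting $f$ vanishes off $V(\Delta^w)$, because $f$ is constant on $w$-orbits and supported on a $w$-fixed set) gives a map $V(\Delta^w)\to\NN$ with support in $\Delta^w$ summing to $r$ — a vertex of $\esd_r(\Delta^w)$ — and this restriction map is clearly a bijection with inverse ``extend by zero''. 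To see it induces an isomorphism of complexes I would compare the two defining conditions: the support condition matches because $\Delta^w$ is an induced subcomplex of $\Delta$, and the $\iota$-condition matches because the induced linear order on $V(\Delta^w)$ is the one inherited from $V(\Delta)$ and, for $w$-fixed $f$ supported on a $w$-fixed face, the partial sums defining $\iota(f)$ computed within $V(\Delta^w)$ agree with those computed within $V(\Delta)$ (the extra vertices all contribute zero). Hence a set $E$ of $w$-fixed vertices is a face of $\esd_r(\Delta)^w$ if and only if its image is a face of $\esd_r(\Delta^w)$, and the combinatorial isomorphism follows.

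I expect the main obstacle to be bookkeeping with the $\iota$-maps and the linear orders: one has to be careful that $\iota$ is defined relative to the \emph{global} linear order on $V(\Delta)$ but only ``sees'' the vertices in the relevant face, and to make precise the sense in which $w$ intertwines $\iota$ computed on $F$ with $\iota$ computed on $w\cdot F$. The hypothesis that $G$ preserves the induced linear order on vertex sets of faces is exactly what makes this work, and the cleanest way to phrase it is the identity $\iota(w\cdot f)(w\cdot v_j) = \iota(f)(v_j)$ for the elements $v_1 \prec v_2 \prec \cdots$ of a face $F$ containing $\supp(f)$; once that is established, both parts are short. The rest is routine and I would leave the verification that the natural order on $\esd_r(\Delta)^w$-vertices corresponds correctly to that on $\esd_r(\Delta^w)$-vertices as a remark, since the complex $\esd_r$ does not depend on the order up to combinatorial isomorphism anyway.
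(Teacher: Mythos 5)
Your argument is correct and follows essentially the same route as the paper: the two defining conditions of $\esd_r(\Delta)$ transfer via $\supp(w\cdot f)=w\cdot\supp(f)$ and the order-preservation hypothesis, and properness comes from the fact that $w$ fixes the union $F$ of the supports pointwise (the paper runs this step through $F$ and the properness of the action on $\Delta$, which is the cleaner way to justify your phrase ``$w$ permutes $\supp(f)$ order-preservingly'' --- a priori $w$ only stabilizes $F$, not each individual $\supp(f)$). For part (b) the paper leaves the details to the reader, so your restriction/extension-by-zero bijection and the comparison of the $\iota$-conditions supply exactly the omitted verification.
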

\begin{proof}
For part (a), given a face $E \in \esd_r(\Delta)$ and 
$w \in G$, we need to show that $w(E) := \{ w \cdot f: 
f \in E\} \in \esd_r(\Delta)$. Since, by definition of 
the $G$-action on $V_r(\Delta)$, $\supp(w \cdot f) = w 
\cdot \supp(f)$ for every $f \in V_r(\Delta)$, we have 
\[ \bigcup_{f \in E} \, \supp(w \cdot f) = w \left(
\bigcup_{f \in E} \, \supp(f) \right). \]
Thus, the first condition in the definition of 
$\esd_r(\Delta)$ that the union of the supports of the 
elements of $E$ is a face of $\Delta$ transfers to $w(E)$.
The second condition transfers as well because the 
$G$-action respects the given linear order on $V(\Delta)$
and hence the nonzero values of $\iota(w \cdot f) - 
\iota(w \cdot g)$ are exactly those of 
$\iota(f) - \iota(g)$. To verify that the $G$-action on
$\esd_r(\Delta)$ is proper, suppose that $w \in G$ fixes 
$E \in \esd_r(\Delta)$. Then, $w$ fixes the union, say $F 
\in \Delta$, of the supports of all elements of $E$. Since, 
as has already been commented, the $G$-action on $\Delta$ 
is proper, $w$ fixes $F$ pointwise. Therefore, it fixes all 
elements of $V_r(\Delta)$ whose support is contained in
$F$; in particular, $w$ fixes $E$ pointwise. 

Part (b) follows easily from the fact that the $G$-actions 
on $\Delta$ and $\esd_r(\Delta)$ are proper; the details 
are left to the reader.
\end{proof}

Following the notation of Section~\ref{sec:act}, we
linearly order the vertex set of the barycentric 
subdivision $\Gamma_n$ of the simplex $\Sigma_n$ in 
any way which respects the inclusion order on the 
corresponding faces of $\Sigma_n$ and define the 
$r$-fold edgewise subdivision $\Gamma_{n,r}$ using 
this order. Since the $\fS_n$-action on $\Gamma_n$ 
preserves the inclusion order on its faces, it induces 
a proper $\fS_n$-action on $\Gamma_{n,r}$ which, as 
discussed in Section~\ref{sec:be}, can be realized as 
a triangulation of $\Sigma_n$ which refines $\Gamma_n$. 
Thus, $\fS_n$ acts linearly on the local face module 
$L_{V_n}(\Gamma_{n,r})$ and the resulting graded 
$\fS_n$-representation is an $\fS_n$-equivariant 
analogue of the local $h$-polynomial $\ell_{V_n} 
(\Gamma_{n,r},t)$. The latter was shown in 
\cite{Ath14} to equal the polynomial $d_{n,r}^+(t)$ 
which appeared in Theorem~\ref{thm:Ath14}. Therefore, 
the following statement may be viewed as an 
$\fS_n$-equivariant analogue of part of 
\cite[Theorems 1.2 and~1.3]{Ath14}.

\begin{theorem} \label{thm:psi-nr}
There exists an isomorphism 
\begin{equation} \label{eq:isopsi-nr} 
\psi_{n,r} \, \cong_{\fS_n} \, \psi_{n,r}^+ \, \oplus \, 
\psi_{n,r}^{-} 
\end{equation}
of graded $\fS_n$-representations, where $\psi_{n,r}^+ 
= \oplus_{j=0}^n \, \psi_{n,r,j}^+$ and $\psi_{n,r}^{-} 
= \oplus_{j=0}^n \, \psi_{n,r,j}^{-}$ are such that

\begin{itemize}
\itemsep=0pt

\item
$\sum_{j=0}^n \, \psi_{n,r,j}^+ t^j$ is a
$\gamma$-positive polynomial with center $n/2$ and 
zero constant term, 

\item
$\sum_{j=0}^n \, \psi_{n,r,j}^{-} t^j$ is a
$\gamma$-positive polynomial with center $(n+1)/2$ and 
zero constant term. 
\end{itemize}

Moreover, $\psi_{n,r}^+$ is isomorphic to the graded
$\fS_n$-representation on the local face module $L_{V_n}
(\Gamma_{n,r})$, induced by the $\fS_n$-action on
$\Gamma_{n,r}$, and
\begin{eqnarray} \label{eq:genpsi+nr}
1 \, + \, \sum_{n \ge 1} z^n \, \sum_{j=0}^n \ch 
(\psi_{n,r,j}^+) (\bx) t^j & = & 
\frac{H(\bx; tz)^{r-1} - tH(\bx; z)^{r-1}} 
     {H(\bx; tz)^r - tH(\bx; z)^r}
\\ & & \nonumber \\ 
\sum_{n \ge 1} z^n \, \sum_{j=0}^n \ch 
(\psi_{n,r,j}^{-}) (\bx) t^j & = & 
\frac{t(H(\bx; z)^{r-1} - H(\bx; tz)^{r-1})} 
     {H(\bx; tz)^r - tH(\bx; z)^r}. 
		 \label{eq:genpsi-nr}
\end{eqnarray}
\end{theorem}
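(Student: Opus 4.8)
The plan is to deduce Theorem~\ref{thm:psi-nr} from the equivariant Rees product machinery of Theorem~\ref{thm:Athrees}, exactly paralleling the way Theorem~\ref{thm:Ath14} was proven in \cite{Ath14} but now keeping track of the $\fS_n$-action. First I would identify the correct poset $\pP$: taking $\pP$ to be the face lattice of the simplex $\Sigma_n$ (equivalently, the Boolean lattice $2^{V_n}$ with a maximum adjoined), on which $\fS_n$ acts by permuting $V_n$, one expects that the homology representations $\widetilde{H}_{n-1}((\pP^-\ast T_{r,n})_-;\CC)$ of Equation~(\ref{eq:Ath2}) realize $\sum_j \psi_{n,r,j}t^j$. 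Here the choice $t=r$ in Theorem~\ref{thm:Athrees} is dictated by the fact that the $r$-ary tree $T_{r,n}$ is precisely what encodes the edgewise parameter $r$; indeed the Rees product with $T_{r,n}$ is the standard poset-theoretic model for $r$-fold edgewise subdivision. The splitting $\psi_{n,r}\cong_{\fS_n}\psi_{n,r}^+\oplus\psi_{n,r}^-$ and the $\gamma$-positivity of each summand with the stated centers $n/2$ and $(n+1)/2$ then come for free from the two terms on the right-hand side of~(\ref{eq:Ath2}), with $\gamma^+_{\pP,i}$ and $\gamma^-_{\pP,i}$ the nonvirtual $\gamma$-representations; the vanishing of the constant term of $\sum_j\psi_{n,r,j}^+t^j$ needs a separate small argument (the $i=0$ term in the first sum), which I would handle by checking that the degree-zero part of the relevant homology vanishes, or equivalently by the ${\rm ex}^\ast$-specialization matching $d^+_{n,r}(0)=0$.

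Next I would establish the generating-function identities~(\ref{eq:genpsi+nr}) and~(\ref{eq:genpsi-nr}). The cleanest route is to first prove~(\ref{eq:defpsi-nr}) for $\psi_{n,r}$ itself: apply Equation~(\ref{eq:ste}) (or rather its analogue for the Cohen--Macaulay poset $\pP$, via the standard identification of $\widetilde{H}$ of a Rees product with a local face module / order complex as in \cite{Ath18, BjW05}) and compute the fixed-point subcomplexes. Concretely, if $w\in\fS_n$ has cycle type $\lambda(w)$, then $\Gamma_{n,r}^w\cong\esd_r(\Gamma_n^w)$ by Proposition~\ref{prop:edgeact}(b), and $\Gamma_n^w$ is the barycentric subdivision of a simplex of dimension $\ell(\lambda(w))-1$; feeding the known $h$-polynomial and local $h$-polynomial formulas for barycentric subdivisions and their $r$-fold edgewise subdivisions into~(\ref{eq:ste}) and summing over $w$ yields the ratio $\frac{(1-t)H(\bx;tz)^{r-1}}{H(\bx;tz)^r-tH(\bx;z)^r}$. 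For the $+$ and $-$ pieces I would instead use the poset decomposition: Theorem~\ref{thm:Athrees} gives the two homology pieces, and their Frobenius characteristics are computed by the same fixed-point formula applied to $(\pP^-\ast T_{r,n})_-$ restricted appropriately; alternatively, and more economically, once I know $\psi_{n,r}^+$ is the local face module $L_{V_n}(\Gamma_{n,r})$ I can invoke Proposition~\ref{prop:staH} with $K_n=\Gamma_{n,r}$ (whose restrictions to faces are the $\esd_r$ of barycentric subdivisions, hence of the required self-similar form) together with the already-established fact $\ch\CC(\Gamma_{n,r})$ has generating function $\frac{(1-t)H(\bx;tz)^{r-1}H(\bx;z)}{H(\bx;tz)^r-tH(\bx;z)^r}$ — wait, that last needs the analogue of Proposition~\ref{prop:esdn}'s numerator; in any case dividing by $H(\bx;z)$ as in Proposition~\ref{prop:staH} isolates $\sum\ch(L_{V_n}(\Gamma_{n,r})_j)t^j$, and a short algebraic manipulation of $\frac{1-t}{\cdots}$ against the known $\sum\ch\CC(\Gamma_{n,r})_j t^j$ gives~(\ref{eq:genpsi+nr}); then~(\ref{eq:genpsi-nr}) follows by subtraction from~(\ref{eq:defpsi-nr}).

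The identification $\psi_{n,r}^+\cong_{\fS_n}L_{V_n}(\Gamma_{n,r})$ deserves its own step. On the non-equivariant level this is Proposition~\ref{prop:Anr+}(a) together with $\ell_{V_n}(\Gamma_{n,r},t)=d^+_{n,r}(t)$ from \cite{Ath14}. Equivariantly, I would argue that both graded $\fS_n$-representations have the same Frobenius characteristic: the left side's generating function is computed from Theorem~\ref{thm:Athrees}/(\ref{eq:Ath2}), and the right side's is computed from the fixed-point formula for local face modules (the local analogue of~(\ref{eq:ste}), as in \cite[Section~4]{Sta92} and \cite[Section~5]{Ath18}) applied to $\Gamma_{n,r}$, using $\Gamma_{n,r}^w\cong\esd_r(\Gamma_n^w)$ again. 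Matching these two is essentially a bookkeeping exercise in symmetric-function identities of the type in \cite[Section~4]{Ath18}.

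I expect the main obstacle to be pinning down the precise poset and the precise value of the tree arity so that Theorem~\ref{thm:Athrees} produces exactly the edgewise-subdivision representation $\psi_{n,r}$ — i.e.\ proving carefully that $\widetilde{H}_{n-1}((\pP^-\ast T_{r,n})_-;\CC)$, as a graded $\fS_n$-representation, has Frobenius characteristic $\frac{(1-t)H(\bx;tz)^{r-1}}{H(\bx;tz)^r-tH(\bx;z)^r}$. This requires correctly identifying the Rees-product order complex with (a cone/suspension of) $\Gamma_{n,r}$ or $\Delta(\Gamma_{n,r})$, tracking degree shifts, and verifying the $\fS_n$-equivariance of that identification; the combinatorics of Rees products with $r$-ary trees versus $r$-fold edgewise subdivision is standard but the equivariant matching of gradings is where sign/shift errors are easy to make. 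Everything after that — the splitting, the two $\gamma$-positivities, the centers, the zero constant terms, and the two generating functions — should follow by the ${\rm ex}^\ast$-specialization sanity checks against~(\ref{eq:expdnr}) and~(\ref{eq:dnrsum}) and by routine manipulation.
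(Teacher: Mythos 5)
Your overall architecture (Rees products for the $\gamma$-positivity; Stembridge's fixed-point formula plus Proposition~\ref{prop:staH} for identifying $\psi_{n,r}^+$ with $L_{V_n}(\Gamma_{n,r})$) is the right one, and your second and third paragraphs essentially match the paper's Proposition~\ref{prop:eGamma-nr}. But the central step --- where the $\gamma$-positivity actually comes from --- is set up incorrectly. You propose to apply Theorem~\ref{thm:Athrees} to the ordinary Boolean lattice $2^{V_n}$ with tree arity $t=r$, on the grounds that the Rees product with $T_{r,n}$ models $r$-fold edgewise subdivision. This conflates the two parameters. In Theorem~\ref{thm:Athrees} the tree arity $t$ \emph{is} the formal variable of the polynomial identity in $R(G)[t]$ (the identity is established by evaluating at every positive integer $t$); if you freeze it at $r$ you are left with a single ungraded representation rather than the graded object $\oplus_j\psi_{n,r,j}$, and if you keep $t$ generic the ordinary Boolean lattice only reproduces the $r=1$ representation $\psi_n$ of Proposition~\ref{prop:esdn}. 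The parameter $r$ must enter through the \emph{poset}, not the tree: the paper applies the \emph{first} part of Theorem~\ref{thm:Athrees} (Equation~(\ref{eq:Ath1}), not~(\ref{eq:Ath2})) to the colored Boolean lattice $\bB_{n,r}$ with a maximum adjoined, acted on by $\ZZ_r\wr\fS_n$, computes the image under $\ch_r$ of the resulting homology to be the coefficient of $z^n$ in $(1-t)\prod_{i=1}^{r-1}E(\bx^{(i)};z)\big/\bigl(\prod_{i=0}^{r-1}E(\bx^{(i)};tz)-t\prod_{i=0}^{r-1}E(\bx^{(i)};z)\bigr)$, and only then sets all $\bx^{(i)}$ equal and applies $\omega$ (operations which preserve Schur positivity) to arrive at $(1-t)H(\bx;z)^{r-1}/(H(\bx;tz)^r-tH(\bx;z)^r)$. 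None of this is in your proposal.

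Two further points you would have to supply even with the correct poset. First, the expression just displayed is not yet the generating function~(\ref{eq:defpsi-nr}) of $\psi_{n,r}$ (which has $H(\bx;tz)^{r-1}$ in the numerator); one must perform the substitution $z\mapsto tz$, $t\mapsto 1/t$, which also shifts the center of the second palindromic summand of~(\ref{eq:Ath1}) from $(n-1)/2$ to $(n+1)/2$ --- this is where the stated centers come from, and it is invisible in your write-up. Second, once one has \emph{some} decomposition into $\gamma$-positive pieces with centers $n/2$ and $(n+1)/2$, one must still argue that these pieces coincide with the $\psi^{\pm}_{n,r}$ \emph{defined} by~(\ref{eq:genpsi+nr}) and~(\ref{eq:genpsi-nr}); the paper does this by checking that the right-hand sides of~(\ref{eq:genpsi+nr}) and~(\ref{eq:genpsi-nr}) are invariant, respectively anti-invariant in the appropriate sense, under $z\mapsto tz$, $t\mapsto 1/t$ (hence palindromic with the stated centers) and invoking the uniqueness of the decomposition of a polynomial into palindromic summands with centers $n/2$ and $(n+1)/2$. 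Finally, note that~(\ref{eq:defpsi-nr}) is the \emph{definition} of $\psi_{n,r}$, so there is nothing to ``first prove'' there, and the generating function of $\CC(\Gamma_{n,r})$ carries no factor $(1-t)$: it is $\bigl(H(\bx;z)H(\bx;tz)^{r-1}-tH(\bx;z)^r\bigr)/\bigl(H(\bx;tz)^r-tH(\bx;z)^r\bigr)$, from which dividing by $H(\bx;z)$ as in Proposition~\ref{prop:staH} does give~(\ref{eq:genpsi+nr}), as you say.
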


\begin{remark} \rm
For $r=2$, it was shown in \cite[Proposition~5.2]{Ath18} 
that Equation~(\ref{eq:genpsi+nr}) holds for the graded 
$\fS_n$-representation on the local face module of a 
triangulation of the $(n-1)$-dimensional simplex 
(sometimes referred to as the interval triangulation) 
which is different from $\Gamma_{n,2}$. This should come 
as no surprise, in view of the similarities in the 
combinatorial properties of the two triangulations; see 
\cite[Remark~4.5]{Ath16a}. 
\qed
\end{remark}

We first confirm the statement of 
Theorem~\ref{thm:psi-nr} about $L_{V_n}(\Gamma_{n,r})$ 
using the machinery of \cite{Ste94}, explained in 
Section~\ref{sec:act}.
\begin{proposition} \label{prop:eGamma-nr}
The graded $\fS_n$-representation on the local face 
module $L_{V_n}(\Gamma_{n,r})$, induced by the 
$\fS_n$-action on the triangulation $\Gamma_{n,r}$, 
satisfies Equation~(\ref{eq:genpsi+nr}). 
\end{proposition}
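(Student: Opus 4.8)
The plan is to reduce the computation of the equivariant generating function for the local face modules $L_{V_n}(\Gamma_{n,r})$ to the analogous one for the quotient face rings $\CC(\Gamma_{n,r})$, and then to evaluate the latter by Stembridge's formula~(\ref{eq:ste}). First I would check that the family $K_n := \Gamma_{n,r}$ meets the hypotheses of Proposition~\ref{prop:staH}: each $\Gamma_{n,r}$ is a triangulation of $\Sigma_n$; the restriction of $\Gamma_{n,r}$ to the face $\Sigma_m$ is $\Gamma_{m,r}$, because restriction commutes with edgewise subdivision and $\Gamma_n$ restricts to $\Gamma_m$ on $\Sigma_m$ (provided the linear orders of vertices are chosen compatibly in $n$); and the full automorphism group $\fS_n$ of $\Sigma_n$ acts on $\Gamma_{n,r}$ by Proposition~\ref{prop:edgeact}(a). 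Proposition~\ref{prop:staH} then gives
\[ 1 + \sum_{n \ge 1} z^n \sum_{j=0}^n \ch(\CC(\Gamma_{n,r})_j)(\bx)\, t^j \ = \ H(\bx; z) \left( 1 + \sum_{n \ge 1} z^n \sum_{j=0}^n \ch(L_{V_n}(\Gamma_{n,r})_j)(\bx)\, t^j \right), \]
so it suffices to show that the left-hand side equals $H(\bx;z)\bigl(H(\bx;tz)^{r-1} - tH(\bx;z)^{r-1}\bigr) \big/ \bigl(H(\bx;tz)^r - tH(\bx;z)^r\bigr)$.

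To evaluate the left-hand side I would apply~(\ref{eq:ste}) to $\Delta = \Gamma_{n,r}$, which carries a proper $\fS_n$-action and is Cohen--Macaulay (it triangulates a ball). The essential geometric input is the identification of the fixed subcomplexes. By Proposition~\ref{prop:edgeact}(b), $\Gamma_{n,r}^w \cong \esd_r(\Gamma_n^w)$; and a face of the barycentric subdivision $\Gamma_n$ (a chain of nonempty subsets of $[n]$) is fixed by $w$ precisely when each of its members is a union of cycles of $w$, so $\Gamma_n^w$ is abstractly the barycentric subdivision $\Gamma_{c(w)}$ of a $(c(w)-1)$-simplex, where $c(w)$ is the number of cycles of $w$. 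Hence $\Gamma_{n,r}^w$ has dimension $c(w)-1$ and, since the $h$-polynomial of an edgewise subdivision depends only on the $f$-vector of the subdivided complex, $h(\Gamma_{n,r}^w, t) = h(\Gamma_{c(w),r}, t) = A^+_{c(w),r}(t)$ by Proposition~\ref{prop:Anr+}(a). Substituting into~(\ref{eq:ste}) and summing over $n$ yields
\[ 1 + \sum_{n \ge 1} z^n \sum_{j=0}^n \ch(\CC(\Gamma_{n,r})_j)(\bx)\, t^j \ = \ \sum_{n \ge 0} \frac{z^n}{n!} \sum_{w \in \fS_n} \frac{A^+_{c(w),r}(t)}{(1-t)^{c(w)}} \prod_{i \ge 1} (1 - t^{\lambda_i(w)})\, p_{\lambda_i(w)}(\bx), \]
the $n=0$ term accounting for the leading $1$.

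The remaining step is a generating-function computation. Using Proposition~\ref{prop:Anr+}(b) in the form $A^+_{c,r}(t)/(1-t)^c = \sum_{k \ge 0}\bigl((rk+1)^c - (rk)^c\bigr) t^k$, I would interchange the sums over $k$, $n$ and $w$; for fixed $k$ the weight attached to $w$ is multiplicative over the cycles of $w$, a cycle of length $m$ contributing $a(1 - t^m)\, p_m(\bx)$ with $a = rk+1$ or $a = rk$. The exponential formula for permutations, together with $\sum_{m \ge 1} (1 - t^m)\, p_m(\bx)\, z^m/m = \log\bigl(H(\bx;z)/H(\bx;tz)\bigr)$, then gives $\sum_{n \ge 0} \frac{z^n}{n!} \sum_{w \in \fS_n} a^{c(w)} \prod_{i \ge 1}(1 - t^{\lambda_i(w)})\, p_{\lambda_i(w)}(\bx) = Q^a$, where $Q := H(\bx;z)/H(\bx;tz)$. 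Summing the geometric series $\sum_{k \ge 0} t^k(Q^{rk+1} - Q^{rk}) = (Q-1)/(1 - tQ^r)$ and simplifying $1 + (Q-1)/(1-tQ^r) = Q(1 - tQ^{r-1})/(1 - tQ^r)$ puts the left-hand side into the claimed form, and dividing by $H(\bx;z)$ via Proposition~\ref{prop:staH} produces exactly~(\ref{eq:genpsi+nr}). I expect the structural identification $h(\Gamma_{n,r}^w, t) = A^+_{c(w),r}(t)$ to be the key point, and the main risk of error to lie in the bookkeeping of this last step — isolating the $n=0$ term correctly and justifying the interchange of summation when the per-cycle weight $a \in \{rk, rk+1\}$ depends on the outer index $k$.
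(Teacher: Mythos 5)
Your proposal is correct and follows essentially the same route as the paper's proof: reduction to $\CC(\Gamma_{n,r})$ via Proposition~\ref{prop:staH}, application of Stembridge's formula~(\ref{eq:ste}) with the identification $\Gamma_{n,r}^w \cong \Gamma_{c(w),r}$ and Proposition~\ref{prop:Anr+}, and the same geometric-series computation in $Q = H(\bx;z)/H(\bx;tz)$. The only cosmetic difference is that you derive the identity $\sum_{n}\frac{z^n}{n!}\sum_{w} a^{c(w)}\prod_i (1-t^{\lambda_i(w)})p_{\lambda_i(w)}(\bx) = Q^a$ from the exponential formula, whereas the paper cites it from Stembridge.
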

\begin{proof} 
Since, for any fixed $r$, Proposition~\ref{prop:staH} 
applies to the family of triangulations $\Gamma_{n,r}$, 
it suffices to show that
\begin{equation} \label{eq:eGnr}
  1 \, + \, \sum_{n \ge 1} z^n \, \sum_{j=0}^n 
  \ch (\CC(\Gamma_{n,r})_j) (\bx) t^j \ = \ 
  \frac{H(\bx; z) H(\bx; tz)^{r-1} - tH(\bx; z)^r}
	{H(\bx; tz)^r - tH(\bx; z)^r}.
\end{equation}
This can be achieved by a computation similar to those 
in \cite[Section~5]{Ath18} \cite[Section~6]{Ste94} as 
follows. We will apply Equation~(\ref{eq:ste}) to the 
proper $\fS_n$-action on $\Delta := \Gamma_{n,r}$. Since, 
as the reader can verify, $\Gamma^w$ is combinatorially 
isomorphic to $\Gamma_{c(w)}$, where $c(w)$ is the number 
of cycles of $w \in \fS_n$, part (b) of 
Proposition~\ref{prop:edgeact} shows that 
$\Gamma_{n,r}^w$ is combinatorially isomorphic to 
$\Gamma_{c(w),r}$. As a result, and in 
view of Proposition~\ref{prop:Anr+} (a), we get 
\begin{eqnarray} \nonumber
  \sum_{j=0}^n \ch (\CC(\Gamma_{n,r})_j) (\bx) t^j z^n
  & = & \frac{1}{n!} \sum_{w \in \fS_n} 
	\frac{A^+_{c(w),r}(t)}{(1 - t)^{c(w)}} \, 
	\prod_{i \ge 1} \, (1 - t^{\lambda_i(w)}) \, 
	p_{\lambda_i(w)} (\bx) z^{\lambda_i(w)} \\
	& & \nonumber \\ 
	& = & \sum_{\lambda = (\lambda_1, \lambda_2,\dots) \,
  \vdash \, n} m^{-1}_\lambda \, 
  \frac{A^+_{\ell(\lambda),r}(t)}
  {(1 - t)^{\ell(\lambda)}} \, \prod_{i \ge 1} \, 
  (1 - t^{\lambda_i}) \, p_{\lambda_i} (\bx)
  z^{\lambda_i}, \label{eq:eGnr-first}
\end{eqnarray}
where $n!/m_\lambda$ is the cardinality of the 
conjugacy class of $\fS_n$ which corresponds to 
$\lambda \vdash n$ and $\ell(\lambda)$ is the number 
of parts of $\lambda$. By Proposition~\ref{prop:Anr+} 
(b), the expression (\ref{eq:eGnr-first}) may be 
rewritten as
\[ \sum_{k \ge 0} \, t^k 
   \sum_{\lambda = (\lambda_1, \lambda_2,\dots) \,
   \vdash \, n} m^{-1}_\lambda 
   \left( (rk+1)^{\ell(\lambda)} - 
   (rk)^{\ell(\lambda)} \right) \, \prod_{i \ge 1} \, 
   (1 - t^{\lambda_i}) \, p_{\lambda_i} (\bx)
   z^{\lambda_i} . \]
Summing over all $n \ge 1$ and using the identity 
(see the proof of \cite[Proposition~3.3]{Ste92})
\[ \sum_{\lambda = (\lambda_1, \lambda_2,\dots)} 
   m^{-1}_\lambda \, k^{\ell(\lambda)} \, 
	 \prod_{i \ge 1} \, (1 - t^{\lambda_i}) \, 
	 p_{\lambda_i} (\bx) z^{\lambda_i} \ = \ 
	 \frac{H(\bx; z)^k}{H(\bx; tz)^k} \] 
we get
\begin{eqnarray*}
  1 \, + \, \sum_{n \ge 1} z^n \, \sum_{j=0}^n 
  \ch (\CC(\Gamma_{n,r})_j) (\bx) t^j & = & 1 \, + \, 
  \sum_{k \ge 0} \, t^k 
  \left( \frac{H(\bx; z)^{rk+1}}{H(\bx; tz)^{rk+1}} 
  - \frac{H(\bx; z)^{rk}}{H(\bx; tz)^{rk}} \right) 
  \\ & & \\
  & = & 1 \, + \, \left( \frac{H(\bx; z)}{H(\bx; tz)} 
  - 1 \right) \left( 1 - t \, \frac{H(\bx; z)^r}
  {H(\bx; tz)^r} \right)^{-1} \\ & & \\
  & = & \frac{H(\bx; z) H(\bx; tz)^{r-1} - tH(\bx; z)^r} 
        {H(\bx; tz)^r - tH(\bx; z)^r}
\end{eqnarray*}
and the proof of~(\ref{eq:eGnr}) follows.
\end{proof}

To proceed with the proof of Theorem~\ref{thm:psi-nr},
just as in \cite[Section~3]{Ath14}, we need to consider 
the colored analogue $\bB_{n,r}$ of the Boolean lattice 
of subsets of $[n]$. An \emph{$r$-colored} subset of 
$[n]$ is any subset of $[n] \times \ZZ_r$ 
which contains at most one pair $(i, j)$ 
for each $i \in [n]$ (where $j$ is thought of as the 
color assigned to $i$). The set $\bB_{n,r}$ consists of 
all $r$-colored subsets of $[n]$ and is partially ordered 
by inclusion. The group $\ZZ_r \wr \fS_n$ acts on
$r$-colored subsets of $[n]$ by permuting their elements 
and cyclically shifting their colors. This action induces 
an action on the poset $\bB_{n,r}$ by order-preserving 
bijections. We also recall from Section~\ref{sec:pre} the 
notation $\ch_r$ for the Frobenius characteristic for 
$\ZZ_r \wr \fS_n$ and that the image under this map of 
any non-virtual representation of $\ZZ_r \wr \fS_n$ is 
Schur-positive in $\Lambda(\bx^{(0)}) \otimes \Lambda 
(\bx^{(1)}) \otimes \cdots \otimes \Lambda(\bx^{(r-1)})$, 
of total degree $n$. 

\medskip
\noindent
\emph{Proof of Theorem~\ref{thm:psi-nr}.} Our plan is 
to apply the first part of Theorem~\ref{thm:Athrees} 
to the poset $\pP$ obtained from $\bB_{n,r}$ 
by adding a maximum element and the group $G = \ZZ_r 
\wr \fS_n$, which acts on $\pP$ by order-preserving 
bijections. We first claim that 

\[ 1 \, + \, \sum_{n \ge 1} \, \ch_r  \left( 
\widetilde{H}_{n-1} ((\bB_{n,r} \sm \{\varnothing\}) 
\ast T_{t,n-1}; \CC) \right) z^n \ = \ 
\frac{(1-t) \prod_{i=1}^{r-1} E(\bx^{(i)}; z)} 
{\prod_{i=0}^{r-1} E(\bx^{(i)}; tz) - t 
 \prod_{i=0}^{r-1} E(\bx^{(i)}; z)} \]

\medskip
\noindent
and omit the proof, which is entirely similar to that
of the special case $r=2$, treated in 
\cite[Proposition~4.3]{Ath18}, and uses properties of 
$\ch_r$ which are direct generalizations of those of
$\ch_2$ mentioned in \cite[Section~2]{Ath18}. We now 
replace the 
representation on the left-hand side of this formula 
with the right-hand side of the isomorphism~(\ref{eq:Ath1}), 
set all sequences $\bx^{(0)}, \bx^{(1)},\dots,\bx^{(r-1)}$ 
equal to each other and apply the standard involution 
$\omega$. Since these operations on the functions 
$\ch_r (\beta^+_{\pP,i})$ and $\ch_r(\beta^{-}_{\pP,i})$ 
preserve their Schur-positivity, we conclude that 
\begin{eqnarray*} 
\frac{(1-t) H(\bx; z)^{r-1}}{H(\bx; tz)^r - tH(\bx; z)^r} 
& = & 1 \, + \, \sum_{n \ge 1} z^n \,
\sum_{i=1}^{\lfloor n/2 \rfloor} \xi^+_{n,r,i} (\bx) 
\, t^i (1+t)^{n-2i} \\ & & \\ & & \ \ \, + \,  
\sum_{n \ge 1} z^n \, \sum_{i=0}^{\lfloor (n-1)/2 \rfloor} 
\xi^{-}_{n,r,i} (\bx) \, t^i (1+t)^{n-1-2i} 
\end{eqnarray*}	
for some Schur-positive symmetric functions $\xi^+_{n,r,i} 
(\bx)$ and $\xi^{-}_{n,r,i} (\bx)$. Replacing $z$ with 
$tz$ and $t$ with $1/t$, this equation may be rewritten as
\begin{eqnarray} \label{eq:genxi-nr+-} 
\frac{(1-t) H(\bx; tz)^{r-1}}{H(\bx; tz)^r - tH(\bx; z)^r} 
& = & 1 \, + \, \sum_{n \ge 1} z^n \,
\sum_{i=1}^{\lfloor n/2 \rfloor} \xi^+_{n,r,i} (\bx) 
\, t^i (1+t)^{n-2i} \\ & & \nonumber \\ & & \ \ \, + \,  
\sum_{n \ge 1} z^n \, \sum_{i=0}^{\lfloor (n-1)/2 \rfloor} 
\xi^{-}_{n,r,i} (\bx) \, t^{i+1} (1+t)^{n-1-2i}. \nonumber
\end{eqnarray}	

We define the graded $\fS_n$-representations $\psi_{n,r}^+$ 
and $\psi_{n,r}^{-}$ by Equations~(\ref{eq:genpsi+nr}) 
and~(\ref{eq:genpsi-nr}) and note that (\ref{eq:isopsi-nr}) 
holds, since the right-hand side, say $F(\bx,t; z)$, of 
(\ref{eq:defpsi-nr}) equals the sum of the right-hand 
sides, say $F^+(\bx,t; z)$ and $F^{-}(\bx,t; z)$, of 
(\ref{eq:genpsi+nr}) and~(\ref{eq:genpsi-nr}), 
respectively. 

Finally, we observe that $F^+(\bx,t; z)$ is left invariant
under replacing $z$ with $tz$ and $t$ with $1/t$. This 
implies that the coefficient of $z^n$ in $F^+(\bx,t; z)$ 
is a palindromic polynomial in $t$, centered at $n/2$. 
Similarly, we find that the coefficient of $z^n$ in 
$F^{-}(\bx,t; z)$) is a palindromic polynomial in $t$,
centered at $(n+1)/2$. Since the corresponding 
properties are clear for the coefficient of $z^n$ in the 
two summands in the right-hand side of 
Equation~(\ref{eq:genxi-nr+-}) and because of the 
uniqueness of the decomposition of a polynomial $f(t)$ 
as a sum of two palindromic polynomials with centers 
$n/2$ and $(n+1)/2$, we must have 
\begin{eqnarray*} 
\sum_{j=0}^n \ch (\psi_{n,r,j}^+) (\bx) t^j & = & 
\sum_{i=1}^{\lfloor n/2 \rfloor} \xi^+_{n,r,i} (\bx) 
\, t^i (1+t)^{n-2i} , \\ & & \\ 
\sum_{j=0}^n \ch (\psi_{n,r,j}^{-}) (\bx) t^j & = & 
\sum_{i=0}^{\lfloor (n-1)/2 \rfloor} \xi^{-}_{n,r,i} 
(\bx) \, t^{i+1} (1+t)^{n-1-2i}
\end{eqnarray*}	
and the proof follows. 
\qed

\begin{problem}
Find explicit combinatorial interpretations of the 
coefficients of the expansions of $\xi^+_{n,r,i} (\bx)$ 
and $\xi^{-}_{n,r,i} (\bx)$ as linear combinations of 
Schur functions.
\end{problem}

\section{Equivariant analogues of colored binomial 
         Eulerian polynomials}
\label{sec:gal}

This section studies an $\fS_n$-equivariant analogue 
of $\widetilde{A}_{n,r}(t)$. For $r=2$, it confirms the 
equivariant Gal phenomenon for the $\fS_n$-action on the 
triangulated sphere $\Delta(\Gamma_{n,r})$, considered in 
Section~\ref{sec:be}, and deduces new combinatorial 
interpretations of the $\gamma$-coefficients of 
$\widetilde{A}^+_{n,r} (t)$ and $\widetilde{A}^{-}_{n,r} 
(t)$.

\subsection{Equivariant analogue of 
$\widetilde{A}_{n,r}(t)$}

Following the approach 
of Section~\ref{sec:localgal}, we define the graded 
$\fS_n$-representation $\widetilde{\varphi}_{n,r} = 
\oplus_{j=0}^n \, \widetilde{\varphi}_{n,r,j}$ by the 
formula
\begin{equation} \label{eq:deftphi-nr}
  1 \, + \, \sum_{n \ge 1} z^n \, \sum_{j=0}^n 
  \ch (\widetilde{\varphi}_{n,r,j}) (\bx) t^j \ = \ 
	\frac{(1-t) H(\bx; z) H(\bx; tz)^r} 
	{H(\bx; tz)^r - tH(\bx; z)^r}.
\end{equation}
Applying the specialization ${\rm ex}^\ast$ on both
sides gives
\[ 1 \, + \, \sum_{n \ge 1} \, \frac{z^n}{n!} \, 
\sum_{j=0}^n \dim (\widetilde{\varphi}_{n,r,j}) t^j \ = \ 
\frac{(1-t) e^{(rt+1)z}} {e^{rtz} - te^{rz}} \]
which, in view of Equation~(\ref{eq:binomAnrexpgen}) 
(see the proof of Proposition~\ref{prop:mainC}), shows 
that $\widetilde{\varphi}_{n,r}$ is 
an $\fS_n$-equivariant analogue of 
$\widetilde{A}_{n,r}(t)$. For $r=1$, it reduces to the 
graded $\fS_n$-representation $\widetilde{\varphi}_n$ 
discussed in Section~\ref{sec:intro}.

As in Section~\ref{sec:localgal}, we consider
$\Gamma_{n,r}$ as a triangulation of the simplex $\Sigma_n$
on which $\fS_n$ acts by permuting coordinates. This action 
extends to one on $\Delta(\Gamma_{n,r})$ which, as explained 
in Section~\ref{sec:be}, can be considered as a triangulation
of the boundary complex of the $n$-dimensional 
cross-polytope (note that 
this extended $\fS_n$-action is not proper, since 
$-\Sigma_n$ is a face of $\Delta(\Gamma_{n,r})$ which is 
fixed by the action, although not pointwise). As a result, 
we have a linear $\fS_n$-action on 
$\CC(\Delta(\Gamma_{n,r}))$. The following theorem, in 
view of our discussion in Example~\ref{ex:h}, partially 
generalizes \cite[Theorem~5.2]{SW17+} (the case $r=1$); 
we have no reason to doubt that the $\gamma$-positivity 
statement holds for every $r \ge 2$.

\begin{theorem} \label{thm:tphi-nr}
There exists an isomorphism 
\begin{equation} \label{eq:isotphi-nr} 
\widetilde{\varphi}_{n,r} \, \cong_{\fS_n} \, 
\widetilde{\varphi}_{n,r}^+ \, \oplus \, 
\widetilde{\varphi}_{n,r}^{-} 
\end{equation}
of graded $\fS_n$-representations, where 
$\widetilde{\varphi}_{n,r}^+ = \oplus_{j=0}^n \, 
\widetilde{\varphi}_{n,r,j}^+$ and 
$\widetilde{\varphi}_{n,r}^{-} = \oplus_{j=0}^n \, 
\widetilde{\varphi}_{n,r,j}^{-}$ are such that

\begin{itemize}
\itemsep=0pt

\item
$\sum_{j=0}^n \, \widetilde{\varphi}_{n,r,j}^+ t^j$ is 
palindromic and unimodal, with center $n/2$, and 

\item
$\sum_{j=0}^n \, \widetilde{\varphi}_{n,r,j}^{-} t^j$ 
is palindromic with center $(n+1)/2$ and zero 
constant term. 
\end{itemize}

Moreover, these polynomials are $\gamma$-positive for
$r=2$, we have 

\begin{eqnarray} \label{eq:gentphi+nr}
1 \, + \, \sum_{n \ge 1} z^n \, \sum_{j=0}^n \ch 
(\widetilde{\varphi}_{n,r,j}^+) (\bx) t^j & = & 
\frac{H(\bx; z) H(\bx; tz)^r - tH(\bx; z)^r H(\bx; tz)} 
     {H(\bx; tz)^r - tH(\bx; z)^r}
\\ & & \nonumber \\ 
\sum_{n \ge 1} z^n \, \sum_{j=0}^n \ch 
(\widetilde{\varphi}_{n,r,j}^{-}) (\bx) t^j & = & 
  \frac{t H(\bx; z) H(\bx; tz) 
  (H(\bx; z)^{r-1} - H(\bx; tz)^{r-1})} 
     {H(\bx; tz)^r - tH(\bx; z)^r} 
		 \label{eq:gentphi-nr}
\end{eqnarray}

\medskip
\noindent
for $r \ge 1$ and 
$\widetilde{\varphi}_{n,r}^+$ is 
isomorphic to the graded $\fS_n$-representation 
$\CC(\Delta(\Gamma_{n,r}))$, induced by the 
$\fS_n$-action on $\Delta(\Gamma_{n,r})$.
\end{theorem}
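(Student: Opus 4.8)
The plan is to mirror, at the equivariant level, the chain of identities that proved Theorem~\ref{thm:mainB} combined with the poset-theoretic machinery of Theorem~\ref{thm:Athrees}, exactly as was done for $\psi_{n,r}$ in the proof of Theorem~\ref{thm:psi-nr}. First I would establish the generating function identities \eqref{eq:gentphi+nr} and \eqref{eq:gentphi-nr} and the isomorphism \eqref{eq:isotphi-nr}. For this, observe that multiplying the right-hand side of \eqref{eq:genpsi+nr} by $H(\bx;z)H(\bx;tz)$ gives precisely the right-hand side of \eqref{eq:gentphi+nr}, and similarly multiplying \eqref{eq:genpsi-nr} by $H(\bx;z)H(\bx;tz)$ gives \eqref{eq:gentphi-nr}; adding these and comparing with \eqref{eq:deftphi-nr} (using that the right-hand side of \eqref{eq:defpsi-nr} times $H(\bx;z)H(\bx;tz)$ equals the right-hand side of \eqref{eq:deftphi-nr}) yields \eqref{eq:isotphi-nr} formally, once one knows the pieces are genuine (non-virtual) representations. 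The palindromicity statements follow from the same substitution trick used at the end of the proof of Theorem~\ref{thm:psi-nr}: the function $\widetilde{F}^+(\bx,t;z)$ on the right of \eqref{eq:gentphi+nr} is invariant under $z \mapsto tz$, $t \mapsto 1/t$ after accounting for the degree shift, forcing the coefficient of $z^n$ to be palindromic with center $n/2$, and similarly the coefficient of $z^n$ in $\widetilde{F}^-(\bx,t;z)$ is palindromic with center $(n+1)/2$ and has zero constant term because of the overall factor of $t$.

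Next I would prove the geometric identification $\widetilde{\varphi}_{n,r}^+ \cong_{\fS_n} \CC(\Delta(\Gamma_{n,r}))$. The tool is Proposition~\ref{prop:staH}, which relates the equivariant $h$-polynomial of a family of triangulations of simplices to the equivariant local $h$-polynomials of the same family; but here we need the analogue for the sphere $\Delta(\Gamma_{n,r})$. The cleanest route is to combine the (equivariant refinement of the) identity \eqref{eq:S(G)hpoly} from Proposition~\ref{prop:S(G)}(d) with the equivariant local $h$-polynomial computation. Concretely, since $\Gamma_{n,r}^w \cong \Gamma_{c(w),r}$ (as used in the proof of Proposition~\ref{prop:eGamma-nr}) and since $\Delta(\Gamma_{n,r})^w$ should be combinatorially $\Delta(\Gamma_{c(w),r})$ up to the standard analysis of fixed subcomplexes under the $\fS_n$-action, one applies Stembridge's formula \eqref{eq:ste}-style reasoning: $\sum_j \ch(\CC(\Delta(\Gamma_{n,r}))_j)(\bx) t^j z^n$ expands as a sum over partitions of $A^+_{c,r}$-type contributions multiplied by the extra $h$-polynomial factor coming from the $U$-vertices. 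Using Corollary~\ref{cor:mainB-hpoly}, which identifies $h(\Delta(\Gamma_{n,r}),t) = \widetilde{A}^+_{n,r}(t) = \sum_{k} \binom{n}{k} t^{n-k} A^+_{k,r}(t)$, together with the computation of $\CC(\Gamma_{n,r})$ done in \eqref{eq:eGnr}, one can run the same manipulation as in the proof of Proposition~\ref{prop:eGamma-nr}, now multiplying by the extra $H(\bx;z)$ coming from the cross-polytope join structure, and arrive at \eqref{eq:gentphi+nr}. Alternatively — and this is probably cleaner — I would note that $\Delta(\Gamma_{n,r})$ is built from $\Gamma_{n,r}$ by joining with faces of $2^U$, the $U$-part carries a trivial $\fS_n$-action compatible with the permutation action on $V$, and so $\CC(\Delta(\Gamma_{n,r})) \cong \bigoplus_F \CC(\text{join of }\Gamma_{F,r}\text{-restriction with }2^{U\setminus F})$, whose equivariant $h$-polynomial is the product of $t^{n-|F|}$ with the equivariant local-to-global data, summed against $H(\bx;z)$. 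This is exactly the equivariant lift of \eqref{eq:S(G)hpoly}.

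For the $\gamma$-positivity when $r=2$, I would apply the first part of Theorem~\ref{thm:Athrees} — just as in the proof of Theorem~\ref{thm:psi-nr} — but now to a poset $\pP$ one rank higher, or rather to $\bB_{n,r}$ augmented with \emph{both} a new minimum and a new maximum, so that the Rees-product homology generating function acquires the extra $H(\bx;tz)$ (equivalently $E(\bx^{(0)};z)$ before applying $\omega$) factor that distinguishes \eqref{eq:deftphi-nr} from \eqref{eq:defpsi-nr}. Concretely, the generating function for $\ch_r$ of the relevant Rees-product homology should be $\frac{(1-t)E(\bx^{(0)};z)\prod_{i=1}^{r-1}E(\bx^{(i)};z)}{\prod_i E(\bx^{(i)};tz) - t\prod_i E(\bx^{(i)};z)}$; once this is in place, one substitutes the $\gamma$-decomposition \eqref{eq:Ath1}, sets all color-alphabets equal, applies $\omega$, and extracts the two palindromic pieces by uniqueness of the palindromic decomposition, obtaining Schur-positive $\gamma$-coefficients — hence equivariant $\gamma$-positivity for $r=2$. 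The main obstacle, and the step I would spend the most care on, is verifying the $r=2$ Rees-product identity with the extra factor: it requires identifying the correct poset (most plausibly $\bB_{n,2}$ with a new top and new bottom, i.e. working with $(\pP^- \ast T_{t,n})_-$-type constructions as in \eqref{eq:Ath2}, or symmetrically with \eqref{eq:Ath1} applied to a doubly-augmented poset) and checking that its homology generating function really produces the $H(\bx;z)H(\bx;tz)^r$ numerator rather than $H(\bx;tz)^{r-1}$. Everything downstream — the palindromicity, the isomorphism \eqref{eq:isotphi-nr}, the specialization consistency check, and the geometric identification — is a routine transcription of the arguments already carried out for $\psi_{n,r}$ and for the $r=1$ case in \cite{SW17+}.
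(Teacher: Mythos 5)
Your overall strategy matches the paper's on all three fronts: define $\widetilde{\varphi}^{\pm}_{n,r}$ by the generating functions (\ref{eq:gentphi+nr})--(\ref{eq:gentphi-nr}), obtain the isomorphism (\ref{eq:isotphi-nr}) and the palindromicity claims from the $z\mapsto tz$, $t\mapsto 1/t$ substitution, identify $\widetilde{\varphi}^+_{n,r}$ with $\CC(\Delta(\Gamma_{n,r}))$ via an equivariant lift of Proposition~\ref{prop:S(G)}(d), and deduce $\gamma$-positivity for $r=2$ from Theorem~\ref{thm:Athrees} applied to an augmentation of $\bB_{n,2}$. On the last point the paper uses the \emph{second} part, Equation~(\ref{eq:Ath2}), with $\pP$ equal to $\bB_{n,2}$ plus a maximum (one of the options you list), and resolves the step you rightly flag as the crux by quoting \cite[Proposition~4.6]{Ath18}, which yields (\ref{eq:ereesfn}); note that the candidate generating function you display is missing the factor $\prod_{i=0}^{r-1}E(\bx^{(i)};tz)$ in the numerator, although you correctly name the target $H(\bx;z)H(\bx;tz)^r$ afterwards.

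Two genuine problems remain. First, your primary route to the geometric identification --- running Stembridge's formula (\ref{eq:ste}) on $\Delta(\Gamma_{n,r})$ with an analysis of $\Delta(\Gamma_{n,r})^w$ --- cannot work: the extended $\fS_n$-action on $\Delta(\Gamma_{n,r})$ is \emph{not proper} (the face $U$ is fixed setwise but not pointwise), so the fixed faces do not form the induced subcomplex that (\ref{eq:ste}) presupposes. The paper flags exactly this obstruction and substitutes Proposition~\ref{prop:athaH}, proved by a shelling-based direct-sum decomposition over the faces $F$ of $\Sigma_n$; your ``alternative'' route is morally this proposition, but it must actually be proved (shellability of $\Gamma_{n,r}$, via Proposition~\ref{prop:S(G)}(c), enters here), and the multiplier is $H(\bx;tz)$, not $H(\bx;z)$: the codimension shift $t^{n-|F|}$ pairs with $h_{n-|F|}(\bx)z^{n-|F|}$ to produce $H(\bx;tz)$, and indeed (\ref{eq:eGnr}) times $H(\bx;tz)$ equals the right-hand side of (\ref{eq:gentphi+nr}), whereas times $H(\bx;z)$ it does not. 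Second, you never address the asserted equivariant \emph{unimodality} of $\sum_{j}\widetilde{\varphi}^+_{n,r,j}t^j$ for general $r$; in the paper this is precisely what the geometric identification buys, via the hard Lefschetz argument of Stanley \cite[p.~528]{Sta89} applied to the regular sphere triangulation $\Delta(\Gamma_{n,r})$ (for $r=2$ it would follow from $\gamma$-positivity, but the theorem claims it for all $r$).
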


The proof of Theorem~\ref{thm:tphi-nr} follows a similar 
path with that of Theorem~\ref{thm:psi-nr}. One essential
difference is that the $\fS_n$-action on 
$\Delta(\Gamma_{n,r})$ is not proper and hence one cannot 
hope to adapt the proof of Proposition~\ref{prop:eGamma-nr}. 
The following proposition, whose proof we postpone for a 
while, will be used instead.

\begin{proposition} \label{prop:athaH}
Let $(K_n)$ be a sequence of triangulations, as in 
Proposition~\ref{prop:staH}. Assuming that each $K_n$ is 
shellable, we have

\[ 1 \, + \, \sum_{n \ge 1} z^n \, \sum_{j=0}^n \ch 
   (\CC(\Delta(K_n))_j) (\bx) t^j \ = \ H(\bx; tz) \left( 
   1 \, + \, \sum_{n \ge 1} z^n \, \sum_{j=0}^n \ch 
	 (\CC(K_n)_j) (\bx) t^j \right). \]
\end{proposition}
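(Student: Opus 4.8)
The plan is to exploit the defining equation~(\ref{eq:S(G)hpoly}) of Proposition~\ref{prop:S(G)}~(d) in its equivariant form. First I would observe that, under the hypotheses, the construction $\Delta \mapsto \Delta(\Delta)$ is compatible with the whole tower of triangulations: since $K_m$ is the restriction of $K_n$ to $\Sigma_m$, the restriction $(K_n)_F$ of $K_n$ to a face $F$ of $\Sigma_n$ with $|F|=m$ is $\fS_F$-equivariantly isomorphic to $K_m$, and the building blocks $E \cup G$ in the definition of $\Delta(K_n)$ respect the ambient $\fS_n$-action (a permutation $w$ sends the pair $(E,G)$ attached to $F$ to the pair attached to $w(F)$). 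So the combinatorial decomposition of $\Delta(K_n)$ by faces of $\Sigma_n$ is $\fS_n$-stable, with the stabilizer of $F$ acting on the corresponding stratum.

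Next I would upgrade Equation~(\ref{eq:S(G)hpoly}) to the level of representations. The point of invoking Proposition~\ref{prop:athaH} rather than Proposition~\ref{prop:staH} is precisely that the $\fS_n$-action on $\Delta(K_n)$ is \emph{not} proper (the antipodal simplex $-\Sigma_n$ is fixed setwise but not pointwise), so Stembridge's fixed-subcomplex formula~(\ref{eq:ste}) does not apply; instead one works with a shelling. Using shellability of each $K_n$ and part~(c) of Proposition~\ref{prop:S(G)}, the complex $\Delta(K_n)$ is shellable, and its face ring quotient $\CC(\Delta(K_n))$ has a basis indexed by the shelling restriction sets. Here I would follow the method of \cite[Section~5]{Ath18}: choose the shelling of $\Delta(K_n)$ constructed in the proof of Proposition~\ref{prop:S(G)}~(c), which is organized into $2^n$ blocks indexed by the facets of $\Delta(2^V)$, i.e.\ by subsets $I\subseteq[n]$ with $F=[n]\sm I$; within the block for $I$, a facet of $\Delta(K_n)$ is a join of a facet of $(K_n)_F$ with the simplex on $U_I=\{u_i:i\in I\}$, and its restriction set decomposes accordingly. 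Tracking the $t$-degree (which counts restriction-set size) and the $\fS_n$-action, the generating function for $\CC(\Delta(K_n))$ factors as a sum over $F\subseteq V$ of $t^{|I|}=t^{n-|F|}$ times the contribution of $\CC(K_{|F|})$ restricted to $\fS_F$, induced up to $\fS_n$. Passing to Frobenius characteristics, the induction product over the two parts of the join becomes a product of symmetric functions, and summing the $t^{n-|F|}$-weighted contributions of the ``$U$'' part over all $n$ produces exactly $H(\bx;tz)$ (the complete homogeneous generating series with the variable $t$ carried by the degree), while the remaining factor is the generating function for the $\CC(K_n)$'s. This gives the claimed identity.

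The main obstacle is the bookkeeping in the previous step: one must verify carefully that the chosen shelling of $\Delta(K_n)$ has the property that each facet's restriction set splits as a disjoint union of a subset of $U_I$ and a subset coming from the restriction set of the corresponding facet of $(K_n)_F$ in \emph{some} shelling of $(K_n)_F$, and that the $t$-grading is additive under this split — i.e.\ that the shelling of $\Delta(K_n)$ can be chosen to be ``compatible'' with shellings of all the restrictions $(K_n)_F$ simultaneously, as in the construction already given in the proof of Proposition~\ref{prop:S(G)}~(c). Granting that, the equivariance is essentially formal, since $\fS_n$ permutes the blocks according to its action on subsets of $[n]$ and acts within each block through $\fS_F\times\fS_I$. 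This is exactly the kind of argument carried out in \cite[Section~5]{Ath18}, so I would present it by citing that source for the shelling-to-face-ring dictionary and emphasizing only the two new inputs here: the block structure of the shelling from Proposition~\ref{prop:S(G)}~(c), and the identification of the $U$-part generating function with $H(\bx;tz)$.
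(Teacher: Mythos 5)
Your proposal is correct and follows essentially the same route as the paper: both use the block-structured shelling from Proposition~\ref{prop:S(G)}~(c), the resulting monomial basis of $\CC(\Delta(K_n))$ grouped by faces $F$ of $\Sigma_n$, the identification of each block with $\CC((K_n)_F)$ shifted in degree by $n-|F|$, and induction from $\fS_{|F|}\times\fS_{n-|F|}$ (with the $U$-part acting trivially) to produce the factor $H(\bx;tz)$ via Frobenius characteristic. The ``bookkeeping'' you flag is handled in the paper by showing that the projection $\CC(\Delta(K_n))\to\CC((K_n)_F)$ restricts to an isomorphism on each block, which is the same degree-shift computation you describe via restriction sets.
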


\bigskip
\noindent
\emph{Proof of Theorem~\ref{thm:tphi-nr}.} 
We define the graded $\fS_n$-representations 
$\widetilde{\varphi}_{n,r}^+$ and 
$\widetilde{\varphi}_{n,r}^{-}$ by 
Equations~(\ref{eq:gentphi+nr}) 
and~(\ref{eq:gentphi-nr}). As in the proof of 
Theorem~\ref{thm:psi-nr} we find that (\ref{eq:isotphi-nr}) 
holds and that the polynomials $\sum_{j=0}^n \, 
\widetilde{\varphi}_{n,r,j}^+ t^j$ and $\sum_{j=0}^n \, 
\widetilde{\varphi}_{n,r,j}^{-} t^j$ are symmetric, with 
center of symmetry $n/2$ and $(n+1)/2$, respectively, 
for every $n$. The last statement of the theorem follows 
from Proposition~\ref{prop:athaH}, applied to the 
triangulation $\Gamma_{n,r}$ (which is regular, and hence 
shellable), and Equation~(\ref{eq:eGnr}) and implies the 
unimodality of $\sum_{j=0}^n \, 
\widetilde{\varphi}_{n,r,j}^+ t^j$ via an application of 
the hard Lefschetz theorem, as pointed out by Stanley on 
\cite[p.~528]{Sta89}. 

To prove the $\gamma$-positivity statement for $r=2$, 
we apply the second part of Theorem~\ref{thm:Athrees} 
to the $(\ZZ_2 \wr \fS_n)$-action on the poset $\pP$ 
obtained from $\bB_{n,2}$ by adding a maximum element. 
The left-hand side of Equation~(\ref{eq:Ath2}) in this 
case has already been computed in 
\cite[Proposition~4.6]{Ath18}; its image under $\ch_2$
was shown to equal the coefficient of $z^n$ in 

\begin{equation} \label{eq:ereesfn}
  \frac{(1-t) E(\by; z) E(\bx; tz) E(\by; tz)} 
  {E(\bx; tz)E(\by; tz) - tE(\bx; z)E(\by; z)}, 
\end{equation}

\medskip
\noindent
where $\bx := \bx^{(0)}$ and $\by := \bx^{(1)}$. Setting 
$\bx = \by$, applying the involution $\omega$ and 
comparing with Equation~(\ref{eq:deftphi-nr}), we get
\[ \sum_{j=0}^n \ch (\widetilde{\varphi}_{n,2,j}) (\bx) 
   t^j \ = \ \sum_{i=0}^{\lfloor n/2 \rfloor} 
   \widetilde{\gamma}^+_{n,2,i} (\bx) \, t^i (1+t)^{n-2i} 
   \ + \sum_{i=1}^{\lfloor (n+1)/2 \rfloor} 
   \widetilde{\gamma}^{-}_{n,2,i} (\bx) \, t^i 
   (1+t)^{n+1-2i} \]
for some Schur-positive symmetric functions 
$\widetilde{\gamma}^+_{n,2,i} (\bx)$ and 
$\widetilde{\gamma}^{-}_{n,2,i} (\bx)$. Using the 
uniqueness of the palindromic decomposition 
(\ref{eq:isotphi-nr}), as in the proof of 
Theorem~\ref{thm:psi-nr} we conclude
that 
\begin{eqnarray*} 
\sum_{j=0}^n \ch (\widetilde{\varphi}_{n,2,j}^+) (\bx) 
t^j & = & \sum_{i=0}^{\lfloor n/2 \rfloor} 
\widetilde{\gamma}^+_{n,2,i} (\bx) \, t^i (1+t)^{n-2i} , 
\\ & & \\ 
\sum_{j=0}^n \ch (\widetilde{\varphi}_{n,2,j}^{-}) (\bx) 
t^j & = & \sum_{i=1}^{\lfloor (n+1)/2 \rfloor} 
\widetilde{\gamma}^{-}_{n,2,i} (\bx) \, t^i 
(1+t)^{n+1-2i} 
\end{eqnarray*}	
and the proof follows. 
\qed

\begin{problem}
Find explicit combinatorial interpretations for the 
coefficients of the expansions of 
$\widetilde{\gamma}^+_{n,2,i} (\bx)$ and 
$\widetilde{\gamma}^{-}_{n,2,i} (\bx)$ as linear
combinations of Schur functions.
\end{problem}

\begin{remark} \label{rem:r>2} \rm
One may try to extend this proof to any $r \ge 2$ by 
considering the $(\ZZ_r \wr \fS_n)$-action on $\bB_{n,r}$,
just as in the proof of Theorem~\ref{thm:psi-nr}. However,
the image of the left-hand side of Equation~(\ref{eq:Ath2}) 
under $\ch_r$ turns out to be the coefficient of $z^n$ in 

\[ \frac{(1-t) \prod_{i=1}^{r-1} E(\bx^{(i)}; z)
   \prod_{i=0}^{r-1} E(\bx^{(i)}; tz)}  
   {\prod_{i=0}^{r-1} E(\bx^{(i)}; tz) - t 
   \prod_{i=0}^{r-1} E(\bx^{(i)}; z)}. \]

\medskip
\noindent
After specializing and applying $\omega$ this expression
becomes 

\[ \frac{(1-t) H(\bx; z)^{r-1} H(\bx; tz)^r} 
	   {H(\bx; tz)^r - tH(\bx; z)^r}, \]

\medskip
\noindent
which equals the right-hand side of 
Equation~(\ref{eq:deftphi-nr}) only for $r=2$. 
\qed
\end{remark}

For the proof of Proposition~\ref{prop:athaH}, which
is somewhat technical, we assume familiarity with the 
notion of shellability and its connections to 
Stanley--Reisner theory, as explained in 
\cite[Section~III.2]{StaCCA}. The assumption that each
$K_n$ is shellable simplifies the proof, but we make 
no claim that it is necessary for the validity of the 
proposition. 

\medskip
\noindent
\emph{Proof of Proposition~\ref{prop:athaH}.} 
As was the case with $\Delta(\Gamma_{n,r})$, we consider 
$\Delta(K_n)$ as a triangulation of the boundary complex
$\Delta(\Sigma_n)$ of the $n$-dimensional cross-polytope,
defined as the convex hull of the set of unit coordinate
vectors in $\RR^n$ and their negatives. 

Part (c) of Proposition~\ref{prop:S(G)} applies to $K_n$
and hence, we may choose a shelling order of the set of  
facets of $\Delta(K_n)$ as in the proof given there. 
As described in \cite[Theorem~2.5]{StaCCA}, this
shelling order gives rise to a basis $\bB$ of the 
$\CC$-vector space $\CC(\Delta(K_n))$ which consists
of monomials, one for every facet of $\Delta(K_n)$.
We recall that the faces of $\Sigma_n$ are in 
one-to-one correspondence with the facets of 
$\Delta(\Sigma_n)$. For a face $F$ of $\Sigma_n$, we 
will denote by $\bB_F$ the set of those basis elements 
which are associated to the facets of the restriction 
of $\Delta(K_n)$ to the facet of $\Delta(\Sigma_n)$ 
corresponding to $F$ and by $W_F$ the $\CC$-linear span 
of $\bB_F$ in $\CC(\Delta(K_n))$. Then, each $W_F$ is a 
(standard) graded $\CC$-vector space and we have a decomposition
\begin{equation} \label{eq:direct}
  \CC(\Delta(K_n)) \ = \ \bigoplus_F \, W_F \ = \ 
  \bigoplus_{m=0}^n \, W_{n,m} 
\end{equation}
of graded $\CC$-vector spaces, where $F$ ranges over 
all faces of $\Sigma_n$ and $W_{n,m}$ is the direct sum 
of those spaces $W_F$ for which $F$ has codimension $m$.

We claim that $W_F$ is isomorphic to $\CC((K_n)_F)$ as 
a graded $\CC$-vector space, provided the grading of the
latter is shifted by $m$. To verify the claim, and for 
the ease of notation, we let $F = \Sigma_n$ (the general 
case being similar). Consider the linear system of 
parameters (\ref{eq:lsop}) for $\CC[\Delta(K_n)]$. Since
setting all variables $x_v$ associated to vertices $v$ 
not in $K_n$ equal to zero produces the corresponding 
system of parameters for $\CC[K_n]$, the natural 
projection map $\CC[\Delta(K_n)] \to \CC[K_n]$ induces
a well defined linear surjection $\pi: \CC(\Delta(K_n)) 
\to \CC(K_n)$. Since, by \cite[Theorem~2.5]{StaCCA}, the
images of the elements of the basis $\bB_{\Sigma_n}$ form 
a basis of $\CC(K_n)$, the map $\pi$ restricts to an 
isomorphism $W_{\Sigma_n} \to \CC(K_n)$ and the proof of
the claim follows. Next, we observe that $\fS_n$ acts 
(transitively) on the set of faces of $\Sigma_n$ of given 
codimension $m$. By our assumptions, $\fS_n$ also acts on 
the set of restrictions of $K_n$ to these faces and hence 
on the set of restrictions of $\Delta(K_n)$ to the 
corresponding facets of 
$\Delta(\Sigma_n)$, mapping shelling orders to shelling 
orders and hence bases $\bB_F$ to other bases (not 
necessarily the ones we have chosen) of the spaces $W_F$. 
As a result, each subspace $W_{n,m}$ of $\CC(\Delta(K_n))$
is $\fS_n$-invariant and  
\[ W_{n,m} \ \cong_{\fS_n} \ \bigoplus_F \, \CC((K_n)_F) \]
where, essentially by the definition of induction, the 
direct sum on the right is isomorphic to the 
$\fS_n$-representation which is induced from the 
$\fS_m$-representation $\CC(K_m)$. By standard properties 
of Frobenius characteristic, we infer that
\[ \ch ((W_{n,m})_j) (\bx) \ = \ h_{m-n}(\bx) \cdot \ch 
   (\CC(K_m)_j) (\bx) \]
for $0 \le j \le m$. By Equation~(\ref{eq:direct}) and 
our previous discussion, we also have 
\[ \ch (\CC(\Delta(K_n))_j) (\bx) \ = \ \sum_{m=n-j}^n 
   \ch ((W_{n,m})_{j-n+m}) (\bx) \]
and hence 
\[ \ch (\CC(\Delta(K_n))_j) (\bx) \ = \ \sum_{m=n-j}^n 
   h_{m-n}(\bx) \cdot \ch (\CC(K_m)_{j-n+m}) (\bx) \]
for $0 \le j \le n$, which is equivalent to the proposed
formula.
\qed

\begin{example} \rm
Let $K_n$ be the trivial triangulation of $\Sigma_n$.
Then, $\Delta(\Sigma_n)$ is the boundary complex of the
$n$-dimensional cross-polytope and 
Proposition~\ref{prop:athaH} implies that 
\[ 1 \, + \, \sum_{n \ge 1} z^n \, \sum_{j=0}^n \ch 
   (\sigma_{n,j}) (\bx) t^j \ = \ H(\bx; z) H(\bx; tz), \]
where $\sigma_n = \oplus_{j=0}^n \, \sigma_{n,j}$ is the 
graded $\fS_n$-representation on $\CC(\Delta(\Sigma_n))$
induced by the $\fS_n$-action on $\Delta(\Sigma_n)$.
Equivalently, $\sigma_{n,j}$ is isomorphic to the 
permutation $\fS_n$-representation defined by the standard
$\fS_n$-action on the set of $j$-element subsets of $[n]$ 
(a fact which can be shown directly too). The graded 
representation $\sigma_n$ is an $\fS_n$-equivariant 
analogue of $h(\Delta(\Sigma_n),t) = (1+t)^n$ but clearly,
the polynomial $\sum_{j=0}^n \sigma_{n,j} t^j$ cannot be 
$\gamma$-positive for $n \ge 2$. Hence, the equivariant 
Gal phenomenon fails for this action. A similar 
example was given in \cite[Section~5]{SW17+}. \qed
\end{example}

\subsection{An application}

A combinatorial interpretation of the coefficients in 
the $\gamma$-expansion of $\widetilde{B}^+_n (t)$ and 
$\widetilde{B}^{-}_n (t)$ which is similar to the one 
provided for $\widetilde{A}_n (t)$ in \cite{SW17+} 
and to others known for $A_n(t)$ (see, for instance, 
\cite[Theorem~2.1]{Ath17}) can be deduced as follows. 
For a signed permutation 
$w \in \ZZ_2 \wr \fS_n$, we denote by $\Des_B(w)$ the
set of indices $i \in [n]$ for which either $w(i)$ is 
positive and $w(i) > w(i+1)$, or else both $w(i)$ and 
$w(i+1)$ are negative and $|w(i)| > |w(i+1)|$, where 
$w(n+1) := 0$. The set $\Asc_B(w)$ is defined as the 
complement of $\Des_B(w)$ in $[n]$.

\begin{proposition} \label{prop:mainC} 
For every positive integer $n$,
\begin{eqnarray}
\widetilde{B}^+_n (t) & = &  
\sum_{i=0}^{\lfloor n/2 \rfloor} 
\widetilde{\gamma}^+_{n,2,i} \, t^i (1+t)^{n-2i} 
\label{eq:binomAn2+gamma} \\
& & \nonumber \\ 
\widetilde{B}^-_n (t) & = &  
\sum_{i=1}^{\lfloor (n+1)/2 \rfloor} 
\widetilde{\gamma}^-_{n,2,i} \, t^i (1+t)^{n+1-2i}, 
\label{eq:binomAn2-gamma}
\end{eqnarray}
where

\begin{itemize}
\itemsep=0pt
\item[$\bullet$] 
$\widetilde{\gamma}^+_{n,2,i}$ is equal to the 
number of signed permutations $w \in \ZZ_2 \wr 
\fS_n$ for which $\Des_B(w)$ is a set of $i$ 
elements, no two consecutive, and does not 
contain $n$,
 
\item[$\bullet$] 
$\widetilde{\gamma}^-_{n,2,i}$ is equal to the 
number of signed permutations $w \in \ZZ_2 \wr 
\fS_n$ for which $\Des_B(w)$ is a set of $i$ 
elements, no two consecutive, and contains $n$.
\end{itemize}
\end{proposition}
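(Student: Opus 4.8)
The plan is to deduce Proposition~\ref{prop:mainC} from Theorem~\ref{thm:tphi-nr} in the case $r=2$ by applying the specialization ${\rm ex}^\ast$, and then to match the resulting exponential generating functions with ones built directly out of signed permutations. First I would record the generating functions of $\widetilde{B}^+_n(t)$ and $\widetilde{B}^-_n(t)$: applying ${\rm ex}^\ast$ to Equations~(\ref{eq:gentphi+nr}) and~(\ref{eq:gentphi-nr}) with $r=2$, using ${\rm ex}^\ast(H(\bx;z))=e^z$ and ${\rm ex}^\ast(\ch(\varphi)(\bx))=\dim(\varphi)/n!$, and recalling that by Theorem~\ref{thm:tphi-nr} the graded representations $\widetilde{\varphi}^+_{n,2}$ and $\widetilde{\varphi}^-_{n,2}$ are $\fS_n$-equivariant analogues of $\widetilde{B}^+_n(t)$ and $\widetilde{B}^-_n(t)$, one obtains
\[ 1+\sum_{n\ge1}\frac{z^n}{n!}\,\widetilde{B}^+_n(t)=\frac{e^{(2t+1)z}-t\,e^{(t+2)z}}{e^{2tz}-t\,e^{2z}},\qquad \sum_{n\ge1}\frac{z^n}{n!}\,\widetilde{B}^-_n(t)=\frac{t\bigl(e^{(t+2)z}-e^{(2t+1)z}\bigr)}{e^{2tz}-t\,e^{2z}}. \]

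Next I would analyze the signed-permutation side. The key observation is that, with the convention $w(n+1):=0$, the set $\Des_B(w)$ is precisely the ordinary descent set of the word $(w(1),\dots,w(n),0)$ taken with respect to the total order $-1\prec-2\prec\cdots\prec-n\prec0\prec1\prec\cdots\prec n$ (negatives by increasing absolute value, then $0$, then positives). Hence each such $w$ is a concatenation of $\prec$-increasing runs governed by $\Des_B(w)$; when $\Des_B(w)$ has no two consecutive elements, every run has length at least two, with the possible exceptions of the initial run and, precisely when $n\in\Des_B(w)$, the trivial final run consisting of the appended letter $0$. Grouping the signed permutations $w\in\ZZ_r\wr\fS_n$ ($r=2$) whose descent set has no two consecutive elements according to $\Des_B(w)$, the number of $w$ with a prescribed descent set factors over the runs it determines; summing over all admissible descent sets, weighted by $t$ at each descent and by $1+t$ at each position of $[n]$ which is neither a descent nor immediately follows one, and passing to exponential generating functions, one checks — by a computation parallel to the barred-permutation argument for $\widetilde{A}_n(t)$ in~\cite{SW17+} — that the contribution of the $w$ with $n\notin\Des_B(w)$ is exactly the first generating function above and that of the $w$ with $n\in\Des_B(w)$ is exactly the second. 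Expanding the factors $1+t$ and collecting by $|\Des_B(w)|$ rewrites these contributions as $\sum_i\widetilde{\gamma}^+_{n,2,i}\,t^i(1+t)^{n-2i}$ and $\sum_i\widetilde{\gamma}^-_{n,2,i}\,t^i(1+t)^{n+1-2i}$, with $\widetilde{\gamma}^+_{n,2,i}$ and $\widetilde{\gamma}^-_{n,2,i}$ the asserted counts; the discrepancy $n-2i$ versus $n+1-2i$ is caused by the fact that, when $n\in\Des_B(w)$, one fewer position of $[n]$ lies in $\Des_B(w)$ or immediately after it.

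Finally, comparing the two steps gives $\sum_i\widetilde{\gamma}^+_{n,2,i}\,t^i(1+t)^{n-2i}=\widetilde{B}^+_n(t)$ and $\sum_i\widetilde{\gamma}^-_{n,2,i}\,t^i(1+t)^{n+1-2i}=\widetilde{B}^-_n(t)$, i.e.\ Equations~(\ref{eq:binomAn2+gamma}) and~(\ref{eq:binomAn2-gamma}). If the splitting of the combinatorial generating function into its $n\in\Des_B$ and $n\notin\Des_B$ parts is not transparent enough to match the two closed forms separately, one can instead observe that the two parts are palindromic in each degree with centers $n/2$ and $(n+1)/2$, respectively, that the second has zero constant term, and that their sum equals $1+\sum_{n\ge1}\frac{z^n}{n!}\widetilde{B}_n(t)$; the conclusion then follows from Theorem~\ref{thm:mainB} and the uniqueness of the decomposition of a polynomial as a sum of two palindromic polynomials with these centers.

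The hard part will be the middle step: carrying out the run-decomposition (equivalently, a signed analogue of valley-hopping) under this particular, slightly nonstandard type-$B$ descent convention, and in particular handling the boundary position $n$ carefully enough that the two pieces of the resulting exponential generating function reproduce, on the nose, the two closed forms coming from ${\rm ex}^\ast$. Everything else is routine: the specialization in the first step is a direct substitution, and the final identification is a uniqueness argument of the kind already used repeatedly in the paper.
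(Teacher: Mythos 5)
Your overall architecture is sound and runs parallel to the paper's: the ${\rm ex}^\ast$-specializations of (\ref{eq:gentphi+nr}) and (\ref{eq:gentphi-nr}) at $r=2$ are computed correctly, identifying their coefficients with $\widetilde{B}^{\pm}_n(t)$ is legitimate (via the last statement of Theorem~\ref{thm:tphi-nr} together with Corollary~\ref{cor:mainB-hpoly}, or via the uniqueness of the decomposition into palindromic pieces with centers $n/2$ and $(n+1)/2$ --- a small justification you should make explicit), your reformulation of $\Des_B$ as the ordinary descent set of $(w(1),\dots,w(n),0)$ for the order $-1\prec\cdots\prec -n\prec 0\prec 1\prec\cdots\prec n$ is correct, and the uniqueness endgame in your final paragraph is exactly how the paper passes from a single identity for $\widetilde{B}_n(t)$ to the two separate ones.

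The genuine gap is the middle step, which you yourself flag as ``the hard part'': it is not a routine verification but the entire combinatorial content of the proposition, and your sketch of it does not work as stated. The claim that ``the number of $w$ with a prescribed descent set factors over the runs it determines'' is false; only the number of signed permutations with $\Des_B(w)$ \emph{contained in} a prescribed set factors as a multinomial-type count, and passing from such counts to an expansion $\sum_i \widetilde{\gamma}^{\pm}_{n,2,i}\, t^i(1+t)^{n-2i}$ (resp.\ $t^i(1+t)^{n+1-2i}$) indexed by signed permutations with sparse descent sets requires a real argument --- a signed analogue of valley hopping, or quasisymmetric-function machinery --- not bookkeeping of runs weighted by $t$ and $1+t$, and the boundary position $n$ makes this even more delicate. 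This is precisely the point at which the paper does not argue from scratch: it quotes \cite[Corollary~4.7]{Ath18} (proved there via Rees products of posets), sets $\bx=\by$, applies ${\rm ex}^\ast$ so that each $F_{\Des^\ast(w)}(\bx)$ becomes $1/n!$, compares with the exponential generating function of $\widetilde{B}_n(t)$ obtained from Steingr\'imsson's formula, and finishes with the $\Asc_B/\Des_B$ involution and the uniqueness of the palindromic decomposition. To complete your proof you must either carry out the run-decomposition in full (fixing the factorization issue and the treatment of position $n$), or import the needed identity from \cite{Ath18} as the paper does.
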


For the first few values of $n$, the numbers 
$\widetilde{\gamma}^{\pm}_{n,2,i}$ appear in the 
expansions 
\[  \widetilde{B}^+_n (t) \ = \ \begin{cases} 
    1 + t, & \text{if $n=1$} \\
    (1 + t)^2 + 3t, & \text{if $n=2$} \\
    (1 + t)^3 + 16t(1 + t), & \text{if $n=3$} \\
    (1 + t)^4 + 61t(1 + t)^2 + 57t^2, & 
                            \text{if $n=4$} \\
    (1 + t)^5 + 206t(1 + t)^3 + 743t^2(1 + t), 
                              & \text{if $n=5$} 
    \end{cases} \]
and
\[  \widetilde{B}^-_n (t) \ = \ \begin{cases} 
    t, & \text{if $n=1$} \\
    3t(1 + t), & \text{if $n=2$} \\
    7t(1 + t)^2 + 11t^2, & \text{if $n=3$} \\
    15t(1 + t)^3 + 98t^2(1 + t), & 
                           \text{if $n=4$} \\
    31t(1 + t)^4 + 577t^2(1 + t)^2 + 361x^3, 
                            & \text{if $n=5$}. 
    \end{cases} \]

\medskip
For the proof, we need some explicit combinatorial 
formula demonstrating the (nonsymmetric) 
$\gamma$-positivity of the coefficient of $z^n$ in 
(\ref{eq:ereesfn}). Conveniently, such a formula has 
been provided in \cite[Corollary~4.7]{Ath18}. We set 
$\Des^\ast(w) = \Des_B(w) \sm \{n\}$ for $w \in \ZZ_2 
\wr \fS_n$ and refer the reader to 
\cite[Section~2]{Ath18} for the definition of the 
signed quasisymmetric function $F_{\sDes(w)} (\bx, 
\by)$, since we will only need here the fact that 
the latter specializes to 
$F_{\Des^\ast(w)}(\bx)$ when setting $\bx = \by$. By 
adding the two equations considered in 
\cite[Corollary~4.7]{Ath18} and setting $\bx = \by$, 
we get 
\begin{eqnarray} \label{eq:rees}
{\displaystyle \frac{(1-t) E(\bx; z) E(\bx; tz)^2} 
{E(\bx; tz)^2 - tE(\bx; z)^2} \ = \ 1}
& + & {\displaystyle \sum_{n \ge 1} z^n
\sum_{i=0}^{\lfloor n/2 \rfloor} 
\widetilde{\gamma}^+_{n,2,i}(\bx) \, t^i 
(1 + t)^{n-2i}} \nonumber \\ & & \nonumber \\
& + & {\displaystyle \sum_{n \ge 1} z^n
\sum_{i=1}^{\lfloor (n+1)/2 \rfloor} 
\widetilde{\gamma}^{-}_{n,2,i}(\bx) \, t^i 
(1 + t)^{n+1-2i}},
\end{eqnarray}
where
\[ \widetilde{\gamma}^+_{n,2,i}(\bx) \ := \ 
   \sum_{w} F_{\Des^\ast(w)}(\bx), \]
\[ \widetilde{\gamma}^{-}_{n,2,i}(\bx) \ := \ 
   \sum_{w} F_{\Des^\ast(w)}(\bx) \]
and $w \in \ZZ_2 \wr \fS_n$ runs through all signed 
permutations for which $\Asc_B(w^{-1})$ is a set of
$i$ elements, no two consecutive, and does not contain
(respectively, contains) $n$.

\medskip
\noindent
\emph{Proof of Proposition~\ref{prop:mainC}.} 
The defining Equation~(\ref{eq:defbinomAnr}) can be 
rewritten as
\[ \sum_{n \ge 0} \widetilde{A}_{n,r} (t) \, 
\frac{z^n}{n!} \ = \ \left( \, \sum_{n \ge 0} A_{n,r} 
(t) \, \frac{z^n}{n!} \right) \left( \, \sum_{n \ge 0} 
t^n \, \frac{z^n}{n!} \right) \ = \ 
\left( \, \sum_{n \ge 0} A_{n,r} (t) \, 
\frac{z^n}{n!} \right) e^{tz}. \]
From these equalities and \cite[Theorem~20]{Stei94} we 
get
\begin{equation} 
\label{eq:binomAnrexpgen}
\sum_{n \ge 0} \widetilde{A}_{n,r} (t) \, 
\frac{z^n}{n!} \ = \ \frac{(1-t) e^{(rt+1)z}}
    {e^{rtz} - te^{rz}}
\end{equation}
and, in particular,
\begin{equation} 
\label{eq:binomBnexpgen}
\sum_{n \ge 0} \widetilde{B}_n (t) \, 
\frac{z^n}{n!} \ = \ \frac{(1-t) e^{(2t+1)z}}
    {e^{2tz} - te^{2z}}.  
\end{equation}
On the other hand, applying ${\rm ex}^\ast$ to 
(\ref{eq:rees}) gives
\begin{eqnarray*} 
{\displaystyle \frac{(1-t) e^{(2t+1)z}}
    {e^{2tz} - te^{2z}} \ = \ 1}
& + & {\displaystyle \sum_{n \ge 1} \left( 
\, \sum_{i=0}^{\lfloor n/2 \rfloor} 
\widetilde{\gamma}^+_{n,2,i} \, t^i 
(1 + t)^{n-2i} \right) \frac{z^n}{n!}}
\\ & & \\
& + & {\displaystyle \sum_{n \ge 1} \left( 
\, \sum_{i=1}^{\lfloor (n+1)/2 \rfloor} 
\widetilde{\gamma}^{-}_{n,2,i} \, t^i 
(1 + t)^{n+1-2i} \right) \frac{z^n}{n!}},
\end{eqnarray*}
where $\widetilde{\gamma}^+_{n,2,i}$ (respectively,
$\widetilde{\gamma}^{-}_{n,2,i}$) stands for the 
number of signed permutations $w \in \ZZ_2 \wr \fS_n$ 
for which $\Asc_B(w)$ is a set of $i$ elements, no 
two consecutive, and does not contain (respectively, 
contains) $n$. Equating the coefficients of $z^n/n!$ 
on both sides, in view of (\ref{eq:binomBnexpgen}), 
we get 
\[ \widetilde{B}_n (t) \ = \ 
   \sum_{i=0}^{\lfloor n/2 \rfloor} 
   \widetilde{\gamma}^+_{n,2,i} \, t^i (1 + t)^{n-2i} 
   \ \, + \sum_{i=1}^{\lfloor (n+1)/2 \rfloor} 
   \widetilde{\gamma}^{-}_{n,2,i} \, t^i 
   (1 + t)^{n+1-2i}. \]
Since there is an obvious involution on $\ZZ_2 \wr 
\fS_n$ which exchanges the set-valued statistics 
$\Asc_B$ and $\Des_B$, the expression just 
obtained for $\widetilde{B}_n (t)$ is equivalent 
to the statement of the proposition.
\qed

\begin{problem}
Find interpretations of  
$\widetilde{\gamma}^{\pm}_{n,r,i}$ for all $r \ge 2$, 
similar to those provided by 
Proposition~\ref{prop:mainC} in the case $r=2$. 
\end{problem}

\section{Equivariant analogues of colored Eulerian 
         polynomials}
\label{sec:eAnr}

Given the results of the previous two sections, 
it is natural to inquire about the existence of 
a well-behaved $\fS_n$-equivariant analogue of 
$A_{n,r}(t)$. 

Consider the $r$th dilate $P = \{ (x_1, x_2,\dots,x_n) 
\in \RR^n: 0 \le x_i \le r\}$ of the standard 
unit $n$-dimensional cube. Although one can use the 
machinery of Section~\ref{sec:fans} to obtain such 
an analogue, via Steingr\'imsson's interpretation 
\cite[Theorem~32]{Stei94} of $A_{n,r}(t)$ as the 
$h$-polynomial of a unimodular triangulation 
of $P$, it is perhaps more straightforward to 
employ Stapledon's equivariant Ehrhart theory 
\cite{Stap11} instead. The $h^\ast$-polynomial 
of $P$ satisfies 
\[ \sum_{k \ge 0} (rk+1)^n \, t^k \ = \
   \frac{h^\ast(P, t)}{(1-t)^{n+1}} \]
and hence, in view of \cite[Theorem~17]{Stei94}, 
we have $h^\ast(P, t) = A_{n,r}(t)$. The symmetric group
$\fS_n$ acts on $\RR^n$ be permuting coordinates, as 
usual, and leaves $P$ invariant. As explained in 
Section~\ref{sec:stap}, denoting by $\varphi_{n,r,j}$ 
the coefficient of $t^j$ in $\varphi^\ast_P(t)$ we 
have an $\fS_n$-equivariant analogue $\varphi_{n,r} = 
\oplus_{j \ge 0} \, \varphi_{n,r,j}$ of the 
$h^\ast$-polynomial $A_{n,r}(t)$ of $P$. Noting that 
$\chi_{kP}(w) = (rk+1)^{c(w)}$, where $c(w)$ is the 
number of cycles of $w$, Equation~(\ref{eq:fSn-phi*}) 
and the definition of Frobenius characteristic imply 
that 
\[ \sum_{j \ge 0} \ch (\varphi_{n,r,j}) (\bx) t^j
   \ = \ (1-t) \, \frac{1}{n!} \sum_{w \in \fS_n} 
	\sum_{k \ge 0} \, (rk+1)^{c(w)} t^k \, 
	\prod_{i \ge 1} \, (1 - t^{\lambda_i(w)}) \, 
	p_{\lambda_i(w)} (\bx). \]
A computation analogous to the one in the proof of 
Proposition~\ref{prop:eGamma-nr} results in the formula
\begin{equation} \label{eq:defphi-nr}
  1 \, + \, \sum_{n \ge 1} z^n \, \sum_{j \ge 0} 
  \ch (\varphi_{n,r,j}) (\bx) t^j \ = \ 
	\frac{(1-t) H(\bx; z) H(\bx; tz)^{r-1}} 
	{H(\bx; tz)^r - tH(\bx; z)^r}.
\end{equation}

For $r=2$, the right-hand side is left invariant after
replacing $z$ with $tz$ and $t$ with $1/t$ and hence 
$\sum_{j \ge 0} \varphi_{n,2,j} t^j = \sum_{j=0}^n 
\varphi_{n,2,j} t^j$ is palindromic, with center of 
symmetry $n/2$. The following stronger statement can
be derived from results of previous sections.

\begin{proposition} \label{prop:phin2} 
The polynomial $\sum_{j=0}^n \varphi_{n,2,j} t^j$ is 
$\gamma$-positive for every positive integer $n$.
\end{proposition}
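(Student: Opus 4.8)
The plan is to split off from $\varphi_{n,2}$ the summand $\widetilde\varphi^+_{n,2}$, whose $\gamma$-positivity is already available from Theorem~\ref{thm:tphi-nr}, and to treat the remainder by the Rees-product machinery of Section~\ref{sec:act}.

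First I would compare the right-hand side of~(\ref{eq:defphi-nr}) at $r=2$ with those of~(\ref{eq:gentphi+nr}) and~(\ref{eq:genpsi+nr}) at $r=2$ and record the identity of generating functions
\[ 1 + \sum_{n\ge1}z^n\sum_{j=0}^n\ch(\varphi_{n,2,j})(\bx)\,t^j \;=\; \Bigl(1 + \sum_{n\ge1}z^n\sum_{j=0}^n\ch(\widetilde\varphi^+_{n,2,j})(\bx)\,t^j\Bigr) + \mathcal R(\bx,t;z), \]
where
\[ \mathcal R(\bx,t;z) \;=\; \frac{H(\bx;z)\,H(\bx;tz)\bigl((1-t)-H(\bx;tz)+tH(\bx;z)\bigr)}{H(\bx;tz)^2-tH(\bx;z)^2}. \]
Defining the graded $\fS_n$-representation $\mathcal R_n=\bigoplus_j\mathcal R_{n,j}$ by $1+\sum_{n\ge1}z^n\sum_j\ch(\mathcal R_{n,j})(\bx)t^j = 1+\mathcal R(\bx,t;z)$, this gives $\sum_{j}\varphi_{n,2,j}t^j \cong_{\fS_n} \bigl(\sum_j\widetilde\varphi^+_{n,2,j}t^j\bigr)\oplus\bigl(\sum_j\mathcal R_{n,j}t^j\bigr)$. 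One checks, from the palindromicity noted just before the proposition and from Theorem~\ref{thm:tphi-nr}, that the $z^n$-coefficient of $\mathcal R(\bx,t;z)$ is palindromic in $t$ with center $n/2$ and has zero constant term. (Geometrically, rewriting~(\ref{eq:defphi-nr}) at $r=2$ as $H(\bx;z)H(\bx;tz)\cdot\frac{1-t}{H(\bx;tz)^2-tH(\bx;z)^2}$ and using Propositions~\ref{prop:staH} and~\ref{prop:athaH}, one expects $\varphi_{n,2}\cong_{\fS_n}\CC(\Delta(K_n))$ for a suitable shellable $\fS_n$-triangulation $K_n$ of the simplex $\Sigma_n$ with compatible restrictions, $\mathcal R_n$ accounting for the difference between $K_n$ and $\Gamma_{n,2}$.)

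The substantive step is then to prove that $\sum_j\mathcal R_{n,j}t^j$ is $\gamma$-positive, equivalently — by the discussion at the end of Section~\ref{sec:pre} — that the $z^n$-coefficient of $\mathcal R(\bx,t;z)$ is Schur $\gamma$-positive. For this I would imitate the proofs of Theorems~\ref{thm:psi-nr} and~\ref{thm:tphi-nr}: apply Theorem~\ref{thm:Athrees} to a Rees product built from an appropriate bounded Cohen--Macaulay poset of rank $n+1$ carrying a wreath-product action, identify the image under $\ch_2$ of the resulting homology with an explicit rational generating function in two variable sets $\bx^{(0)},\bx^{(1)}$, then specialize $\bx^{(0)}=\bx^{(1)}=\bx$, apply the involution $\omega$, and finally use the substitution $z\mapsto tz$, $t\mapsto 1/t$ together with uniqueness of the decomposition of a polynomial into two palindromic parts to isolate the required palindromic piece, exactly as in the last paragraph of the proof of Theorem~\ref{thm:psi-nr}. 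Granting this, $\sum_j\varphi_{n,2,j}t^j$ is a direct sum of two $\gamma$-positive polynomials with common center $n/2$, hence is $\gamma$-positive.

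The step I expect to be the real obstacle is pinning down the correct poset (equivalently, the triangulation $K_n$) and carrying out the attached $\ch_2$-computation. One should not hope to shortcut this via the immediate relation $H(\bx;z)^{-1}\cdot\bigl(1+\sum z^n\sum_j\ch(\varphi_{n,2,j})t^j\bigr) = 1+\sum z^n\sum_j\ch(\psi_{n,2,j})t^j$ coming from~(\ref{eq:defphi-nr}) and~(\ref{eq:defpsi-nr}): multiplication by $H(\bx;z)$ does not preserve $\gamma$-positivity, and $\psi_{n,2}$ is not even palindromic, so the $\gamma$-positivity of $\mathcal R_n$ is genuinely new input that must be extracted from Theorem~\ref{thm:Athrees}. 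A routine additional check is that the chosen poset is Cohen--Macaulay of rank $n+1$ and that the resulting $\fS_n$-representation $\mathcal R_n$ is non-virtual.
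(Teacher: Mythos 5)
Your argument is not complete: everything is reduced to the $\gamma$-positivity of the remainder $\mathcal R_n$, and for that step you only sketch a program (``apply Theorem~\ref{thm:Athrees} to an appropriate bounded Cohen--Macaulay poset of rank $n+1$'') without exhibiting the poset or the $\ch_2$-computation. This is precisely the part of the machinery that the paper itself cannot push through in general --- see Remark~\ref{rem:r>2}, where the analogous Rees-product computation for $\bB_{n,r}$ produces the \emph{wrong} generating function for $r\neq 2$ --- so the existence of a poset whose homology realizes $\mathcal R(\bx,t;z)$ cannot simply be taken for granted. As it stands, the substantive claim of the proposition has been relocated into an unverified conjecture, and in addition the non-virtuality of $\mathcal R_n$ (needed even to speak of the direct-sum decomposition $\varphi_{n,2}\cong_{\fS_n}\widetilde\varphi^+_{n,2}\oplus\mathcal R_n$) is left unchecked.

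The paper's proof avoids all of this by a multiplicative, rather than additive, decomposition: one writes
\[
\frac{(1-t)\,H(\bx;z)H(\bx;tz)}{H(\bx;tz)^2-tH(\bx;z)^2}
\;=\;
\frac{(1-t)\,H(\bx;z)H(\bx;tz)}{H(\bx;tz)-tH(\bx;z)}
\cdot
\frac{H(\bx;tz)-tH(\bx;z)}{H(\bx;tz)^2-tH(\bx;z)^2},
\]
where the first factor is the Shareshian--Wachs series~(\ref{eq:genTMn}), whose $z^n$-coefficient is Schur $\gamma$-positive with center $n/2$ by~(\ref{eq:tphi-n}), and the second is the series~(\ref{eq:genpsi+nr}) for $\psi^+_{n,2}$, whose $z^k$-coefficient is Schur $\gamma$-positive with center $k/2$ by Theorem~\ref{thm:psi-nr}. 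Since $t^i(1+t)^{k-2i}\cdot t^j(1+t)^{\ell-2j}=t^{i+j}(1+t)^{(k+\ell)-2(i+j)}$ and products of Schur-positive functions are Schur-positive, the $z^n$-coefficient of the product is Schur $\gamma$-positive with center $n/2$, and the proposition follows with no new poset-theoretic input. Your caveat that ``multiplication by $H(\bx;z)$ does not preserve $\gamma$-positivity'' is correct for that particular factor, but it led you to dismiss the multiplicative route altogether and miss the factorization through $\widetilde\varphi_n$ and $\psi^+_{n,2}$, which is where the result actually lives.
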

\begin{proof}
Using Equation~(\ref{eq:tphi-n}), due to Shareshian 
and Wachs~\cite{SW17+}, and Theorem~\ref{thm:psi-nr}
for $r=2$, we find that 

\begin{eqnarray*} 
\frac{(1-t) H(\bx; z) H(\bx; tz)} 
	{H(\bx; tz)^2 - tH(\bx; z)^2}
& = & \frac{(1-t) H(\bx; z) H(\bx; tz)} 
	{H(\bx; tz) - tH(\bx; z)} \cdot 
\frac{H(\bx; tz) - tH(\bx; z)} 
	{H(\bx; tz)^2 - tH(\bx; z)^2} \\ & & \\
& = & \left( 1 \, + \, \sum_{n \ge 1} z^n \, 
      \sum_{i=0}^{\lfloor n/2 \rfloor}    
      \widetilde{\gamma}_{n,i} (\bx) \, 
      t^i (1+t)^{n-2i} \right) \cdot \\ & & \\
&   & \left( 1 \, + \, \sum_{n \ge 1} z^n \, 
      \sum_{i=1}^{\lfloor n/2 \rfloor}    
      \xi^+_{n,2,i} (\bx) \, t^i (1+t)^{n-2i} \right)
\end{eqnarray*}

\medskip
\noindent
for some Schur-positive symmetric functions 
$\widetilde{\gamma}_{n,i} (\bx)$ and $\xi^+_{n,2,i} 
(\bx)$. Computing the coefficient of $z^n$ on the 
right-hand side shows that
\[ \sum_{j=0}^n \ch (\varphi_{n,2,j}) (\bx) t^j \ = \ 
   \sum_{k+\ell=n} \sum_{i, j} \, \widetilde{\gamma}_{k,i} 
   (\bx) \, \xi^+_{\ell,2,j} (\bx) \,  t^{i+j} 
   (1+t)^{n-2i-2j}, \]
where $\widetilde{\gamma}_{n,i}(\bx) = \xi^+_{n,2,j} 
(\bx) := 1$ for $n=i=j=0$. Given that sums and products 
of Schur-positive symmetric functions are Schur-positive, 
this formula implies that the left-hand side is Schur 
$\gamma$-positive for every $n$ and the proof follows. 
\end{proof}

For $r \ge 3$, proceeding as with $\psi_{n,r}$ and 
$\tilde{\varphi}_{n,r}$, we find that there is an 
isomorphism 
\[ \varphi_{n,r} \, \cong_{\fS_n} \, \varphi_{n,r}^+ 
\, \oplus \, \varphi_{n,r}^{-} \]
of graded $\fS_n$-representations, where 
$\varphi_{n,r}^+ = \oplus_{j \ge 0} \, 
\varphi_{n,r,j}^+$ and $\varphi_{n,r}^{-} = 
\oplus_{j \ge 0} \, \varphi_{n,r,j}^{-}$ are such that

\begin{itemize}
\itemsep=0pt

\item
$\sum_{j \ge 0} \, \varphi_{n,r,j}^+ t^j$ is a palindromic
polynomial, with center of symmetry $n/2$, and 

\item
$\sum_{j \ge 0} \, \varphi_{n,r,j}^{-} t^j$ is a
palindromic polynomial, with center of symmetry $(n+1)/2$
and zero constant term
\end{itemize}

\noindent
and
\begin{eqnarray*}
1 \, + \, \sum_{n \ge 1} z^n \, \sum_{j=0}^n \ch 
(\varphi_{n,r,j}^+) (\bx) t^j & = & 
\frac{H(\bx; z) H(\bx; tz) \left( H(\bx; tz)^{r-2} - 
tH(\bx; z)^{r-2} \right)} {H(\bx; tz)^r - tH(\bx; z)^r}
   \\ & & \nonumber \\ 
\sum_{n \ge 1} z^n \, \sum_{j=0}^n \ch 
(\varphi_{n,r,j}^{-}) (\bx) t^j & = & \frac{tH(\bx; z)
H(\bx; tz) \left( H(\bx; z)^{r-2} - H(\bx; tz)^{r-2} 
  \right)} {H(\bx; tz)^r - tH(\bx; z)^r}. 
\end{eqnarray*}

A recent result of Br\"and\'en and Solus 
\cite[Section~3]{BS18} implies that the graded 
dimensions $\sum_{j=0}^n \dim(\varphi_{n,r,j}^+) t^j$ 
and $\sum_{j=0}^n \dim(\varphi_{n,r,j}^{-}) t^j$ of 
$\varphi_{n,r}^+$ and $\varphi_{n,r}^{-}$ are real-rooted, 
hence $\gamma$-positive polynomials for every $n \ge 1$. 
Thus, we expect that the following question  
has an affirmative answer.

\begin{question}
Are $\sum_{j=0}^n \, \varphi_{n,r,j}^+ t^j$ and 
$\sum_{j=0}^n \, \varphi_{n,r,j}^{-} t^j$ 
$\gamma$-positive for all $n \ge 1$ and $r \ge 3$? 
\end{question}

The methods of this paper seem inadequate to address
this question for reasons similar to those explained 
in Remark~\ref{rem:r>2}.

\medskip
\noindent \textbf{Note added in revision}. The 
polynomials $A_{n,r}(t)$, $\widetilde{A}^+_{n,r} (t)$ 
and $\widetilde{A}^{-}_{n,r} (t)$ were shown to be
real-rooted for all values of $n,r$ by Haglund and 
Zhang in~\cite{HZ19}, where additional combinatorial 
interpretations of them appear.

\medskip
\noindent \textbf{Acknowledgments}. The author thanks 
Francisco Santos for providing the short argument in 
the proof of Proposition~\ref{prop:S(G)} (b), the 
anonymous referees for their useful comments on the 
presentation and the content of 
Remark~\ref{rem:referee} and Christina Savvidou for 
helpful discussions.

\end{document}